\documentclass[11pt,letterpaper,reqno]{amsart}
\usepackage[left=30mm,top=40mm,right=30mm,bottom=35mm]{geometry}
\usepackage{amsfonts}
\usepackage{amsmath}
\usepackage{amssymb}
\usepackage{amsthm}
\usepackage{times}
\usepackage{color}
\usepackage{enumerate}
\usepackage[T1]{fontenc}
\usepackage{inputenc}
\usepackage{graphicx}
\usepackage{xcolor}
\usepackage{comment}

\usepackage[textwidth=30mm]{todonotes}

\usepackage{hyperref}
\hypersetup{
 colorlinks=true,
 linkcolor=blue,
 citecolor=blue,
 filecolor=blue,
 urlcolor=blue
}
\usepackage[nameinlink,capitalize,noabbrev]{cleveref}
\usepackage{aliascnt}
\usepackage{mathrsfs}
\usepackage{booktabs}
\usepackage{tikz}
\usepackage{stmaryrd}
\usepackage{float}

\usepackage[normalem]{ulem}

\newtheorem{theorem}{Theorem}[section]
\newtheorem*{theorem*}{Theorem}

\newaliascnt{corollary}{theorem}

\aliascntresetthe{corollary}
\crefname{corollary}{Corollary}{Corollaries}
\Crefname{corollary}{Corollary}{Corollaries}

\newaliascnt{lemma}{theorem}
\newtheorem{lemma}[lemma]{Lemma}
\aliascntresetthe{lemma}
\crefname{lemma}{Lemma}{Lemmas}
\Crefname{lemma}{Lemma}{Lemmas}

\newaliascnt{proposition}{theorem}
\newtheorem{proposition}[proposition]{Proposition}
\aliascntresetthe{proposition}
\crefname{proposition}{Proposition}{Propositions}
\Crefname{proposition}{Proposition}{Propositions}

\newaliascnt{problem}{theorem}

\aliascntresetthe{problem}
\crefname{problem}{Problem}{Problems}
\Crefname{problem}{Problem}{Problems}

\newaliascnt{conjecture}{theorem}

\aliascntresetthe{conjecture}
\crefname{conjecture}{Conjecture}{Conjectures}
\Crefname{conjecture}{Conjecture}{Conjectures}

\theoremstyle{remark}
\newaliascnt{remark}{theorem}
\newtheorem{remark}[remark]{Remark}
\aliascntresetthe{remark}
\crefname{remark}{Remark}{Remarks}
\Crefname{remark}{Remark}{Remarks}

\theoremstyle{definition}
\newaliascnt{definition}{theorem}
\newtheorem{definition}[definition]{Definition}
\aliascntresetthe{definition}
\crefname{definition}{Definition}{Definitions}
\Crefname{definition}{Definition}{Definitions}

\newaliascnt{notation}{theorem}

\aliascntresetthe{notation}
\crefname{notation}{Notation}{Notations}
\Crefname{notation}{Notation}{Notations}

\newaliascnt{example}{theorem}
\newtheorem{example}[example]{Example}
\aliascntresetthe{example}
\crefname{example}{Example}{Examples}
\Crefname{example}{Example}{Examples}

\newaliascnt{strategy}{theorem}

\aliascntresetthe{strategy}
\crefname{strategy}{Strategy}{Strategies}
\Crefname{strategy}{Strategy}{Strategies}

\newaliascnt{algorithm}{theorem}
\newtheorem{algorithm}[algorithm]{Algorithm}
\aliascntresetthe{algorithm}
\crefname{algorithm}{Algorithm}{Algorithms}
\Crefname{algorithm}{Algorithm}{Algorithms}

\def\M{\mathfrak{m}}
\def\XX{\mathbf{x}}

\newcommand{\KK}[0]{\ensuremath{\mathbf{k}}}
\newcommand{\ZZ}[0]{\ensuremath{\mathbb{Z}}}
\newcommand{\GA}[0]{\ensuremath{\mathbb{G}_{\mathrm{a}}}}
\newcommand{\RR}[0]{\ensuremath{\mathbb{R}}}

\newcommand{\spec}[0]{\ensuremath{\operatorname{Spec}}}
\newcommand{\supp}[0]{\ensuremath{\operatorname{supp}}}
\newcommand{\Aut}[0]{\ensuremath{\operatorname{Aut}}}
\newcommand{\Sym}[0]{\ensuremath{\operatorname{Sym}}}
\newcommand{\GL}[0]{\ensuremath{\operatorname{GL}}}
\newcommand{\Der}[0]{\ensuremath{\operatorname{Der}}}
\newcommand{\R}{\mathcal R}
\newcommand{\jac}[0]{\ensuremath{\operatorname{Jac}}}
\newcommand{\divi}[0]{\ensuremath{\operatorname{div}}}

\newcommand{\quot}[1]{{\overline{#1}}}

\begin{document}

\title[Structure of monomial automorphism groups]{The structure of automorphism groups of zero-dimensional monomial algebras}

\author{Roberto D\'iaz}
\address{Departamento de Matem\'aticas, Facultad de Ciencias, Universidad de La Serena, Juan
Cisternas 1200, La Serena, Chile.}%
\email{roberto.diazv1@userena.cl}

\author{Giancarlo Lucchini Arteche} %
\address{Departamento de Matem\'aticas, Facultad de Ciencias, Universidad de Chile, Las Palmeras 3425, \~{N}u\~{n}oa, Santiago, Chile.}
\email{luco@uchile.cl}

\author{Gonzalo Manzano-Flores} %
\address{Instituto de Matem\'aticas, Facultad de Ciencias, Universidad de Valpara\'iso, Gran Breta\~{n}a 1111, Valpara\'iso, Chile.}
\email{gonzalo.manzano@uv.cl}

\date{\today}

\thanks{{\it 2020 Mathematics Subject
 Classification}: 13F55; 13N15; 14L17; 14R10; 14R15.\\
 \mbox{\hspace{11pt}}{\it Key words}: monomial algebras, monomial ideals, automorphism groups, tame automorphisms.}

\begin{abstract}
Let $A$ be a zero-dimensional monomial algebra over an algebraically closed field of
characteristic zero, that is, a finite-dimensional quotient of a polynomial ring by a
monomial ideal. Its automorphism group $G$ is a linear algebraic group, described through the
homogeneous nilpotent derivations of $A$. We analyze the structure of $G$ in
detail. Its identity component $G^0$ is a semidirect product of its unipotent radical and a reductive subgroup isomorphic to a product of general linear groups, and for each root degree we characterize when the associated derivations give rise to an additive root subgroup, and determine its dimension. Using the Lie brackets of these
derivations, we then give an explicit algorithm that produces, out of the minimal monomial generators of the ideal, a family of root subgroups generating $G^0$ together with a maximal torus. Such a family is minimal in the generic case.
We also show that the component group $G/G^0$ can be arbitrary: every finite group
arises as the component group of the automorphism group of some zero-dimensional monomial algebra. Finally, we apply these results to the algebras $\KK[\XX]/\M^d$, showing that the subgroup generated by a maximal torus and the outer root subgroups is exactly the subgroup of automorphisms with constant Jacobian determinant, and we deduce from this a new proof of Anick's theorem on the density of the tame automorphisms of $\KK[\XX]$.
\end{abstract}
\maketitle

%\tableofcontents

\section*{Introduction}
Let $\KK$ be an algebraically closed field of characteristic zero and let
$\KK[\XX]=\KK[x_1,\dots,x_n]$ be a polynomial ring. A monomial ideal
$I\subseteq\KK[\XX]$ is an ideal admitting a generating set consisting of
monomials, and the quotient $A=\KK[\XX]/I$ is called a monomial algebra.
We are interested in the case where $A$ is zero-dimensional, that is,
finite-dimensional as a $\KK$-vector space; combinatorially this means that
only finitely many monomials survive in $A$. The goal of this paper is to describe, as explicitly as possible, the linear algebraic group $G=\Aut_\KK(A)$ of such an algebra.

Our starting point is \cite{DLMR24}, where the structure of $G$ is analyzed through the homogeneous locally nilpotent derivations of $A$ \cite[Theorem~4.21]{DLMR24}. This point of view goes back
to Demazure's study of the Cremona group \cite[Section~2, Th\'eor\`eme~2]{demazure1970sous} and its modern
reformulation for toric varieties through Demazure roots
\cite[Theorem~2.7]{liendo2010affine}, \cite[Thm.~2]{LL21}: every homogeneous locally nilpotent derivation gives rise to a $\GA$-action on $A$, and the identity component
$G^0$ is generated by the maximal torus $T$ together with the associated root
subgroups. The homogeneous derivations of $A$ split into inner ones (whose degree lies in $\ZZ^n_{\geq 0}$) and outer ones (whose degree has a single negative coordinate, equal to $-1$). The present work takes this classification as an
input and studies the structure of $G$ in three directions, one per section below,
before applying the results, in a last section, to recover a theorem of Anick on tame
automorphisms.

Concerning this last application, it is worth mentioning that the generation problem for the automorphism group of the polynomial ring $\KK[\XX]$ has a long history of its own (see \cite[\S5.1]{vdE00}). By theorems of Jung \cite{Jung42} and van der Kulk \cite{Kulk53}, $\Aut_\KK(\KK[x_1,x_2])$ is generated by tame automorphisms, that is, affine and elementary automorphisms, whereas in three variables the Nagata automorphism \cite{Nagata72} is wild, as proved by Shestakov and Umirbaev \cite{SU04}, so that no such description is available in general. Anick's theorem \cite{Anick83} is an approximation statement in this circle of ideas: an endomorphism of $\KK[\XX]$ whose Jacobian determinant is a nonzero constant (expected to be an automorphism by the recently disproved Jacobian conjecture) cannot be told apart from a tame automorphism to any prescribed order. Now, ``approximate to order $d$'' means ``equal in $\KK[\XX]/\M^d$'', and this is a zero-dimensional monomial algebra. Our study of their automorphism group unveils that the automorphisms of $\KK[\XX]/\M^d$ coming from tame automorphisms of $\KK[\XX]$ are those assembled from root subgroups: the outer root subgroups lift to elementary automorphisms and the torus lifts to diagonal ones. The structure theory developed below identifies the subgroup they generate, and the answer is that it is precisely the subgroup of automorphisms with constant Jacobian determinant.\\

We now give a brief summary of each section of the article.

\medskip\noindent\textbf{General structure.} \cref{sec:structure} is devoted to
the general structure of $G$. We begin by studying the root subgroups themselves. For a
root degree $\alpha$, the homogeneous derivations of degree $\alpha$ are parametrized by a
$\KK$-subspace of covectors $N(\alpha)\subseteq N_\KK$ (\cref{def:roots}), and
exponentiating them yields a map $\iota_\alpha\colon N(\alpha)\to G$,
$p\mapsto\exp(\overline\partial_{\alpha,p})$, which we show to be always injective (\cref{lem:iota-injective}). In
contrast with the toric case, where every such family is one-dimensional
\cite[Th\'eor\`eme~2\,(b)]{demazure1970sous}, the image of $\iota_\alpha$ may here be a
higher-dimensional additive group: we characterize, in combinatorial terms involving
$\alpha$ and $\supp(I)$, exactly when $\iota_\alpha$ is a group homomorphism, in which
case its image is a generalized root subgroup of $G$ isomorphic to $\GA^m$ with
$m=\dim N(\alpha)$ (\cref{prop:gen-root}). This is automatic for outer degrees, but it may fail for inner ones.

With these subgroups at hand, we then use the
natural action of $G$ on the cotangent space $\M/\M^2$ to decompose $G$ as a
semidirect product $G\cong U_1\rtimes G_1$, where $U_1$ is a connected unipotent
normal subgroup, consisting of the automorphisms that act trivially on
$\M/\M^2$, and $G_1$ is generated by $T$, the toric automorphisms and the root
subgroups of degree-sum zero (\cref{prop:semidirect}). We further present the Levi decomposition of $G^0$, identifying its reductive part:
up to a permutation of the variables it is a standard Levi subgroup of
$\GL_n(\KK)$, and hence isomorphic to a product $\prod_{j=1}^r\GL_{m_j}(\KK)$
with $m_1+\cdots+m_r=n$ (\cref{prop:levi}). We also show that every such Levi subgroup is realized:
given any standard Levi subgroup $L$ of $\GL_n(\KK)$ (or the corresponding
standard parabolic subgroup $P_L$), we construct a monomial ideal whose automorphism
group has $L$ (resp.~$P_L$) as the identity component of its linear part (\cref{prop:levi exist}).

\medskip\noindent\textbf{The component group.} The component group $G/G^0$ is
finite and generated by toric automorphisms, i.e.\ by permutations of the variables
preserving $I$. In \cref{sec:fin-groups} we determine which finite groups occur as
$G/G^0$: every one does. Given a finite group $\Gamma$, Frucht's theorem
\cite[Sections~2--3]{Frucht1939} provides a finite graph $H$ with $\Aut(H)\cong\Gamma$, and a simple
monomial ideal built from $H$ realizes $\Gamma$ as $G/G^0$; we moreover arrange the
construction so that $G\cong G^0\rtimes\Gamma$ with $G^0$ solvable (\cref{thm:realize-fin}), and we record
explicit families realizing the symmetric and the dihedral groups.

\medskip\noindent\textbf{Generating sets.} \cref{sec:generators}
refines \cref{thm:dlmr} by extracting a minimal set of roots $\gamma$ such that the family of root subgroups $U_{\gamma,p}$ along with the torus $T$, generate the whole group. The right measure is Lie-theoretic:
for a degree $\gamma$ we let $B_\gamma\subseteq\mathfrak g_\gamma$ be the sum of the brackets
$[\mathfrak g_\alpha,\mathfrak g_\beta]$ over all pairs of roots with $\alpha+\beta=\gamma$,
and we call $\gamma$ reducible when $B_\gamma=\mathfrak g_\gamma$
(\cref{def:irreducible-degree}); the codimension $\mu(\gamma)$ of $B_\gamma$ in
$\mathfrak g_\gamma$ is then a lower bound for the number of generators needed in
degree~$\gamma$. Reducibility by itself does not allow one to discard a degree, since the
reductions may be circular: already for $I=\M^2$ all six roots are reducible, yet
discarding them all leaves only the torus (\cref{ex:reducible-not-discardable}).

The two families of roots are then analyzed separately. Every inner root other than the
$e_i$ and the $2e_i$ is reducible (\cref{lem:recovery-mixed,lem:recovery-multiple}); for the
degrees $2e_i$ we give a combinatorial criterion for reducibility and show that
$\mu(2e_i)\leq1$ (\cref{prop:two-ei-criterion}); and for the degrees $e_i$, where no
direction plays a distinguished role, we list the three possible decompositions and read
$B_{e_i}$ off them (\cref{lem:ei-decompositions,prop:ei-Iouter-search}). This reduces the
inner family to a set $E_{\mathrm{min}}$ of degrees that are minimal for the relation
$e_i\succ e_j$ given by $e_i-e_j\in\R(I)$ (\cref{lem:discardable}). On the outer side, the
roots in each direction are computed from the minimal generators of $I$ by iterated colon
ideals, which cuts the family down to an explicit finite set $\widetilde G(I)$
(\cref{rmk:outer-colon,lem:outer-minimal,lem:outer-reducible}); both reductions are combined
in \cref{prop:discardable2}. This proposition is good enough for pinning a generating set that is quickly computable, but we go a bit further by trying to extract from this set those roots that are redundant. \cref{strategy:generators} takes as input the minimal monomial generating set of $I$ and yields as output a family of root subgroups which, together with $T$, generates $G^0$ and is minimal for this property when the Levi subgroup is $L=T$, which is the generic case (\cref{thm:minimal-generators}). In all generality, the family of roots retained is minimal, in the sense that one cannot get rid of all $U_{\alpha,p}$ in the output for a fixed $\alpha$. The algorithm is run in full on four ideals in
\cref{ex:strategy-run}.

\medskip\noindent\textbf{An application: Anick's theorem.} \cref{sec:anick} puts the previous
analysis to work on a concrete family, the algebras $A_d=\KK[\XX]/\M^d$. We first describe
their roots: the inner ones are the $\alpha\in\ZZ^n_{\geq0}$ with $1\leq s(\alpha)\leq d-2$, the roots of degree-sum zero are all the $e_i-e_j$, so that
$L\cong\GL_n(\KK)$ and $G=G^0$, and every inner root is reducible
(\cref{lem:anick-roots}). This is a case where reducibility does not suffice to discard: $E_{\mathrm{min}}$ consists of a single $e_i$ and \cref{strategy:generators} retains $\mathfrak g_{e_i}$ (\cref{rmk:anick-not-discardable}). However, these inner generators are detected by the Jacobian operator. Every class in $A_d$ has a unique representative of degree $<d$, which makes the operator $\jac$ available on $G$ and its infinitesimal counterpart $\divi$ available on $\Der(A_d)$. As outer root derivations have trivial divergence, we use Lie brackets to produce, in each inner root space, the whole hyperplane of derivations with trivial divergence. This implies that the subgroup $H$ generated by the maximal torus and the outer root subgroups is exactly the subgroup $G^{\jac}$ of automorphisms with constant Jacobian determinant, of codimension $\binom{n+d-2}{n}-1$ in $G$ (\cref{prop:anick-outer}). Since the outer root subgroups lift to elementary automorphisms of $\KK[\XX]$ and the torus lifts to diagonal ones, we obtain a new proof of a theorem of Anick \cite[Theorem~1]{Anick83}: an endomorphism of $\KK[\XX]$ whose Jacobian determinant is a nonzero constant coincides, modulo $\M^d$ and for every $d$, with a tame automorphism, that is, the tame automorphisms are dense for the $\M$-adic topology in the set of such endomorphisms (\cref{thm:anick}).

\medskip

\noindent\textbf{Acknowledgements.} The first author was supported by ANID FONDECYT Iniciaci\'on 11260788. The second author was partially supported by ANID FONDECYT Regular 1240001. The third author was supported by ANID Subvenci\'on a la Instalaci\'on en la Academia 85250089 and by ANID FONDECYT Postdoctoral 3240191.

The authors acknowledge the use of Claude AI during the development of this work. This tool was mainly used for the construction of examples and to help with redaction issues. All mathematical arguments, proofs, and final verifications were carried out independently by the authors, who assume full responsibility for the contents of the paper.

\section{Preliminaries}\label{sec:preliminaries}

Let $\KK$ be an algebraically closed field of characteristic zero and let $n\in\ZZ_{>0}$. Throughout this paper, we fix a free $\ZZ$-module $M$ of rank $n$. We also define $N$ as the dual $\ZZ$-module $N=\operatorname{Hom}(M,\ZZ)$. There is a natural duality pairing
$$\langle\ ,\ \rangle \colon M\times N\to \ZZ,\quad  \mbox{defined by}\quad \langle m,p\rangle:=p(m)\,.$$
We also introduce the $\KK$-vector spaces $M_\KK=M\otimes_\ZZ \KK\simeq \KK^n$ and $N_\KK=N\otimes_\ZZ \KK\simeq \KK^n$. The duality pairing above naturally extends to a pairing $\langle\ ,\ \rangle \colon M_\KK\times N_\KK\to \KK$. If we identify $M=\ZZ^n$ via the canonical basis $E=\{e_1,\ldots,e_n\}$ and $N=\ZZ^n$ via the dual basis $E^*=\{e^*_1,\ldots,e^*_n\}$ of $E$, then the duality pairing simply becomes the standard scalar product.

\subsection{Semigroup algebras and monomial ideals}
The semigroup algebra $\KK[S]$ of a monoid $S$ is defined as
$$\KK[S]=\bigoplus_{m\in S}\KK\cdot\XX^m\quad\mbox{where}\quad \XX^m\cdot\XX^{m'}=\XX^{m+m'}\quad\mbox{and}\quad \XX^0=1\,.$$
If $S$ is the monoid generated by $E$, then $S=\ZZ_{\geq 0}^n$. Setting $\XX^{e_i}=x_i$ induces an isomorphism between the semigroup algebra $\KK[S]$ and the polynomial ring $\KK[\XX]$, where $\XX^{m}$ represents the monomial $x_1^{m_1}\cdots x_n^{m_n}$. We will assume that $S=\ZZ_{\geq 0}^n$ throughout this paper.

An ideal $I\subset \KK[\XX]$ is called monomial if it admits a generating set consisting of monomials, namely, $I=(\XX^{\mathbf{a}_1},\dots,\XX^{\mathbf{a}_l})$, with $\mathbf{a}_k\in \ZZ^n_{\geq 0}$. The support of a monomial ideal $I=(\XX^{\mathbf{a}_1},\dots,\XX^{\mathbf{a}_l})$ is the set $$\supp(I)=\{m\in \ZZ^n_{\geq 0}\mid \XX^m\in I\}=\bigcup_{k=1}^l (\mathbf{a}_k+\ZZ^n_{\geq 0})\,.$$
Following \cite[Section~1]{DLMR24}, we say that a monomial ideal $I=(\XX^{\mathbf{a}_1},\dots,\XX^{\mathbf{a}_l})\subset \KK[\XX]$ has cofinite support if $\ZZ_{\geq 0}^n\setminus\supp(I)$ is finite. Since the classes $\quot{\XX}^m$ with $m\in\ZZ^n_{\geq 0}\setminus\supp(I)$ form a $\KK$-basis of $\KK[\XX]/I$, this occurs if and only if $\dim_\KK (\KK[\XX]/I)<\infty$.

The algebra $\KK[\XX]$ is naturally $\ZZ^n_{\geq 0}$-graded and under this grading every monomial ideal $I$ is a graded ideal. Therefore, the quotient $\KK[\XX]/I$ inherits a natural $\ZZ^n_{\geq 0}$-grading given by $\deg \quot{\XX}^m=m$ if $m\notin \supp(I)$ (recall that if $m\in \supp(I)$, then $\quot{\XX}^m=0$). In the sequel, we always regard $\KK[\XX]/I$ as a $\ZZ^n_{\geq 0}$-graded algebra. Throughout this paper, we also make the assumption that $x_i=\XX^{e_i}\not\in\supp(I)$. This is harmless since any such algebra is also a quotient of a polynomial algebra (in fewer variables) by a monomial ideal.

\subsection{Derivations}\label{sec:derivations}
A derivation $\partial$ on a $\KK$-algebra $B$ is a linear $\KK$-map satisfying the Leibniz rule, i.e., 
$$
\partial(fg)=\partial(f)g+f\partial(g), \quad \text{for all } f,g \in B.
$$
We denote by $\Der(B)$ the vector space of the derivations of $B$. Let $I\subset B$ be an ideal. We also denote by $\Der_I(B)\subset \Der(B)$ the subspace of derivations $\partial$ such that $\partial(I)\subset I$. Defining $[\partial,\partial']:=\partial\circ\partial'-\partial'\circ\partial$, both spaces are equipped with a Lie algebra structure.

A derivation $\partial$ is locally nilpotent if for all $f\in B$ there exists $\ell$ such that $\partial^\ell(f)=0,$ where $\partial^\ell$ is the composition of $\partial$ $\ell$ times. Note that locally nilpotent and nilpotent are equivalent in the case of a finite-dimensional algebra.

Let now $I$ be a monomial ideal on $\KK[\XX]$ and let $\partial\colon\KK[\XX]/I\to\KK[\XX]/I$ be a derivation. The Lie algebras $\Der(\KK[S])$ and $\Der_I(\KK[S])$, where $I$ is a monomial ideal, were studied for various semigroups by several authors; see, for example, \cite[Theorem~2.2.1]{B95}, \cite[Theorem~2.2]{T09}, \cite[Proposition~3.1]{KLL15}, \cite[Propositions~1.3 and~1.4]{DLMR24}. We recall here some of these results.

Recall that $\KK[\XX]/I$ is $\ZZ^n_{\geq 0}$-graded. We say that $\partial$ is homogeneous if it sends homogeneous elements to homogeneous elements. By \cite[Lemma~1.1]{DL24} there exists a unique element $\alpha\in M=\ZZ^n$, called the degree of $\partial$, such that for every $\XX^m\notin\ker\partial$ we have $\partial(\XX^m)=\lambda\XX^{m+\alpha}$ for some $\lambda\in \KK^*$. We say that a homogeneous derivation $\partial$ is inner if $\deg\partial\in \ZZ^n_{\geq 0}$ and outer if $\deg\partial\in \ZZ^n\setminus \ZZ^n_{\geq 0}$. By \cite[Theorem~2.2]{DLMR24}, we know that every such derivation comes from a derivation in $\Der_I(\KK[\XX])\subset\Der(\KK[\XX])$. Now, homogeneous derivations of $\KK[\XX]$ have a very particular shape. By \cite[Proposition~3.1]{KLL15}, every homogeneous derivation of degree $\alpha$ of $\KK[\XX]$ has the following form for a unique $p\in N_\KK$:
\begin{align}\label{eq:homogeneous}
\partial_{\alpha,p}\colon \KK[\XX]\to \KK[\XX]\quad\mbox{given by}\quad \XX^m\mapsto p(m)\cdot\XX^{m+\alpha}
\end{align}
In terms of the standard generators of $\Der(\KK[\XX])$ this reads
$$\partial_{\alpha,p}=\XX^{\alpha}\sum_{i=1}^{n}p(e_i)\,x_i\,\frac{\partial}{\partial x_i}\,,
\qquad\mbox{so that in particular}\qquad
\partial_{\alpha,e_i^*}=\XX^{\alpha+e_i}\,\frac{\partial}{\partial x_i}\,.$$
Throughout the paper we use the notation $\partial_{\alpha,p}$ rather than these
explicit expressions.

If $\alpha$ is inner, then every derivation of $\KK[\XX]$ preserves any monomial ideal. In particular, the above formula defines a derivation in $\KK[\XX]/I$ for every $p\in N_\KK$. On the other hand, by \cite[Theorem~2.7]{liendo2010affine} and \cite[Theorem~2.2.1]{B95}, if $\alpha$ is outer then the derivation of $\KK[\XX]$ inducing it must be of the form $\partial_{\alpha,\lambda e^*_k}$ with $1\leq k\leq n$, $\lambda\in \KK$ and $\alpha$ such that:
\begin{enumerate}[(i)]
    \item $e_k^*(\alpha)=-1$;\label{eq:outer-i}
    \item $e_j^*(\alpha)\geq 0$ for every $j\neq k$;\label{eq:outer-ii}
    \item for every $\beta\in\supp(I)$, either $\alpha+\beta\in\supp(I)$ or $\alpha+\beta\not\in\ZZ_{\geq 0}^n$.\label{eq:outer-iii}
\end{enumerate}
Note that the last condition, which ensures that the derivation preserves the ideal, can be checked by only looking at the generators of the ideal $I$.

In order to decide whether such a derivation is nontrivial, we introduce
the following notation. Following \cite{DLA26}, for $\alpha\in\ZZ^n$, define
$$
E_\alpha:=\{e_i\in E\mid \alpha+e_i\in\ZZ_{\geq 0}^{n}\setminus\supp(I)\}.
$$
We have then the following result, which is essentially \cite[Lem.~3.1]{DLA26}.

\begin{lemma}\label{lem:nontrivial-criterion}
Let $\partial_{\alpha,p}\in\Der_I(\KK[\XX])$ be a homogeneous derivation, with $\alpha\in\ZZ^{n}$ and $p\in N_\KK$.
The induced derivation $\overline\partial_{\alpha,p}$ of $\KK[\XX]/I$ is
trivial if and only if $p(e_i)=0$ for every $e_i\in E_\alpha$. In particular, $\overline\partial_{\alpha,e_i^*}$ is nontrivial if and only if $e_i\in E_\alpha$.
\end{lemma}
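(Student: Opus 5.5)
The plan is to test the induced derivation on the monomial basis of $\KK[\XX]/I$. Since $\overline\partial_{\alpha,p}$ is linear, it is trivial if and only if it vanishes on every basis class $\quot{\XX}^m$ with $m\in\ZZ^n_{\geq0}\setminus\supp(I)$. For such $m$, formula \eqref{eq:homogeneous} gives
$$\overline\partial_{\alpha,p}(\quot{\XX}^m)=p(m)\,\quot{\XX}^{m+\alpha}.$$
This is zero exactly when $p(m)=0$, or $m+\alpha\notin\ZZ^n_{\geq0}$, or $m+\alpha\in\supp(I)$. Here $\XX^{m+\alpha}$ is interpreted as $0$ when $m+\alpha\notin\ZZ^n_{\geq0}$; this is consistent, because in the outer case $p=\lambda e_k^*$ and $p(m)=0$ whenever $m_k=0$. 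So triviality amounts to the condition
$$p(m)=0\quad\text{for every } m\notin\supp(I) \text{ with } m+\alpha\in\ZZ^n_{\geq0}\setminus\supp(I).$$

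For the forward direction, I specialize to $m=e_i$. By the standing assumption $x_i\notin\supp(I)$, each $e_i$ is a valid basis exponent, and $e_i\in E_\alpha$ means precisely that $e_i+\alpha\in\ZZ^n_{\geq0}\setminus\supp(I)$. Triviality then forces $p(e_i)=0$ for every $e_i\in E_\alpha$.

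For the converse, assume $p(e_i)=0$ for all $e_i\in E_\alpha$. Take any $m\notin\supp(I)$ with $m+\alpha\in\ZZ^n_{\geq0}\setminus\supp(I)$, and expand $p(m)=\sum_i m_i\,p(e_i)$. It suffices to show that every $i$ with $m_i>0$ and $p(e_i)\neq0$ has $e_i\in E_\alpha$, since such $i$ cannot exist. This is the key step, and the tool is that $\ZZ^n_{\geq0}\setminus\supp(I)$ is closed under taking smaller exponents, because $\supp(I)$ is an upward-closed set.

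Suppose $m_i>0$. I need two things.
\begin{itemize}
\item \emph{Nonnegativity.} In the inner case, $\alpha+e_i\geq0$ is automatic. In the outer case, $p=\lambda e_k^*$ with $\lambda\neq0$, so only $i=k$ matters. Then $(\alpha+e_k)_k=0$, and the other coordinates of $\alpha+e_k$ are nonnegative by conditions (i) and (ii).
\item \emph{Non-membership.} We have $\alpha+e_i\leq\alpha+m$ componentwise, since $m-e_i\geq0$. As $\alpha+m\notin\supp(I)$, downward closure gives $\alpha+e_i\notin\supp(I)$.
\end{itemize}
Hence $e_i\in E_\alpha$, so $p(e_i)=0$ and therefore $p(m)=0$.

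The particular case follows by taking $p=e_i^*$: then $p(e_j)=\delta_{ij}$, so the criterion says triviality holds if and only if $e_i\notin E_\alpha$. The main obstacle is the bookkeeping in the outer case. There one has to check that the convention $\XX^{m+\alpha}=0$ for negative exponents agrees with the vanishing of $p(m)$, and that only the coordinate $k$ contributes. Both points follow from the shape $p=\lambda e_k^*$ and conditions (i) and (ii).
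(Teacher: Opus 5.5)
Your proof is correct and follows the paper's argument. The forward direction tests on $\overline\XX^{e_i}$, and your converse is the contrapositive of the paper's step: $p(m)\neq0$ gives some $i$ with $m_i\geq1$ and $\alpha+e_i\in\supp(I)$, hence $m+\alpha\in\supp(I)$. Your explicit check that $\alpha+e_i\in\ZZ^n_{\geq0}$ in the outer case, using $p=\lambda e_k^*$ and conditions (i)--(ii), fills in a step the paper leaves implicit when it passes from $e_i\notin E_\alpha$ to $\alpha+e_i\in\supp(I)$.
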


\begin{proof}
Since $\overline\partial_{\alpha,p}(\overline\XX^m)=p(m)\,\overline\XX^{m+\alpha}$,
the derivation is trivial if and only if $p(m)=0$ or
$m+\alpha\in\supp(I)$ for every $m\in\ZZ_{\geq 0}^n\setminus\supp(I)$.
If $e_i\in E_\alpha$ and $p(e_i)\neq0,$ for some $i,$ then
$\overline\partial_{\alpha,p}(\overline\XX^{e_i})
=p(e_i)\,\overline\XX^{e_i+\alpha}\neq0,$
since $\alpha+e_i\notin\supp(I)$, so $\overline\partial_{\alpha,p}$ is nontrivial.

Conversely, suppose $p(e_i)=0$ for every $e_i\in E_\alpha$.
For any $m\in\ZZ_{\geq 0}^n$ with
$p(m)=\sum_i p(e_i)\,m_i\neq0$, there exists $i$ with $p(e_i)\neq0$ and
$m_i\geq1$, hence $e_i\notin E_\alpha$ and $\alpha+e_i\in\supp(I)$.
Since $m\geq e_i$, we get
$m+\alpha\in\alpha+e_i+\ZZ_{\geq 0}^n\subset\supp(I)$,
so $\overline\XX^{m+\alpha}=0$ in $\KK[\XX]/I$.
\end{proof}

\begin{definition}\label{def:roots}
We set $\mathcal R(I)$ as the set of degrees $\alpha\neq0$ such that there exists a nontrivial derivation of $\KK[\XX]/I$ of degree $\alpha$. We call them roots, and we distinguish between inner and outer roots in the same way as for degrees. By \cref{lem:nontrivial-criterion}, an inner degree $\alpha$ is in $\mathcal R(I)$ if and only if $E_\alpha$ is nonempty, while an outer degree $\alpha$ is in $\mathcal R(I)$ if and only if it satisfies \eqref{eq:outer-iii} and $E_\alpha$ is nonempty.

Since for outer roots $E_\alpha$ corresponds to a unique element $e_i$ of the canonical basis, the set of outer roots is the disjoint union of the sets $\mathcal{R}_{i}(I):=\{\alpha\text{ outer root}\mid e_i\in E_\alpha\}$. The set $\mathcal R_i(I)$ can also be seen as the set of outer roots whose (unique) negative coordinate is the $i$-th one. Alternatively, we will say that the direction of an outer root $\alpha$ is $e_i^*$ to signal that $\alpha\in\mathcal{R}_{i}(I)$ (since then the only possible derivation, up to scalar, is $\overline\partial_{\alpha,e_i^*}$).

Finally, for $\alpha\in \mathcal{R}(I)$, define
$$
N(\alpha) =
\begin{cases}
  \displaystyle{\bigoplus_{{e_i}\in E_\alpha}} \KK\cdot {e_i^*}  , & \text{if } \alpha \text{ is inner}, \\[0.8em]
 \KK\cdot e_j^* , & \text{if } \alpha \text{ is outer and } \alpha\in\mathcal{R}_{j}(I). \\[0.8em]
\end{cases}
$$
\end{definition}

As a direct consequence of \cref{lem:nontrivial-criterion}, the set $N(\alpha)$ parametrizes the nontrivial derivations of degree $\alpha$. For further reference, we state this as follows.

\begin{lemma}\label{lem:nonzero-deriv}
If $\alpha \in \mathcal{R}(I)$ and $p \in N(\alpha)$, then
$\overline{\partial}_{\alpha,p}=0$ if and only if $p=0$.\qed
\end{lemma}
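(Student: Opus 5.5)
The statement is a direct consequence of \cref{lem:nontrivial-criterion}; the plan is to check that each $p\in N(\alpha)$ is in the range where that lemma applies, then read off which coordinates of $p$ it constrains. The implication $p=0\Rightarrow\overline\partial_{\alpha,p}=0$ is immediate, since $\partial_{\alpha,0}=0$ by \eqref{eq:homogeneous}. For the converse, I would first verify that $\partial_{\alpha,p}\in\Der_I(\KK[\XX])$, so that $\overline\partial_{\alpha,p}$ is defined. I would then apply \cref{lem:nontrivial-criterion}: if $\overline\partial_{\alpha,p}=0$, then $p(e_i)=0$ for every $e_i\in E_\alpha$. The remaining work is to show that these vanishing conditions force $p=0$ for $p\in N(\alpha)$, which I would do separately for inner and outer $\alpha$.

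\emph{Inner case.} Here every homogeneous derivation of $\KK[\XX]$ preserves the monomial ideal $I$, so $\partial_{\alpha,p}\in\Der_I(\KK[\XX])$ automatically. By \cref{def:roots}, $p=\sum_{e_i\in E_\alpha}\lambda_i e_i^*$, so $p(e_i)=\lambda_i$ for each $e_i\in E_\alpha$. The lemma gives $\lambda_i=0$ for all these $i$, hence $p=0$.

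\emph{Outer case.} Here $\alpha\in\mathcal R_j(I)$ and $p=\lambda e_j^*$. Since $\alpha$ is a root, it satisfies conditions (i)--(iii) for $k=j$. Condition (iii) is exactly the requirement that $\partial_{\alpha,\lambda e_j^*}$ preserve $I$, so the derivation lies in $\Der_I(\KK[\XX])$. Membership $\alpha\in\mathcal R_j(I)$ means $e_j\in E_\alpha$. The lemma then gives $\lambda=p(e_j)=0$, so $p=0$.

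The only point needing care is this membership in $\Der_I(\KK[\XX])$ in the outer case, which is what justifies applying \cref{lem:nontrivial-criterion} there. Otherwise the argument is bookkeeping with the definitions of $N(\alpha)$ and $E_\alpha$.
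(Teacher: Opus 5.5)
Your proof is correct and follows essentially the paper's approach: the paper records this lemma without a separate proof, as a direct consequence of \cref{lem:nontrivial-criterion}, using that $N(\alpha)$ is spanned exactly by the $e_i^*$ with $e_i\in E_\alpha$ (by $e_j^*$ with $E_\alpha=\{e_j\}$ in the outer case). Your inner/outer case split, together with the explicit check that $\partial_{\alpha,p}\in\Der_I(\KK[\XX])$ via condition (iii), simply spells out what the paper leaves implicit.
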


We close this subsection by recording the Lie bracket of two homogeneous
derivations of $\KK[\XX]/I$, which will be used systematically in the sequel.

\begin{remark}\label{rmk:lie-bracket}
For homogeneous derivations $\overline\partial_{\alpha,p}$ and
$\overline\partial_{\beta,q}$ of $\KK[\XX]/I$,
$$
[\overline\partial_{\alpha,p},\,\overline\partial_{\beta,q}]
= \overline\partial_{\alpha+\beta,\,r},
\qquad r = p(\beta)\,q - q(\alpha)\,p.
$$
The bracket vanishes if and only if $r_i = 0$ for every
$e_i\in E_{\alpha+\beta}$ (\cref{lem:nontrivial-criterion}); in particular it vanishes
whenever $\alpha + \beta \in \supp(I)$.
\end{remark}

\begin{example}
In the following illustration, the left diagram marks as red points the degrees that are ``candidates'' for outer roots, as they satisfy conditions \eqref{eq:outer-i} -- \eqref{eq:outer-iii}, and as green points the degrees that are ``candidates'' for inner roots. However, certain points yield $E_\alpha=\varnothing$, hence in the right diagram we only retain the actual inner and outer roots.

$$
\begin{array}{cc}
\scalebox{0.7}{\begin{picture}(100,80)
\definecolor{gray1}{gray}{0.7}
\definecolor{gray2}{gray}{0.85}
\definecolor{green}{RGB}{0,124,0}

\textcolor{gray2}{\put(0,20){\vector(1,0){80}}}

\textcolor{gray2}{\put(10,10){\vector(0,1){80}}}

\put(0,10){\textcolor{gray1}{\circle*{3}}}
\put(10,10){\textcolor{gray1}{\circle*{3}}}
\put(20,10){\textcolor{gray1}{\circle*{3}}} 
\put(30,10){\textcolor{red}{\circle*{3}}} 
\put(40,10){\textcolor{red}{\circle*{3}}} 
\put(50,10){\textcolor{red}{\circle*{3}}} 
\put(60,10){\textcolor{red}{\circle*{3}}} 
\put(70,10){\textcolor{red}{\circle*{3}}} 
\put(80,10){\textcolor{red}{\circle*{3}}}

\put(0,20){\textcolor{gray1}{\circle*{3}}}
\put(10,20){\circle{3}} 
\put(20,20){\textcolor{green}{\circle*{3}}} 
\put(30,20){\textcolor{green}{\circle*{3}}} 
\put(40,20){{\circle*{3}}} 
\put(50,20){{\circle*{3}}} 
\put(60,20){\circle*{3}}
\put(70,20){\circle*{3}}
\put(80,20){\circle*{3}}

\put(0,30){\textcolor{gray1}{\circle*{3}}}
\put(10,30){\textcolor{green}{\circle*{3}}} 
\put(20,30){\textcolor{green}{\circle*{3}}} 
\put(30,30){\textcolor{green}{\circle*{3}}} 
\put(40,30){{\circle*{3}}} 
\put(50,30){{\circle*{3}}} 
\put(60,30){{\circle*{3}}} 
\put(70,30){\circle*{3}}
\put(80,30){\circle*{3}}

\put(0,40){\textcolor{red}{\circle*{3}}}
\put(10,40){\textcolor{green}{\circle*{3}}} 
\put(20,40){\textcolor{green}{\circle*{3}}} 
\put(30,40){{\circle*{3}}} 
\put(40,40){{\circle*{3}}}
\put(50,40){{\circle*{3}}}
\put(60,40){\circle*{3}} 
\put(70,40){\circle*{3}}
\put(80,40){\circle*{3}}

\put(0,50){\textcolor{red}{\circle*{3}}}
\put(10,50){{\circle*{3}}} 
\put(20,50){{\circle*{3}}} 
\put(30,50){{\circle*{3}}} 
\put(40,50){\circle*{3}}
\put(50,50){\circle*{3}}
\put(60,50){\circle*{3}} 
\put(70,50){\circle*{3}} 
\put(80,50){\circle*{3}}

\put(0,60){\textcolor{red}{\circle*{3}}}
\put(10,60){{\circle*{3}}} 
\put(20,60){{\circle*{3}}} 
\put(30,60){{\circle*{3}}} 
\put(40,60){\circle*{3}}
\put(50,60){\circle*{3}}
\put(60,60){\circle*{3}} 
\put(70,60){\circle*{3}}
\put(80,60){\circle*{3}}

\put(0,70){\textcolor{red}{\circle*{3}}}
\put(10,70){{\circle*{3}}}
\put(20,70){{\circle*{3}}}
\put(30,70){{\circle*{3}}}
\put(40,70){{\circle*{3}}}
\put(50,70){\circle*{3}}
\put(60,70){\circle*{3}} 
\put(70,70){\circle*{3}} 
\put(80,70){\circle*{3}}

\put(0,80){\textcolor{red}{\circle*{3}}}
\put(10,80){{\circle*{3}}}
\put(20,80){{\circle*{3}}}
\put(30,80){{\circle*{3}}}
\put(40,80){{\circle*{3}}}
\put(50,80){{\circle*{3}}}
\put(60,80){\circle*{3}} 
\put(70,80){\circle*{3}} 
\put(80,80){\circle*{3}}

\put(0,90){\textcolor{red}{\circle*{3}}}
\put(10,90){{\circle*{3}}}
\put(20,90){{\circle*{3}}}
\put(30,90){{\circle*{3}}}
\put(40,90){{\circle*{3}}}
\put(50,90){{\circle*{3}}}
\put(60,90){{\circle*{3}}}
\put(70,90){\circle*{3}} 
\put(80,90){\circle*{3}}

\end{picture}} &
\scalebox{0.7}{\begin{picture}(100,80)
\definecolor{gray1}{gray}{0.7}
\definecolor{gray2}{gray}{0.85}
\definecolor{green}{RGB}{0,124,0}

\textcolor{gray2}{\put(0,20){\vector(1,0){80}}}

\textcolor{gray2}{\put(10,10){\vector(0,1){80}}}

\put(0,10){\textcolor{gray1}{\circle*{3}}}
\put(10,10){\textcolor{gray1}{\circle*{3}}}
\put(20,10){\textcolor{gray1}{\circle*{3}}} 
\put(30,10){\textcolor{red}{\circle*{3}}} 
\put(40,10){\textcolor{gray1}{\circle*{3}}} 
\put(50,10){\textcolor{gray1}{\circle*{3}}} 
\put(60,10){\textcolor{gray1}{\circle*{3}}} 
\put(70,10){\textcolor{gray1}{\circle*{3}}}  
\put(80,10){\textcolor{gray1}{\circle*{3}}}

\put(0,20){\textcolor{gray1}{\circle*{3}}}
\put(10,20){\circle{3}} 
\put(20,20){\textcolor{green}{\circle*{3}}} 
\put(30,20){\textcolor{green}{\circle*{3}}} 
\put(40,20){{\circle*{3}}} 
\put(50,20){{\circle*{3}}} 
\put(60,20){\circle*{3}}
\put(70,20){\circle*{3}}
\put(80,20){\circle*{3}}

\put(0,30){\textcolor{gray1}{\circle*{3}}}
\put(10,30){\textcolor{green}{\circle*{3}}} 
\put(20,30){\textcolor{green}{\circle*{3}}} 
\put(30,30){\circle{3}} 
\put(40,30){{\circle*{3}}} 
\put(50,30){{\circle*{3}}} 
\put(60,30){{\circle*{3}}} 
\put(70,30){\circle*{3}}
\put(80,30){\circle*{3}}

\put(0,40){\textcolor{red}{\circle*{3}}}
\put(10,40){\textcolor{green}{\circle*{3}}} 
\put(20,40){\circle{3}} 
\put(30,40){{\circle*{3}}} 
\put(40,40){{\circle*{3}}}
\put(50,40){{\circle*{3}}}
\put(60,40){\circle*{3}} 
\put(70,40){\circle*{3}}
\put(80,40){\circle*{3}}

\put(0,50){\textcolor{gray1}{\circle*{3}}} 
\put(10,50){{\circle*{3}}} 
\put(20,50){{\circle*{3}}} 
\put(30,50){{\circle*{3}}} 
\put(40,50){\circle*{3}}
\put(50,50){\circle*{3}}
\put(60,50){\circle*{3}} 
\put(70,50){\circle*{3}} 
\put(80,50){\circle*{3}}

\put(0,60){\textcolor{gray1}{\circle*{3}}} 
\put(10,60){{\circle*{3}}} 
\put(20,60){{\circle*{3}}} 
\put(30,60){{\circle*{3}}} 
\put(40,60){\circle*{3}}
\put(50,60){\circle*{3}}
\put(60,60){\circle*{3}} 
\put(70,60){\circle*{3}}
\put(80,60){\circle*{3}}

\put(0,70){\textcolor{gray1}{\circle*{3}}} 
\put(10,70){{\circle*{3}}}
\put(20,70){{\circle*{3}}}
\put(30,70){{\circle*{3}}}
\put(40,70){{\circle*{3}}}
\put(50,70){\circle*{3}}
\put(60,70){\circle*{3}} 
\put(70,70){\circle*{3}} 
\put(80,70){\circle*{3}}

\put(0,80){\textcolor{gray1}{\circle*{3}}} 
\put(10,80){{\circle*{3}}}
\put(20,80){{\circle*{3}}}
\put(30,80){{\circle*{3}}}
\put(40,80){{\circle*{3}}}
\put(50,80){{\circle*{3}}}
\put(60,80){\circle*{3}} 
\put(70,80){\circle*{3}} 
\put(80,80){\circle*{3}}

\put(0,90){\textcolor{gray1}{\circle*{3}}} 
\put(10,90){{\circle*{3}}}
\put(20,90){{\circle*{3}}}
\put(30,90){{\circle*{3}}}
\put(40,90){{\circle*{3}}}
\put(50,90){{\circle*{3}}}
\put(60,90){{\circle*{3}}}
\put(70,90){\circle*{3}} 
\put(80,90){\circle*{3}} 

\end{picture}}\\
\multicolumn{2}{c}{I=(x^3,x^2y^2,y^3)}
\end{array}
$$
\end{example}

\section{The general structure of the automorphism group}\label{sec:structure}

Let \( I \subset \KK[\XX] \) be a monomial ideal with cofinite support, let $G=\Aut_\KK(\KK[\XX]/I)$ and consider the subtorus $T=\spec(\KK[M])\subset G$. We restate here the main result of \cite[Theorem~4.21]{DLMR24} concerning the structure of $G$.

For $\alpha\in\mathcal R(I)$ and $p\in N(\alpha)$, write $U_{\alpha,p}$
for the closed subgroup of $G$ generated by the one-parameter group
$\{\exp(t\,\overline\partial_{\alpha,p}):t\in\KK\}$; it is isomorphic to $\GA$ when
$\overline\partial_{\alpha,p}\neq0$ and is trivial otherwise. When $N(\alpha)$ is
one-dimensional, for instance when $\alpha$ is an outer root, there is essentially a
single such subgroup, which we then denote simply by $U_\alpha$.

\begin{theorem}[{\cite[Theorem~4.21]{DLMR24}}]\label{thm:dlmr}
Let $I,G,T$ be as above. Then the following hold:
\begin{enumerate}[(i)]
    \item $G$ is linear algebraic and $T$ is a maximal torus of $G$.
    \item The neutral component $G^0$ is generated by $T$ and the $U_{\alpha, p}$ for $\alpha\in\mathcal R(I)$ and $p\in N(\alpha)$.
    \item The finite quotient $G/G^0$ is generated by toric automorphisms.
\end{enumerate}
\end{theorem}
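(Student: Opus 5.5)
Since $A=\KK[\XX]/I$ is finite-dimensional, I would first realize $G$ as a closed subgroup of $\GL(A)$. The subgroup $\Aut_\KK(A)\subseteq\GL(A)$ is cut out by the polynomial equations $\varphi(fg)=\varphi(f)\varphi(g)$ on a basis, together with $\varphi(1)=1$. This makes $G$ linear algebraic. The torus $T=\spec(\KK[M])$ acts by $t\cdot\quot{\XX}^m=\chi^m(t)\quot{\XX}^m$ and embeds in $G$.

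For maximality of $T$ I would use the induced action on the cotangent space $\M/\M^2$. This space has basis $\quot x_1,\dots,\quot x_n$, because $x_i\notin\supp(I)$ and every relation of $I$ lies in $\M^2$ (we may assume no $x_i$ lies in $I$). This gives a homomorphism $\rho\colon G\to\GL_n(\KK)$. Its kernel is unipotent: it acts trivially on the graded pieces $\M^k/\M^{k+1}$, and $\M$ is nilpotent. Hence a torus $T'\supseteq T$ in $G$ maps isomorphically onto a torus of $\GL_n$ containing the full diagonal torus $\rho(T)$, so $T'=T$.

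For (ii) I would pass to the Lie algebra. $\mathrm{Lie}(G)=\Der(A)$, and since $T$ acts on $\Der(A)$, it decomposes into $T$-weight spaces, namely the homogeneous derivations. The degree-$0$ part is $\mathrm{Lie}(T)$, because degree-$0$ derivations are exactly the $\overline\partial_{0,p}$. By \cref{lem:nontrivial-criterion} and \cref{def:roots}, the nonzero weights are exactly the roots $\alpha\in\mathcal R(I)$, with weight space $\{\overline\partial_{\alpha,p}: p\in N(\alpha)\}$.

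Each $\overline\partial_{\alpha,p}$ with $\alpha\neq0$ is nilpotent. For inner $\alpha$ it raises total degree, and $A$ is finite-dimensional. For outer $\alpha$ it is a positive-degree operator with respect to a suitable grading, or one can argue directly that $\partial^\ell(\XX^m)$ has degree $m+\ell\alpha$, which eventually leaves $\ZZ^n_{\geq0}$. So each one exponentiates to a subgroup $U_{\alpha,p}\cong\GA$ in $G^0$.

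Let $H$ be the closed connected subgroup generated by $T$ and all the $U_{\alpha,p}$. Then $\mathrm{Lie}(H)$ contains $\mathrm{Lie}(T)$ and every root space, hence equals $\Der(A)=\mathrm{Lie}(G^0)$. In characteristic zero this gives $H=G^0$. The subtle point is that the $\overline\partial_{\alpha,p}$ for varying $p$ do span the root space, which is immediate from the definition of $N(\alpha)$.

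For (iii) the plan is a conjugacy argument. Take $g\in G$. Then $gTg^{-1}$ is a maximal torus of $G^0$, so by conjugacy of maximal tori there is $h\in G^0$ with $hg\in N_G(T)$. It then suffices to show $N_G(T)\subseteq T\cdot\{\text{toric automorphisms}\}\cdot G^0$. An element $\nu\in N_G(T)$ permutes the $T$-weight spaces of $A$, which are the lines $\KK\quot\XX^m$ since these weights are distinct characters. It also permutes the weight spaces of $\M/\M^2$, which are spanned by the $\quot x_i$. So $\nu$ sends $\quot x_i\mapsto c_i\quot x_{\sigma(i)}$ for some permutation $\sigma$. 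Composing with an element of $T$, $\nu$ becomes the permutation automorphism $\sigma$, which must then preserve $I$. Hence $G/G^0$ is generated by images of toric automorphisms.

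The main obstacle I expect is the nilpotency and $\GA$-integrability of the outer derivations, together with the identification $\mathrm{Lie}(G)=\Der(A)$ needed to compare dimensions in (ii). I would handle the latter by computing the tangent space of the scheme $\Aut(A)$ at the identity via $\KK[\epsilon]$-points and using smoothness in characteristic zero (Cartier).
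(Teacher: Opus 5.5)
The paper gives no proof of \cref{thm:dlmr}: it quotes it from \cite[Theorem~4.21]{DLMR24}, which is built on the homogeneous locally nilpotent derivation machinery developed there. There is therefore no internal argument to compare with. Your proposal is a correct, self-contained proof that uses only standard structure theory. Cartier smoothness gives $\operatorname{Lie}(G)=\Der(A)$. For (ii) you use the $T$-weight decomposition plus a dimension count. For (iii) you use the Frattini argument $G=G^0\,N_G(T)$ together with the identification of $N_G(T)$.

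The individual steps check out.
\begin{itemize}
\item For (i), $\ker\rho$ is unipotent. So a torus $T'\supseteq T$ meets it trivially and maps injectively into $\GL_n(\KK)$ with image containing the maximal diagonal torus $\rho(T)$, which forces $T'=T$.
\item Your nilpotency argument for nonzero weights is fine.
\item For (iii), combine your two observations explicitly. An element $\nu\in N_G(T)$ sends $\quot x_i$ to a weight vector, i.e.\ a multiple of some $\quot\XX^{m}$. That vector lies in $\M\smallsetminus\M^2$, so it is a nonzero multiple of some $\quot x_{\sigma(i)}$.
\end{itemize}
Your proof of (iii) actually gives the sharper statements $N_G(T)=T\rtimes\Aut(S,I)$ and $G=G^0\cdot\Aut(S,I)$. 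This is exactly the form in which the paper later uses (iii), in \cref{thm:realize-fin}. Your maximality argument is also the same mechanism the paper exploits in \cref{prop:semidirect}.

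One justification needs fixing. The claim that the $\overline\partial_{\alpha,p}$ with $p\in N(\alpha)$ exhaust $\mathfrak g_\alpha$ does not follow from the definition of $N(\alpha)$ alone. It requires that every homogeneous derivation $D$ of $A$ of degree $\alpha$ is induced by some $\partial_{\alpha,p}$; the paper recalls this in \S1.2 from \cite{DLMR24,KLL15}. In your setting it is elementary:
\begin{itemize}
\item Lift each $D(\quot x_i)$ to $c_i\XX^{\alpha+e_i}$, taking $c_i=0$ when $\alpha+e_i\notin\ZZ^n_{\geq0}$.
\item The resulting derivation $\tilde D=\partial_{\alpha,p}$ of $\KK[\XX]$, with $p=\sum_i c_ie_i^*$, satisfies $\pi\circ\tilde D=D\circ\pi$ on generators, hence everywhere. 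Thus $\tilde D(I)\subseteq I$ automatically.
\item For outer $\alpha$ this forces $p\in\KK e_k^*$.
\item \cref{lem:nontrivial-criterion} then lets you replace $p$ by its projection onto $N(\alpha)$.
\end{itemize}
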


A toric automorphism of $\KK[\XX]/I$ is an automorphism induced by a
permutation $\sigma\in S_n$ of the variables, i.e.\ $\overline{x}_k\mapsto
\overline{x}_{\sigma(k)}$; such a permutation descends to $\KK[\XX]/I$ if and
only if $\sigma$ preserves $I$ \cite[Definition~4.14]{DLMR24}. The toric
automorphisms form a finite subgroup $\Aut(S,I)\subseteq G$.

Note that the Lie algebra $\mathfrak g$ of $G$ is precisely the algebra of derivations $\Der_\KK(\KK[\XX]/I)$. We see then that the homogeneous components of $\Der_\KK(\KK[\XX]/I)$ correspond to the summands in the weight decomposition
$$\mathfrak g=\bigoplus_{\alpha\in\ZZ^n}\mathfrak g_\alpha,$$
given by the adjoint action of the torus $T\subset G$ on the Lie algebra $\mathfrak g$.

\subsection{Generalized root subgroups and additive actions}\label{sec:root-subgroups}

A generalized root subgroup of an algebraic group $G$ with respect to
a subgroup $H$ is a closed subgroup isomorphic to $\GA^m$ for some $m\in\mathbb{N},$ that is normalized
by $H$ through a single character \cite[Definition~7.1]{RV21b}; the case
$m=1$ recovers the classical root subgroups. In this subsection we study, for
each root $\alpha\in\mathcal R(I)$, the map
$$\iota_\alpha\colon N(\alpha)\longrightarrow G=\Aut_\KK(\KK[\XX]/I),\qquad
p\longmapsto\exp(\overline\partial_{\alpha,p})\,,$$
whose restriction to the line $\KK\cdot p$ has image $U_{\alpha,p}$.
We characterize when
its image is a generalized root subgroup isomorphic to $\GA^m$, in terms of
combinatorial conditions on $\alpha$ and $\supp(I)$. When $\iota_\alpha(N(\alpha))\cong\GA^m$, the resulting subgroup defines an additive action of $\GA^m$ on $\KK[\XX]/I$ normalized by the torus.

\begin{lemma}\label{lem:iota-injective}
For every $\alpha \in \mathcal{R}(I)$, the map $\iota_\alpha$ is injective.
\end{lemma}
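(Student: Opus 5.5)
We will show that $\iota_\alpha(p)$ determines $p$ by evaluating $\exp(\overline\partial_{\alpha,p})$ on the classes of the variables $\overline x_i$ with $e_i\in E_\alpha$ and reading off a coefficient. Since $\KK[\XX]/I$ is finite-dimensional and $\overline\partial_{\alpha,p}$ is nilpotent (the case $\alpha\neq0$ in the $\ZZ^n$-grading with finitely many surviving degrees), the exponential is a finite sum:
$$\exp(\overline\partial_{\alpha,p})(\overline\XX^m)=\sum_{k\geq0}\frac{1}{k!}\,\overline\partial_{\alpha,p}^{\,k}(\overline\XX^m).$$
The $k$-th term is homogeneous of degree $m+k\alpha$. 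Because $\alpha\neq0$, these degrees are pairwise distinct, so the terms lie in distinct graded pieces of $\KK[\XX]/I$.

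\textbf{Inner case.} Let $e_i\in E_\alpha$ and apply the formula to $\overline x_i$. The degree $e_i+\alpha$ component of $\exp(\overline\partial_{\alpha,p})(\overline x_i)$ is exactly the $k=1$ term,
$$\overline\partial_{\alpha,p}(\overline x_i)=p(e_i)\,\overline\XX^{e_i+\alpha}.$$
This monomial is nonzero in the quotient, since $e_i+\alpha\notin\supp(I)$ by definition of $E_\alpha$. So if $\iota_\alpha(p)=\iota_\alpha(q)$, comparing degree $e_i+\alpha$ components gives $p(e_i)=q(e_i)$ for every $e_i\in E_\alpha$. As $N(\alpha)$ is spanned by the $e_i^*$ with $e_i\in E_\alpha$, this forces $p=q$.

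\textbf{Outer case.} Here $N(\alpha)=\KK e_j^*$, so write $p=\lambda e_j^*$. The same computation on $\overline x_j$ gives degree $e_j+\alpha$ component $\lambda\,\overline\XX^{e_j+\alpha}$, which is nonzero since $e_j\in E_\alpha$. Hence $\lambda$ is determined by $\iota_\alpha(p)$, and the map is injective.

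The only point needing care is the bookkeeping of degrees: we must check that the $k=1$ term is the sole contribution in degree $e_i+\alpha$. This holds because $m+k\alpha=m+\alpha$ forces $k=1$ when $\alpha\neq0$, and roots are nonzero by \cref{def:roots}. Alternatively, one can argue via \cref{lem:nonzero-deriv}. Since $\iota_\alpha(p)-\mathrm{id}=\overline\partial_{\alpha,p}+(\text{higher }\alpha\text{-degree terms})$, taking the graded piece that shifts degree by exactly $\alpha$ recovers $\overline\partial_{\alpha,p}$ from $\iota_\alpha(p)$. The map $p\mapsto\overline\partial_{\alpha,p}$ is linear, and by \cref{lem:nonzero-deriv} it is injective on $N(\alpha)$, so $\iota_\alpha$ is injective as well.
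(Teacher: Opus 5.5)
Your proof is correct, but it handles the exponential step differently from the paper.

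The paper factors $\iota_\alpha$ as the linear map $p\mapsto\overline\partial_{\alpha,p}$, which is injective by \cref{lem:nonzero-deriv}, followed by $\exp$. It gets injectivity of $\exp$ on nilpotent operators from the polynomial inverse $\log$. That step uses no grading at all, and the same $\log$ is reused in \cref{prop:gen-root} to see that $\exp(\mathfrak g_\alpha)$ is closed.

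You instead use homogeneity together with $\alpha\neq0$. The terms $\frac{1}{k!}\overline\partial_{\alpha,p}^{\,k}$ shift degree by the pairwise distinct amounts $k\alpha$, so $\overline\partial_{\alpha,p}$ is recovered from $\iota_\alpha(p)$ as its degree-$\alpha$ component. This is a \emph{linear} operation rather than the nonlinear $\log$. Your main argument goes further and reads $p$ directly off the coefficients of $\overline\XX^{e_i+\alpha}$ in $\iota_\alpha(p)(\overline x_i)$ for $e_i\in E_\alpha$. In passing, this re-proves the half of \cref{lem:nonzero-deriv} that injectivity needs.

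What your route buys is an explicit linear retraction $\rho$ with $\rho\circ\iota_\alpha=\mathrm{id}_{N(\alpha)}$. It is defined on all endomorphisms by $\rho(\phi)=\sum_{e_i\in E_\alpha}c_i(\phi)\,e_i^*$, where $c_i(\phi)$ is that coefficient in $\phi(\overline x_i)$. It therefore gives more than injectivity: the image is the closed set $\{\phi\mid\iota_\alpha(\rho(\phi))=\phi\}$, and $\iota_\alpha$ is an isomorphism onto it. So the same idea could also replace the $\log$ argument for closedness in \cref{prop:gen-root}. The price is that your argument depends on the grading and on $\alpha\neq0$, whereas the paper's injectivity of $\exp$ holds for any nilpotent derivation.
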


\begin{proof}
The map $p\mapsto\overline\partial_{\alpha,p}$ is linear and, by {\cref{lem:nonzero-deriv}}, vanishes only when $p=0$; hence it is injective on $N(\alpha)$.
Since $\alpha\in\mathcal R(I)$, the derivations $\overline\partial_{\alpha,p}$
are nilpotent. Thus $\exp$ has a polynomial inverse
given by $\log = \sum_{k=1}^{n} \frac{(-1)^{k+1}}{k}
(\exp(\overline\partial_{\alpha,p})-\operatorname{id})^k$ for $n$ big enough; in particular $\exp$
is injective on the space of such derivations. Therefore
$\iota_\alpha = \exp\circ\,(p\mapsto\overline\partial_{\alpha,p})$ is the
composition of two injective maps.
\end{proof}

\begin{proposition}\label{prop:gen-root}
Fix a root $\alpha\in\mathcal R(I)$. The image $\iota_\alpha(N(\alpha))$ is a generalized root subgroup of $\Aut_\KK(\KK[\XX]/I)$ with respect to the maximal torus $T$ if and only if one of the following holds:
\begin{enumerate}[(i)]
\item $\alpha$ is outer; or
\item $\alpha$ is inner and, if $E^+_\alpha=\{e_k\in E_\alpha,\;\alpha_k\neq0\}$, then either
\begin{enumerate}
  \item[(ii.1)] $E_{2\alpha}=\varnothing$; or
  \item[(ii.2)] $E_{2\alpha}=E^+_\alpha=\{e_{i_0}\}$.
\end{enumerate}
\end{enumerate}
\end{proposition}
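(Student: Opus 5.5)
The plan is to reduce everything to a commutation condition on the derivations $\overline\partial_{\alpha,p}$, $p\in N(\alpha)$, and then read that condition off \cref{rmk:lie-bracket}.

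\textbf{Step 1 (reduction to commutation).} I will show that $\iota_\alpha(N(\alpha))$ is a generalized root subgroup if and only if the derivations $\overline\partial_{\alpha,p}$, $p\in N(\alpha)$, pairwise commute.

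For the "if" direction, commuting nilpotent derivations satisfy $\exp(D)\exp(D')=\exp(D+D')$. So $\iota_\alpha$ is a homomorphism of algebraic groups $\GA^m\to G$, with $m=\dim N(\alpha)$. It is injective by \cref{lem:iota-injective}, and its image is closed, being the image of a morphism of algebraic groups. The torus acts by $t\,\overline\partial_{\alpha,p}\,t^{-1}=\chi^\alpha(t)\,\overline\partial_{\alpha,p}$. Hence $t\,\iota_\alpha(p)\,t^{-1}=\iota_\alpha(\chi^\alpha(t)p)$, so $T$ normalizes the image through the single character $\alpha$.

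For the "only if" direction, suppose $V=\iota_\alpha(N(\alpha))$ is a closed subgroup isomorphic to $\GA^m$. Each curve $s\mapsto\exp(s\,\overline\partial_{\alpha,p})$ lies in $V$, so $\operatorname{Lie}(V)$ contains every $\overline\partial_{\alpha,p}$. Since $V$ is commutative, $\operatorname{Lie}(V)$ is abelian, and the brackets vanish.

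\textbf{Step 2 (computing brackets).} By \cref{rmk:lie-bracket},
$$[\overline\partial_{\alpha,p},\overline\partial_{\alpha,q}]=\overline\partial_{2\alpha,r},\qquad r=p(\alpha)q-q(\alpha)p,$$
and this vanishes if and only if $r_i=0$ for all $e_i\in E_{2\alpha}$.

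In the outer case $N(\alpha)$ is a line, so all brackets vanish trivially and (i) follows.

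In the inner case, bilinearity reduces the check to basis vectors $p=e_k^*$, $q=e_j^*$ with $e_k,e_j\in E_\alpha$. Then $r=\alpha_k e_j^*-\alpha_j e_k^*$. The commutation condition therefore becomes: for every $e_i\in E_{2\alpha}$ and every $e_k\in E_\alpha\setminus\{e_i\}$, we have $\alpha_k=0$. Here I will use that $E_{2\alpha}\subseteq E_\alpha$, since $\alpha+e_i\le 2\alpha+e_i$ and $\supp(I)$ is an upward-closed set.

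\textbf{Step 3 (matching with (ii.1)/(ii.2)).} The key combinatorial observation is the following.

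\emph{Claim.} If $E_{2\alpha}\neq\varnothing$, then $E^+_\alpha=\{e_j: \alpha_j\neq0\}$, i.e. every coordinate in the support of $\alpha$ lies in $E_\alpha$.

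\emph{Reason.} Take $e_i\in E_{2\alpha}$ and any $j$ with $\alpha_j>0$. Then $\alpha+e_j\le 2\alpha\le 2\alpha+e_i\notin\supp(I)$, so $e_j\in E_\alpha$. Since $\alpha\neq0$, this also gives $E^+_\alpha\neq\varnothing$.

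Now assume the commutation condition and $E_{2\alpha}\neq\varnothing$, and fix $e_{i_0}\in E_{2\alpha}$. The condition forces $E^+_\alpha\subseteq\{e_{i_0}\}$, and the Claim makes $E^+_\alpha$ nonempty, so $E^+_\alpha=\{e_{i_0}\}$. If some other $e_{i_1}$ were in $E_{2\alpha}$, the same argument would give $E^+_\alpha=\{e_{i_1}\}$, a contradiction. Hence $E_{2\alpha}=E^+_\alpha=\{e_{i_0}\}$, which is (ii.2).

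Conversely:
\begin{itemize}
\item (ii.1) makes the condition vacuous.
\item Under (ii.2), the only $e_i\in E_{2\alpha}$ is $e_{i_0}$, and every $e_k\in E_\alpha\setminus\{e_{i_0}\}$ lies outside $E^+_\alpha$, so $\alpha_k=0$.
\end{itemize}

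The step I expect to need the most care is the "only if" direction of Step 1, i.e. passing from "the image is a vector group" to "the derivations commute". The Lie algebra argument should suffice there, provided the tangent vectors are identified carefully.
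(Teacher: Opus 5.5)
Your proposal is correct and follows essentially the paper's route. You reduce to pairwise commutation of $\mathfrak g_\alpha$, compute $\overline\partial_{2\alpha,r}$ via \cref{rmk:lie-bracket}, and use $E_{2\alpha}\subseteq E_\alpha$ together with the fact that $E_{2\alpha}\neq\varnothing$ forces every $e_j$ with $\alpha_j\neq0$ into $E_\alpha$. The one local difference is the converse in your Step~1: you argue that a commutative closed subgroup has an abelian Lie algebra containing $\mathfrak g_\alpha$, whereas the paper reads off the nonzero degree-$2\alpha$ part $\tfrac12\overline\partial_{2\alpha,r}$ of $\log(\iota_\alpha(p)\iota_\alpha(q))$, which shows the image is not even closed under multiplication; both suffice. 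For the forward direction, your homomorphism is an isomorphism onto its image because of characteristic zero, or directly via the polynomial inverse $\log$.
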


\begin{proof}
Under the identification
$p\mapsto\overline\partial_{\alpha,p}$, which is bijective by the definition of
$N(\alpha)$ and {\cref{lem:nonzero-deriv}}, $N(\alpha)$ corresponds to the degree-$\alpha$ part $\mathfrak g_\alpha$ of
$\mathfrak g=\Der(\KK[\XX]/I)$. We first observe that the normalization
condition holds automatically: for $t\in T$ and $p\in N(\alpha)$ one has
$t\circ\overline\partial_{\alpha,p}\circ t^{-1}
=\chi^{\alpha}(t)\,\overline\partial_{\alpha,p}$, since
$\overline\partial_{\alpha,p}$ is homogeneous of degree $\alpha$; applying
$\exp$ yields
$t\,\iota_\alpha(p)\,t^{-1}=\iota_\alpha\!\big(\chi^{\alpha}(t)\,p\big)$, so
$T$ normalizes $\iota_\alpha(N(\alpha))$ acting through the single character
$\chi^\alpha$. It therefore remains to determine when $\iota_\alpha(N(\alpha))$ is
a closed subgroup isomorphic to $\GA^m$. By \cref{rmk:lie-bracket}, for $p,q\in N(\alpha)$, the element $[\overline\partial_{\alpha,p},\overline\partial_{\alpha,q}]=\overline\partial_{2\alpha,\,r}$, with $r=p(\alpha)q-q(\alpha)p$, lies in $\mathfrak g_{2\alpha}$.

If all elements of $\mathfrak g_\alpha$ commute, then, since
$\overline\partial_{\alpha,p+q}=\overline\partial_{\alpha,p}+\overline\partial_{\alpha,q}$,
the map $\iota_\alpha(p)=\exp(\overline\partial_{\alpha,p})$ satisfies
$\iota_\alpha(p)\circ\iota_\alpha(q)=\exp(\overline\partial_{\alpha,p}
+\overline\partial_{\alpha,q})=\iota_\alpha(p+q)$; thus
$\iota_\alpha\colon(N(\alpha),+)\to\Aut_\KK(\KK[\XX]/I)$ is an injective
homomorphism of algebraic groups and its image is the closed subgroup
$\exp(\mathfrak g_\alpha)\cong\GA^m${; it is closed
because $\exp$ is a polynomial isomorphism onto its image, with polynomial
inverse $\log$ as in \cref{lem:iota-injective}}. Conversely, suppose
$[\overline\partial_{\alpha,p},\overline\partial_{\alpha,q}]
=\overline\partial_{2\alpha,r}\neq0$ for some $p,q$. Every element of
$\iota_\alpha(N(\alpha))=\exp(\mathfrak g_\alpha)$ is the exponential of a derivation of
pure degree $\alpha$; on the other hand, since the derivations are nilpotent, a direct computation gives
$$
  \log\!\big(\iota_\alpha(p)\circ\iota_\alpha(q)\big)
  =\overline\partial_{\alpha,p+q}+\tfrac12\,\overline\partial_{2\alpha,r}
   +(\text{terms in degrees }\ge 3\alpha),
$$
whose degree-$2\alpha$ component $\tfrac12\overline\partial_{2\alpha,r}$ is
nonzero. Hence $\iota_\alpha(p)\circ\iota_\alpha(q)$ is not the exponential of
any degree-$\alpha$ derivation, so it does not lie in
$\iota_\alpha(N(\alpha))$. We deduce that the image is not closed under multiplication, and in
particular is not a subgroup isomorphic to $\GA^m$. This proves that
$\iota_\alpha(N(\alpha))\cong\GA^m$ if and only if the
$\overline\partial_{\alpha,p}$ pairwise commute, i.e.\ if and only if
$\overline\partial_{2\alpha,r}=0$ for all $p,q\in N(\alpha)$.

If $\alpha$ is outer, then $N(\alpha)=\KK e_j^*$ for a single $j$, so the derivations commute and the image is $\GA^1$. Assume now that $\alpha$ is inner. Then $N(\alpha)=\bigoplus_{k\in E_\alpha}\KK e_k^*$ and $m=|E_\alpha|$. As $\supp(I)$ is closed under addition by elements of $\ZZ^n_{\geq 0}$, $\alpha+e_i\in\supp(I)$ forces $2\alpha+e_i\in\supp(I)$; thus
$E_{2\alpha}\subseteq E_{\alpha}$. Recall moreover that $p(e_i)=q(e_i)=0$ for every $e_i\not\in E_\alpha$, and that $\overline\partial_{2\alpha,r}=0$ if and only if
$r(e_i)=0$ for every $e_i\in E_{2\alpha}$. This is obviously the case if $E_{2\alpha}=\varnothing$; and if $E_{2\alpha}=E^+_\alpha=\{e_{i_0}\}$, then
\[r(e_{i_0})=p(\alpha)\,q(e_{i_0})-q(\alpha)\,p(e_{i_0})=\alpha_{i_0}\,p(e_{i_0})\,q(e_{i_0})-\alpha_{i_0}\,q(e_{i_0})\,p(e_{i_0})=0,\]
so that we also get commutativity of $\overline{\partial}_{\alpha,p}$ and $\overline{\partial}_{\alpha,q}$ in this case.

It remains to prove that all other cases yield a nontrivial bracket $\overline\partial_{2\alpha,r}$. Now, if $E_{2\alpha}\neq\varnothing$, then $E^+_\alpha\supseteq \{e_j\in E\mid \alpha_j\neq0\}$. Indeed, if $\alpha_j\geq 1$, then $\alpha+e_j\in\supp(I)$ would force $2\alpha=(\alpha+e_j)+(\alpha-e_j)\in\supp(I)$ and then $E_{2\alpha}=\varnothing$. In particular, we have $E_{\alpha}^+\neq\varnothing$ since $\alpha$ is inner and $\neq 0$. Assuming then that we do not have $E_{2\alpha}=E^+_\alpha=\{e_{i_0}\}$, we may choose $e_i\in E_{2\alpha}$ and $e_j\in E^+_\alpha$ with $i\neq j$. Taking $p=e_i^*$ and $q=e_{j}^*$, both in $N(\alpha)$ since $E^+_\alpha\subseteq E_\alpha$ and $E_{2\alpha}\subseteq E_\alpha$, we get
\[r(e_{i})=p(\alpha)\,q(e_{i})-q(\alpha)\,p(e_{i})=\alpha_i\cdot 0-\alpha_{j}\cdot 1=-\alpha_j\neq0,\]
since $e_j\in E^+_\alpha$. As $e_i\in E_{2\alpha}$, this gives $\overline\partial_{2\alpha,r}\neq 0$.
\end{proof}

\begin{example}\label{ex:gen-root-cases}
The diagram displays the roots $\alpha\in\mathcal R(I)$ for
$I=(x_1^4,\,x_2^4,\,x_1^2x_2)$, coloured according to \cref{prop:gen-root}: outer roots in red, inner roots satisfying
Case~(ii.1) in blue, Case~(ii.2) in green, and roots for which
$\iota_\alpha(N(\alpha))\not\cong\GA^m$ in gray. Hollow points are basis
elements that are not inner roots, and filled black points lie in $\supp(I)$.

$$
\begin{array}{c}
\begin{picture}(60,70)
\definecolor{gray1}{gray}{0.7}
\definecolor{gray2}{gray}{0.85}
\definecolor{green}{RGB}{0,124,0}

\textcolor{gray2}{\put(0,20){\vector(1,0){55}}}
\textcolor{gray2}{\put(10,10){\vector(0,1){55}}}

% y=-1 row
\put(0,10){\textcolor{gray1}{\circle*{3}}}
\put(10,10){\textcolor{gray1}{\circle*{3}}}
\put(20,10){\textcolor{gray1}{\circle*{3}}}
\put(30,10){\textcolor{red}{\circle*{3}}}
\put(40,10){\textcolor{red}{\circle*{3}}}
\put(50,10){\textcolor{gray1}{\circle*{3}}}

% y=0 row
\put(0,20){\textcolor{gray1}{\circle*{3}}}
\put(10,20){\circle{3}}
\put(20,20){\textcolor{green}{\circle*{3}}}
\put(30,20){\textcolor{blue}{\circle*{3}}}
\put(40,20){\circle{3}}
\put(50,20){\circle*{3}}

% y=1 row
\put(0,30){\textcolor{gray1}{\circle*{3}}}
\put(10,30){\textcolor{gray}{\circle*{3}}}
\put(20,30){\textcolor{blue}{\circle*{3}}}
\put(30,30){\circle*{3}}
\put(40,30){\circle*{3}}
\put(50,30){\circle*{3}}

% y=2 row
\put(0,40){\textcolor{gray1}{\circle*{3}}}
\put(10,40){\textcolor{blue}{\circle*{3}}}
\put(20,40){\textcolor{blue}{\circle*{3}}}
\put(30,40){\circle*{3}}
\put(40,40){\circle*{3}}
\put(50,40){\circle*{3}}

% y=3 row
\put(0,50){\textcolor{red}{\circle*{3}}}
\put(10,50){\textcolor{blue}{\circle*{3}}}
\put(20,50){\circle{3}}
\put(30,50){\circle*{3}}
\put(40,50){\circle*{3}}
\put(50,50){\circle*{3}}

% y=4 row
\put(0,60){\textcolor{gray1}{\circle*{3}}}
\put(10,60){\circle*{3}}
\put(20,60){\circle*{3}}
\put(30,60){\circle*{3}}
\put(40,60){\circle*{3}}
\put(50,60){\circle*{3}}

\end{picture}\\
\multicolumn{1}{c}{I=(x_1^4,\,x_2^4,\,x_1^2x_2)}
\end{array}
$$

Concretely, writing degrees in coordinates to match the diagram,
$\alpha=(1,0)$ \textup{(green)} has $m=2$,
$E_{2\alpha}=E^+_\alpha=\{e_{1}\}$, so Case~(ii.2) gives
$\iota_\alpha(N(\alpha))\cong\GA^2$; while $\alpha=(0,1)$
\textup{(gray)} has $m=2$, $E_{\alpha}=\{e_{1},e_{2}\}$,
$E^+_\alpha=\{e_{2}\}$, so neither case applies and
$\iota_\alpha(N(\alpha))\not\cong\GA^2$.
\end{example}

\subsection{The semidirect decomposition and the Levi subgroup}

Now, let $\M$ be the maximal ideal of $\KK[\XX]/I$. Since any automorphism $\varphi\in G$ stabilizes $\M$, we see that $G$ acts naturally (and linearly) on the quotient $\M^i/\M^{i+1}$ for every $i\geq 0$. Of course, this action is trivial for $i=0$, but one immediately checks for instance that the action of the torus $T\subset G$ on $\M/\M^2$ is faithful. This allows us to decompose $G$ into two parts.

\begin{proposition}\label{prop:semidirect}
With the notations from above, let $a:G\to\GL(\M/\M^2)$ be the morphism induced by the action of $G$ on $\M/\M^2$ and let $U_1:=\ker(a)$. For $\alpha\in M=\ZZ^n$, denote by $s(\alpha)$ the absolute degree of $\alpha$ (i.e. the sum of its coordinates). Then the following hold:
\begin{enumerate}[(i)]
    \item $G$ is isomorphic to the semidirect product $U_1\rtimes G_1$, where $G_1$ is the subgroup of $G$ generated by $T$, toric automorphisms, and the $U_{\alpha}$ with $s(\alpha)=0$.\label{item:semidirect}
    \item $U_1$ is a connected unipotent group generated by the $U_{\alpha,p}$ with $s(\alpha)>0$.\label{item:unipotent}
\end{enumerate}
\end{proposition}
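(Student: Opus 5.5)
The approach is to split $G$ using the grading by absolute degree $s$ and the description of $G^0$ and of $G/G^0$ from \cref{thm:dlmr}.

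\textbf{Step 1: image of $a$.} The classes $\overline x_1,\dots,\overline x_n$ form a basis of $\M/\M^2$; this uses our standing assumption $x_i\notin\supp(I)$. Since $\M$ is nilpotent, an element $\varphi\in G$ is determined modulo $\M^2$ by the images $\varphi(\overline x_i)=\sum_j c_{ij}\overline x_j+(\text{higher})$.

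We compute $a$ on the generators given by \cref{thm:dlmr}:
\begin{itemize}
\item $T$ maps to diagonal matrices, faithfully.
\item Toric automorphisms map to permutation matrices.
\item Take a root $\alpha$ and $p\in N(\alpha)$. Then $\overline\partial_{\alpha,p}$ sends $\overline x_i$ to a multiple of $\overline\XX^{e_i+\alpha}$, which has absolute degree $1+s(\alpha)$. Hence $\exp(t\overline\partial_{\alpha,p})$ acts trivially on $\M/\M^2$ when $s(\alpha)>0$.
\item When $s(\alpha)=0$, $\alpha$ must be outer, so $\alpha=e_j-e_i$ by conditions \eqref{eq:outer-i}--\eqref{eq:outer-ii}. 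In that case $\exp(t\overline\partial_{\alpha,p})$ acts by an elementary matrix. Note $s(\alpha)<0$ is impossible.
\end{itemize}

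Consequently $G_1\to\GL(\M/\M^2)$ is defined. Moreover $U_1\supseteq U_{\alpha,p}$ whenever $s(\alpha)>0$, and $a(G)=a(G_1)$ because $G$ is generated by $G^0$ together with toric automorphisms (\cref{thm:dlmr}(ii),(iii)).

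\textbf{Step 2: $a|_{G_1}$ is injective.} This is the main obstacle. The plan is to show that $G_1$ consists of automorphisms preserving the ``linear part'', i.e.\ sending each $\overline x_i$ into $\operatorname{span}(\overline x_1,\dots,\overline x_n)$. This subspace $V$ is the $s=1$ part of $A$ and generates $A$ as an algebra.

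Each generator of $G_1$ preserves the $s$-grading of $A$:
\begin{itemize}
\item torus elements and permutations do so clearly;
\item the derivations with $s(\alpha)=0$ are homogeneous of $s$-degree $0$, so their exponentials preserve it as well.
\end{itemize}
Hence every element of $G_1$ preserves $V$ and is determined by its restriction to $V$, which is exactly its image under $a$. So $G_1\cap U_1=\{1\}$. This also shows $G_1$ is closed: it equals the stabilizer of the $s$-grading, intersected with the relevant subgroup.

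\textbf{Step 3: semidirect product.} $U_1$ is normal, being a kernel. Take $\varphi\in G$. By Step 1 there is $g\in G_1$ with $a(g)=a(\varphi)$, so $\varphi g^{-1}\in U_1$. Together with Step 2 this gives $G=U_1\rtimes G_1$, proving (i).

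\textbf{Step 4: structure of $U_1$, proving (ii).} Write $\varphi\in U_1$ as $\varphi=\operatorname{id}+N$ with $N(\M^k)\subseteq\M^{k+1}$. Then $N$ is nilpotent, so $U_1$ is unipotent.

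For connectedness and generation:
\begin{itemize}
\item $\log\varphi$ is a derivation. It raises the filtration, so all its homogeneous components have $s(\alpha)>0$.
\item Thus $U_1=\exp(\mathfrak u)$, where $\mathfrak u=\bigoplus_{s(\alpha)>0}\mathfrak g_\alpha$ is a nilpotent Lie subalgebra. It is a subalgebra because $s$ is additive under brackets (\cref{rmk:lie-bracket}).
\item $\exp(\mathfrak u)$ is the image of an irreducible variety, hence connected.
\item A unipotent group is generated by the exponentials of a set of elements that generate its Lie algebra $\mathfrak u$. Here $\mathfrak u$ is spanned by the $\overline\partial_{\alpha,p}$ with $s(\alpha)>0$. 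So the $U_{\alpha,p}$ with $s(\alpha)>0$ generate a closed connected subgroup whose Lie algebra is $\mathfrak u$, and this subgroup therefore equals $U_1$.
\end{itemize}

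The step I expect to need the most care is the claim that the $s$-grading is preserved by $G_1$ and that every element of $U_1$ lies in $\exp(\mathfrak u)$. For the latter I would use the filtration argument on $\log$.
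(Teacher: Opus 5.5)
Your proof is correct. For part (i) it is essentially the paper's argument. The paper also gets injectivity of $a$ on $G_1$ from the fact that elements of $G_1$ act linearly on the span of the $\overline x_i$; your observation that $G_1$ preserves the $s$-grading is a cleaner way to say this. It also obtains $G=U_1G_1$ from \cref{thm:dlmr}, since the $U_{\alpha,p}$ with $s(\alpha)>0$ lie in $U_1$.

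One small point to fill in: your remark that $s(\alpha)<0$ is impossible requires checking that $-e_i$ is never a root. Take $d$ minimal with $x_i^d\in I$; then $\partial_{-e_i,e_i^*}$ sends $\XX^{de_i}$ outside $I$. The paper records this in \cref{remark:-e_i}. Without it, the generators of $G^0$ would not all lie in $G_1\cup U_1$, and $a(G)=a(G_1)$ would not follow.

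For part (ii) your route genuinely differs from the paper's. The paper uses structure theory:
\begin{itemize}
\item It reads off $\operatorname{Lie}(U_1)=\bigoplus_{s(\alpha)>0}\mathfrak g_\alpha$ by comparing $\mathfrak g=\mathfrak g_1\oplus\mathfrak u_1$ with the $s$-grading.
\item It gets connectedness by counting components of $U_1\rtimes G_1$, using that $G_1$ already contains the toric automorphisms, which surject onto $G/G^0$.
\item It gets unipotence because a connected normal subgroup has $T\cap U_1=\{1\}$ as a maximal torus.
\end{itemize}
You instead work directly with the $\M$-adic filtration. Since $\varphi-\mathrm{id}$ raises the filtration, $U_1$ is unipotent immediately. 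Then $\log$ identifies $U_1$ with $\mathfrak u$, which gives connectedness and the Lie algebra at once. Both proofs finish the same way: a closed connected subgroup with Lie algebra $\mathfrak u$ must equal $U_1$.

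Your version is more elementary and self-contained. It also proves explicitly that $U_1=\exp(\mathfrak u)$, which the paper later uses in the proof of \cref{prop:anick-outer}, where it is justified there only by ``connected and unipotent''. The paper's version is shorter, given standard facts about algebraic groups.
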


\begin{remark}\label{remark:-e_i}
We have $s(\alpha)\geq 0$ for every root $\alpha$. Indeed, if $s(\alpha)<0$, then $\alpha$ is outer. Since outer roots must always have a unique negative coordinate, equal to $-1$, the only possibility is $\alpha=-e_i$. But $-e_i$ is never a root of an ideal with cofinite support: if $d\geq2$ is minimal with $x_i^d\in I$, then $\partial_{-e_i,\,e_i^*}$ sends $\XX^{d e_i}$ to $d\,\XX^{(d-1)e_i}\notin I$, so it does not preserve $I$. Consequently, every root of degree-sum zero is of the form $\alpha=e_{i}-e_{j}$ ($i\neq j$), for which the only nontrivial homogeneous derivations are $\overline\partial_{\alpha,\lambda e_j^*}$ with $\lambda\in\KK^*$, satisfying $\exp(\lambda \overline\partial_{\alpha,e_j^*})(\overline x_j)=\lambda \overline x_i+\overline x_j.$ Together with the linearity of the actions of $T$ and toric automorphisms on the variables, this shows that every element of $G_1$ is a ``linear automorphism'', in the sense that it acts linearly on the $\KK$-subspace of $\KK[\XX]/I$ spanned by the variables $\overline{x}_{i}$.
\end{remark}

\begin{proof}
Let us prove that $a$ is injective on $G_1$. As it was stated above, any element of $G_1$ corresponds to a $\KK$-linear substitution of the variables $\overline{x}_i$. Assume $g\in G_1\cap\ker(a).$ Then for every $i$ we have $g(\overline{x}_i)\equiv \overline{x}_i\pmod{\M^2}$. Since the variables $\overline{x}_i$ are $\KK$-linearly independent in $\M/\M^2$, this implies that $g(\overline{x}_i)=\overline{x}_i.$ Hence $g$ is trivial.

On the other hand, consider a root $\alpha\in M$ with $s(\alpha)>0$. Then the corresponding homogeneous derivations $\overline\partial_{\alpha,p}$ send the class of $\overline x_i$ in $\M/\M^2$ to that of a polynomial of higher degree, hence in $\M^2$. This implies that the corresponding $\GA$-actions are trivial on $\M/\M^2$ and thus the subgroups $U_{\alpha,p}\subset G$ are all contained in $U_1$.

Since there are no homogeneous derivations of $\KK[\XX]/I$ with negative absolute degree, we see by \cref{thm:dlmr} that $G$ is generated by $G_1$ and $U_1$. Injectivity of $a$ on $G_1$ tells us moreover that $G_1\cap U_1=\{1\}$. Since $U_1=\ker(a)$ is normal, we get \eqref{item:semidirect}.

In order to get \eqref{item:unipotent}, we prove first that the Lie algebra $\mathfrak u_1$ of $U_1$ is $\bigoplus_{s(\alpha)>0} \mathfrak g_\alpha$. Indeed, the Lie algebra of $G$ is, again by \cref{thm:dlmr} and \cref{remark:-e_i},
$$\mathfrak{g}=\bigoplus_{s(\alpha)\geq 0} \mathfrak g_\alpha=\left(\bigoplus_{s(\alpha)=0} \mathfrak g_\alpha\right)\oplus\left(\bigoplus_{s(\alpha)>0} \mathfrak g_\alpha\right).$$
But we also know that $\mathfrak{g}=\mathfrak g_1\oplus\mathfrak u_1$ because of the semidirect product structure, and clearly $\bigoplus_{s(\alpha)=0} \mathfrak g_\alpha\subseteq\mathfrak g_1$ and $\bigoplus_{s(\alpha)>0} \mathfrak g_\alpha\subseteq\mathfrak u_1$. This yields the desired equality.

Knowing this, in order to conclude it suffices to prove that $U_1$ is connected and unipotent. Now, the number of connected components of $G=U_1\rtimes G_1$ is precisely the product of those of $U_1$ and $G_1$ since a semidirect product of algebraic groups is just a product as a variety. But since $G_1$ contains all toric automorphisms and these generate the group of connected components by \cref{thm:dlmr}, $G_1$ has at least as many connected components as $G$, so that $U_1$ must be connected. Finally, $U_1$ is unipotent since it is connected and normal in $G$ and thus a maximal torus of $U_1$ is given by $T\cap U_1=\{1\}$. This concludes the proof.
\end{proof}

We keep the notations from above. Since $U_1$ is unipotent and normal in $G$, we see that $U_1$ is contained in the unipotent radical $R$ of $G$. In particular, if $G^0=R\rtimes L$ for some reductive group $L$, we see that $L$ is naturally a subgroup of $G/U_1$ via $a$ and thus $(G/U_1)^0=(R/U_1)\rtimes L$. The following result describes more precisely the group $L$.

\begin{proposition}\label{prop:levi}
The group $L$ is isomorphic to the subgroup $L_1$ of $G$ generated by $T$ and the $U_\alpha$ with $s(\alpha)=0$ and such that both $\alpha$ and $-\alpha$ are roots, i.e.~belong to $\mathcal R(I)$.

Moreover, if we use the basis $\{\overline x_i\}_{1\leq i\leq n}$ of $\M/\M^2$ to identify $\GL(\M/\M^2)$ with $\GL_n(\KK)$, then, up to permuting the variables, $L$ is a standard Levi subgroup of $\GL_n(\KK)$. In particular, it is isomorphic to $\prod_{j=1}^r\GL_{m_j}(\KK)$, where $m_1+\cdots+m_r=n$.
\end{proposition}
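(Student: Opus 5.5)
The plan is to work entirely inside $G/U_1$, which $a$ identifies with the linear group $a(G_1)\subseteq\GL_n(\KK)$, and to describe its identity component explicitly. By \cref{prop:semidirect} and \cref{remark:-e_i}, $G_1^0$ is generated by $T$ and the $U_{e_i-e_j}$ with $e_i-e_j\in\mathcal R(I)$. Under $a$, $T$ maps onto the diagonal torus $D\subset\GL_n(\KK)$. Moreover, $\exp(\lambda\overline\partial_{e_i-e_j,e_j^*})$ sends $\overline x_j\mapsto\overline x_j+\lambda\overline x_i$ and fixes the other variables, so $U_{e_i-e_j}$ maps onto the elementary root subgroup $X_{ij}=\{1+\lambda E_{ij}\}$. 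Let $\Phi=\{(i,j)\mid e_i-e_j\in\mathcal R(I)\}$. Then $a(G_1^0)$ is the group generated by $D$ and the $X_{ij}$ with $(i,j)\in\Phi$.

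The first key step is to show that $\Phi$ is \emph{closed}: if $(i,j),(j,k)\in\Phi$ with $i\neq k$, then $(i,k)\in\Phi$. I would use the bracket formula of \cref{rmk:lie-bracket}. With $\alpha=e_i-e_j$, $p=e_j^*$, $\beta=e_j-e_k$, $q=e_k^*$, we get $r=p(\beta)q-q(\alpha)p=e_k^*$. The derivation $\partial_{e_i-e_k,e_k^*}$ preserves $I$, being a bracket of $I$-preserving derivations. It is nontrivial on the quotient because $e_k\in E_{e_i-e_k}$: indeed $e_i\notin\supp(I)$ by the standing assumption. So $(i,k)\in\Phi$. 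Alternatively, one can note directly that $\mathfrak g_{e_i-e_k}\neq 0$ whenever the condition \eqref{eq:outer-iii} holds, and that condition transfers through the composition of substitutions.

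Once $\Phi$ is closed, the standard theory of $\GL_n$ applies. The group $P=\langle D,X_{ij}:(i,j)\in\Phi\rangle$ is a connected closed subgroup containing $D$, whose Lie algebra is $\mathfrak d\oplus\bigoplus_\Phi\KK E_{ij}$. Its unipotent radical is generated by the $X_{ij}$ with $(i,j)\in\Phi$ but $(j,i)\notin\Phi$. Its Levi factor containing $D$ is $L_1'=\langle D, X_{ij}:(i,j),(j,i)\in\Phi\rangle$. To see this, let $\Phi^s$ be the symmetric part of $\Phi$. Closedness makes the relation $i\sim j\iff (i,j)\in\Phi^s$ (together with $i\sim i$) an equivalence relation. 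Permuting the variables so that the classes are consecutive blocks of sizes $m_1,\dots,m_r$, $L_1'$ becomes the block-diagonal $\prod\GL_{m_j}(\KK)$, since each block is generated by its diagonal torus and all its elementary root subgroups. The unipotent part $\Phi\setminus\Phi^s$ spans an ideal of nilpotent matrices, by closedness and because no pair with both orders occurs there.

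Finally I would lift back to $G$. Put $L_1=\langle T,U_\alpha:\pm\alpha\in\mathcal R(I),\ s(\alpha)=0\rangle\subseteq G_1$. Then $a|_{L_1}$ is injective (as in the proof of \cref{prop:semidirect}) with image $L_1'$, so $L_1$ is reductive. Further, $G^0=U_1\rtimes G_1^0$ and $G_1^0\cong L_1\ltimes R'$ with $R'$ unipotent and normal. Hence $U_1R'$ is a connected normal unipotent subgroup with reductive quotient $L_1$, so it is the unipotent radical $R$, and $L\cong G^0/R\cong L_1$. The main obstacle I expect is the closedness of $\Phi$ and the careful verification that the non-symmetric part generates a normal unipotent subgroup of $P$. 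If the bracket argument leaves any gap about nontriviality, I would check the combinatorial condition \eqref{eq:outer-iii} for $e_i-e_k$ directly from those for $e_i-e_j$ and $e_j-e_k$.
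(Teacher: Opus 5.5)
Your proposal is correct and follows essentially the same route as the paper: realize $G_1^0$ inside $\GL_n(\KK)$ via $a$, use closedness of the degree-sum-zero roots to split off the non-symmetric ones as a normal unipotent subgroup, identify the symmetric ones with a block-diagonal $\prod_j\GL_{m_j}(\KK)$, and lift back through $G^0=U_1\rtimes G_1^0$. The differences are only in the tools: you prove closedness directly from the bracket formula, and you replace the paper's appeal to Bourbaki's result on closed sets of roots by the (correct) observation that $\Phi\setminus\Phi^s$ is a strict partial order on indices; in addition, your equivalence relation on indices, rather than on $\Sigma_L/\{\pm1\}$, gives the block structure immediately, without the paper's chain-of-commutators argument.
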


\begin{proof}
Since $G/U_1$ is isomorphic to $G_1$, we may work with this group instead. Moreover, this group can be seen as a subgroup of $\GL_n(\KK)$ via its action on $\M/\M^2$ with the basis fixed above. This identifies $T$ with the subgroup of diagonal matrices and hence we may identify the corresponding lattices of characters and cocharacters. Now, recall that for $\alpha$ with $s(\alpha)=0$ we have $\alpha=e_i-e_j$ for some $i\neq j$ and that an element in $U_\alpha$ sends the class of $\overline x_j$ in $\M/\M^2$ to that of $\lambda\overline x_i+\overline x_j$ for some $\lambda\in\KK$, while fixing the classes of $\overline x_k$ for $k\neq j,$ see \Cref{remark:-e_i}. This tells us that the root group $U_\alpha\subset G_1$ can be identified with the classical root group of $\GL_n(\KK)$ associated to the same root $\alpha=e_i-e_j$, which is given by matrices of the form $\mathrm{Id}+\lambda E_{ij}$ for some $\lambda\in\KK$. We will abusively denote this group by $U_\alpha$ as well. Thus, we may and will assume henceforth that $G_1$ sits inside $\GL_n(\KK)$ and that there exist a certain subset
$$\Sigma\subset\{e_i-e_j,\,i\neq j,\,1\leq i\leq n,\,1\leq j\leq n\}\subset M,$$
such that $G_1^0$ is generated by the diagonal torus $T$ and the root groups $U_\alpha$ of $\GL_n(\KK)$ for $\alpha\in \Sigma$. In particular, if we define $\Sigma_L$ as $\Sigma\cap-\Sigma$, then $L_1$ is the subgroup generated by $T$ and the $U_{\alpha}$ with $\alpha\in \Sigma_L$.

With all these assumptions, it is clear that $L_1$ is contained in $G_1^0$. Let $U_0$ be the unipotent radical of $G_1^0$. In order to prove the first assertion, it will suffice to prove that $G_1^0=U_0\rtimes L_1$. We will prove then first that $U_0$ and $L_1$ generate $G_1^0$, which we will do by proving that $U_\alpha$ lies in $U_0$ for every $\alpha\in \Sigma\smallsetminus \Sigma_L$.

Let us recall some basic facts about root groups in $\GL_n(\KK)$. As one can check by direct computations on matrices, if $\alpha=e_i-e_j$, then $U_\alpha$ is normalized by $T$ and commutes with $U_\beta$ for $\beta=e_k-e_\ell$ unless one of the following holds:
\begin{enumerate}[(i)]
    \item $k=j$ and $\ell=i$, i.e.~$\alpha=-\beta$. In this case $U_\alpha$ and $U_\beta$ generate, along with a suitable subgroup of the torus $T$, a copy of $\GL_2(\KK)$ that acts naturally on the $i$-th and $j$-th variables.
    \item $k=j$ and $\ell\neq i$. In this case, the commutator subgroup $[U_\alpha,U_\beta]$ is $U_\gamma$ with $\gamma=\alpha+\beta=e_i-e_\ell$.
    \item $k\neq j$ and $\ell=i$. In this case, the commutator subgroup $[U_\alpha,U_\beta]$ is $U_\gamma$ with $\gamma=\alpha+\beta=e_k-e_j$.
\end{enumerate}
Note that the last two cases are the only ones where $\alpha+\beta$ is a nontrivial root of $\GL_n(\KK)$. We will only use these two in what follows.

Assume that $\alpha\in \Sigma\smallsetminus \Sigma_L$ and $\beta\in\Sigma$. The computations above tell us then that either $U_\alpha$ and $U_\beta$ commute or they generate $U_{\alpha+\beta}$. We claim that in the latter case we have that $\alpha+\beta\in \Sigma\smallsetminus \Sigma_L$ as well. Indeed, since $U_\alpha,U_\beta\subset G_1^0$, we have $U_{\alpha+\beta}\subset G_1^0$ and hence $\alpha+\beta\in \Sigma$. On the other hand, if $\gamma=\alpha+\beta\in \Sigma_L$, then $-\gamma\in \Sigma$ by definition of $\Sigma_L$. And since $\beta+(-\gamma)=-\alpha$, we finally get via the same computations that $U_{-\alpha}\subset G_1^0$ and thus $-\alpha\in \Sigma$. So $\alpha\in \Sigma_L$ and we get a contradiction, proving our claim.

With this, \cite[Ch.~VI, \S1.7, Prop.~22]{BourbakiLie46} tells us that $\Sigma\smallsetminus \Sigma_L$ is contained in the set of positive roots of $\GL_n(\KK)$ for a suitable ordering; that is, up to permuting the variables, every $U_\alpha$ with $\alpha\in\Sigma\smallsetminus \Sigma_L$ consists of unipotent upper triangular matrices. Hence the subgroup $U':=\langle U_\alpha\mid\alpha\in\Sigma\smallsetminus \Sigma_L\rangle$ of $G_1^0$ is unipotent. Moreover, $U'$ is normalized by $T$ and, by the arguments above, by every $U_\beta$ with $\beta\in\Sigma$. Since $T$ and these $U_\beta$ generate $G_1^0$, $U'$ is normal in $G_1^0$, hence contained in its unipotent radical $U_0$. In particular $U_\alpha\subset U_0$ for every $\alpha\in\Sigma\smallsetminus\Sigma_L$, as we wanted.

Knowing that $L_1$ and $U_0$ generate $G_1^0$, we claim that $L_1$ is reductive. Assuming this, since $U_0$ is normal in $G_1^0$, we see that $U_0\cap L_1$ is a normal unipotent subgroup of $L_1$, hence of dimension 0. But there are no nontrivial finite unipotent groups since $\KK$ has characteristic 0. This proves that $G_1^0=U_0\rtimes L_1$ assuming the claim. Let us prove then that $L_1$ is a standard Levi subgroup, which clearly implies that it is reductive.

In order to prove this, note that if $\Sigma_L$ is empty, then $L_1=T$ and we are done. Otherwise, define an equivalence relation in $\Sigma_L/\{\pm 1\}$ by relating $\pm e_i\mp e_j$ with $\pm e_k \mp e_\ell$ if either $i=k$, $i=\ell$, $j=k$ or $j=\ell$ (and extending for transitivity). This immediately implies that $\pm \alpha$ is related with every $\pm\beta$ such that either $\alpha+\beta\in \Sigma$ or $\alpha-\beta\in \Sigma$, in which case we see that such a root is also in $\Sigma_L$ using the same argument as above. Consider now the class of $\pm\alpha=\pm e_i\mp e_j$ and the subset of $\{1,2,\ldots,n\}$ of indexes $k$ such that a $\pm\beta$ in the class has a nontrivial $k$-th coordinate. Up to permuting the variables, we may assume that $\alpha=\pm e_1 \mp e_2$ and that this set is $\{1,2\ldots,m\}$ for some $2\leq m\leq n$.

We claim then that the whole $\GL_m(\KK)$ is contained in $L_1$. Indeed, by definition, for every $3\leq j\leq m$ there is a sequence of elements in the class
$$\pm e_{i_1}\mp e_{i_2},\pm e_{i_2}\mp e_{i_3}, \ldots, \pm e_{i_{r-1}}\mp e_{i_r},$$
such that $i_1=1$ and $i_2=2$, and $i_r=j$. Then, by the same computations from above, we see that $U_{e_1-e_j}$ and $U_{e_j-e_1}$ are generated by the $U_{e_{i_j}-e_{i_{j+1}}}$ and the $U_{e_{i_{j+1}}-e_{i_j}}$, which are all in $L_1$ by definition. Thus $U_{e_1-e_j}$ and $U_{e_j-e_1}$ are both contained in $L_1$ for any $1\leq j\leq m$. And the same computation one more time gives us that $U_{e_i-e_j}$ and $U_{e_j-e_i}$ are contained in $L_1$ for any $1\leq i<j\leq m$, proving the claim.

We conclude the proof of the second statement by iterating this argument with the remaining coordinates. Since we started with an equivalence relation, the partition $n=m_1+m_2+\cdots+m_r$ it generates yields the desired product inside $L_1$. But this connected subgroup has the expected Lie algebra, so $L_1$ cannot be bigger than this product.
\end{proof}

Let us finish this section by proving that all of these Levi groups can appear in concrete cases. Given a standard Levi subgroup $L$ of $\GL_n(\KK)$, consider the corresponding standard parabolic subgroup $P_L=\langle L,U_n\rangle$, where $U_n\subseteq \GL_n(\KK)$ denotes the subgroup of unipotent upper triangular matrices.

\begin{proposition}\label{prop:levi exist}
There exists a monomial ideal $I$ such that the automorphism group $G$ of the corresponding monomial algebra satisfies $G_1^0\simeq L$. The same holds if we replace $L$ by $P_L$. In particular, in both cases the Levi subgroup of $G$ is $L$.
\end{proposition}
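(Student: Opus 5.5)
The plan is to build $I$ from weighted-degree conditions, so that the roots of degree-sum zero can be read off from a single monotonicity property of the weights. Fix the partition $n=m_1+\cdots+m_r$ with $L=\prod_j\GL_{m_j}(\KK)$, and let $B_1,\dots,B_r$ be the corresponding consecutive blocks of indices. By \cref{remark:-e_i} and the proof of \cref{prop:levi}, $G_1^0$ is the subgroup of $\GL_n(\KK)$ generated by the diagonal torus and the $U_{e_i-e_j}$ for those $e_i-e_j\in\mathcal R(I)$.

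The first step is to make the root condition explicit. For $\alpha=e_i-e_j$ we have $E_\alpha=\{e_j\}$ automatically, since $\alpha+e_j=e_i\notin\supp(I)$. Hence, by conditions \eqref{eq:outer-i}--\eqref{eq:outer-iii}, $e_i-e_j\in\mathcal R(I)$ if and only if
\[
\beta\in\supp(I),\ \beta_j\geq1 \ \Longrightarrow\ \beta+e_i-e_j\in\supp(I).
\]
In words, replacing one factor $x_j$ by $x_i$ keeps every monomial of $I$ inside $I$. So the task is purely combinatorial: construct $\supp(I)$ so that this exchange property holds exactly for the pairs $(i,j)$ in $\Sigma_{P_L}$, the roots of $P_L$, and separately exactly for the pairs in $\Sigma_L$.

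For $P_L$, I would take positive weights $w(i)=r+1-k$ for $i\in B_k$, a large integer $D$ divisible by every $w(i)$, and
\[
\supp(I)=\Big\{a\in\ZZ^n_{\geq0} \;\Big|\; \textstyle\sum_i w(i)a_i\geq D\Big\}.
\]
This set is an upper set, so $I$ is monomial. It is cofinite because all weights are positive, and $x_i\notin I$ once $D>\max_i w(i)$.
\begin{enumerate}[(a)]
\item If $w(i)\geq w(j)$, the exchange does not decrease the weighted degree, so $e_i-e_j$ is a root.
\item If $w(i)<w(j)$, the monomial $x_j^{D/w(j)}$ has weighted degree exactly $D$. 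After the exchange its weighted degree is $D-(w(j)-w(i))<D$, so $e_i-e_j$ is not a root.
\end{enumerate}
The roots are therefore exactly the $e_i-e_j$ with $i$ in a block of index at most that of $j$. These are the roots of $L$ together with the upper-triangular ones, so $G_1^0=P_L$.

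For $L$ itself, I would symmetrize by taking
\[
\supp(I)=\{a \mid w(a)\geq D\}\cap\{a \mid w'(a)\geq D\},
\]
where $w'$ is the reversed weight $w'(i)=k$ for $i\in B_k$. This is again an upper set and is cofinite, since each of the two sets is cofinite. Exchanges within a block preserve both weighted degrees, so the roots $e_i-e_j$ with $i,j$ in the same block survive.

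The main obstacle is to exhibit, for each cross-block pair, a witness monomial $\beta$ that lies in both regions but leaves one of them after the exchange.
\begin{itemize}
\item I would first try taking $\beta$ on the boundary $w'(\beta)=D$ with $w(\beta)$ as small as possible, using the variable $x_j$, when $w'(j)>w'(i)$. The exchange then drops $w'$ below $D$.
\item One must check that this $\beta$ also satisfies $w(\beta)\geq D$. If needed, I would replace the common threshold $D$ by two thresholds $D\ll D'$, or rescale one weight vector, so that the minimal $w'$-boundary points are automatically deep inside the $w$-region.
\end{itemize}
This produces $\Sigma=\Sigma_L$, hence $G_1^0=L$.

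In both constructions, \cref{prop:levi} then identifies the Levi subgroup of $G$ with $L$. The point is that $\Sigma\cap-\Sigma$ is exactly the set of roots of $L$.
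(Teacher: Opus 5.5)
Your $P_L$ construction is correct. It differs from the paper's, which uses all monomials of total degree $\ell_k$ in the first $k$ blocks with $\ell_1<\cdots<\ell_r$, and your single weighted threshold is arguably cleaner, since the root test becomes a one-line monotonicity check. The $L$ case, however, is not proved. The cross-block witness, which you yourself flag as the main obstacle, is never produced, and both routes you suggest fail.

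\begin{itemize}
\item \textbf{First attempt.} Taking $\beta$ on $\{w'=D\}$ with $w(\beta)$ \emph{as small as possible} pushes $\beta$ onto the block $B_r$, where $w=1$ and $w'=r$. Then $w(\beta)\approx D/r<D$ for $r\geq2$, so $\beta\notin\supp(I)$ and the very check you mention fails.
\item \textbf{Fallback.} Using unequal thresholds $D\ll D'$, or rescaling a weight, is worse. Putting the minimal-$w$ points of the $w'$-boundary into $\{w\geq D\}$ forces essentially $D'\geq rD$. Since $w(a)\geq s(a)\geq w'(a)/r$, this gives $\{w'\geq D'\}\subseteq\{w\geq D\}$, so the intersection collapses to a single half-space. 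Every exchange $x_j\to x_i$ with $w'(i)>w'(j)$ then becomes a root, and you get the opposite standard parabolic instead of $L$.
\item \textbf{Missing direction.} You only treat pairs with $w'(j)>w'(i)$. Pairs with $w(j)>w(i)$ need a point on the $w$-boundary lying inside the $w'$-region, which is exactly what the fallback destroys.
\end{itemize}

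The missing observation is that the common threshold already works. Because $w+w'=(r+1)s$, you can sit on one boundary and pad with a variable of extreme weight to stay well inside the other region. Let $k_i,k_j$ be the blocks of $i,j$ and take $D\geq r+1$.
\begin{itemize}
\item If $k_i<k_j$, choose $u\in B_1$ and $\beta=e_j+(D-k_j)e_u$. Then $w'(\beta)=D$ and $w(\beta)=rD+r+1-(r+1)k_j\geq rD-r^2+1\geq D$. After the exchange, $w'(\beta+e_i-e_j)=D-(k_j-k_i)<D$.
\item If $k_i>k_j$, choose $v\in B_r$ and $\beta=e_j+(D-r-1+k_j)e_v$. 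Then $w(\beta)=D$ and $w'(\beta)=rD-r(r+1)+(r+1)k_j\geq rD-r^2+1\geq D$. After the exchange, $w(\beta+e_i-e_j)=D-(k_i-k_j)<D$.
\end{itemize}
In both cases $\beta_j=1$. With these witnesses $\Sigma=\Sigma_L$ and the rest of your argument goes through. Alternatively, the paper's $I_L$, generated by all monomials of degree $\ell_k>1$ in the variables of $B_k$, has the immediate witness $x_j^{\ell_{k_j}}\mapsto \ell_{k_j}x_j^{\ell_{k_j}-1}x_i\notin I_L$.
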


\begin{proof}
Consider the partition $n=m_1+m_2+\cdots+m_r$ such that $L=\prod_{k=1}^r\GL_{m_k}(\KK)$. Rename the variables $x_1,\ldots,x_n$ as $x_{i,j}$ with $1\leq i\leq r$ and $1\leq j\leq m_i$, ordered lexicographically. Choose integers $1<\ell_1<\ell_2<\cdots<\ell_r$ and, for $1\leq k\leq r$, consider the following sets of monomials:
$$B_{L,k}=\left\{\prod_{j=1}^{m_k}x_{k,j}^{\alpha_j}\mid \sum_{j=1}^{m_k}\alpha_j=\ell_k\right\},$$
that is, all the monomials of total degree $\ell_k$ in the variables $x_{k,j}$; and
$$B_{P_L,k}=\left\{\prod_{i=1}^{k}\prod_{j=1}^{m_i}x_{i,j}^{\alpha_{i,j}}\mid \sum_{i=1}^k\sum_{j=1}^{m_i}\alpha_{i,j}=\ell_k\right\},$$
that is, all the monomials of total degree $\ell_k$ in the variables $x_{i,j}$ with $i\leq k$. Define the monomial ideals $I_{L}=\langle B_{L,k}\mid 1\leq k\leq r\rangle$ and $I_{P_L}=\langle B_{P_L,k}\mid 1\leq k\leq r\rangle$. We claim that the automorphism groups $\Aut(\KK[\XX]/I_L)$ and $\Aut(\KK[\XX]/I_{P_L})$ have $L$ and $P_L$ as respective neutral connected components of their ``linear part''.

We already know that the Lie algebras of $L$, $P_L$, $\Aut(\KK[\XX]/I_L)$ and $\Aut(\KK[\XX]/I_{P_L})$ decompose as graded direct sums of root derivations (and the Lie algebra $\mathfrak{g}_0$ of the torus $T$), and that the corresponding degrees for the subgroup of ``linear automorphisms'' (i.e.~$G_1$) are those $\alpha$ with $s(\alpha)=0$. Moreover, since all such $\alpha$ are outer degrees, the corresponding root subspace $\mathfrak{g}_\alpha$ has dimension 1 and corresponds to the Lie algebra of an actual root subgroup. By the correspondence between subgroups of $\GL_n$ and Lie subalgebras of $\mathfrak{gl}_n$ \cite[Section~13.1, Theorem]{Humphreys}, it suffices then to check, for every such $\alpha$, that any derivation in $\mathfrak{g}_\alpha$ (equivalently, all of them) is well-defined on $\KK[\XX]/I_L$ (resp.~$\KK[\XX]/I_{P_L}$) if and only if $\alpha$ is a root of $L$ (resp.~$P_L$). 

Considering the renaming of the variables we did above, we note that the roots of $\GL_n(\KK)$ have the form $e_{i,j}-e_{k,l}$ for some $1\leq i,k\leq r$, $1\leq j\leq m_i$ and $1\leq l\leq m_k$, so that the roots of $L$ are precisely those of the form $e_{i,j}-e_{i,k}$, while those of $U_n$ are those of the form $e_{i,j}-e_{k,l}$ with $i\leq k$ and $j<l$ if $i=k$.

Let $\alpha=e_{i,j}-e_{k,l}$ be a root of $\GL_n(\KK)$. Then the derivation $\partial_{\alpha,e_{k,l}^*}$ sends $x_{k,l}$ to $x_{i,j}$ and every other variable to $0$. More precisely, on any given monomial, $\partial_{\alpha,e_{k,l}^*}$ replaces one appearance of $x_{k,l}$ by $x_{i,j}$ (up to scalar), and it gives $0$ if there are none. Let us go case by case then.
\begin{itemize}
    \item If $i=k$, then $\alpha$ is a root of $L$ and hence of $P_L$, so we must prove that $\partial_{\alpha,e_{k,l}^*}$ preserves both $I_L$ and $I_{P_L}$. But this is evident by definition of $B_{L,k}$ and $B_{P_L,k}$.
    \item If $i<k$, then $\alpha$ is not a root of $L$, but it is a root of $U_n$, hence of $P_L$. By definition of $B_{P_L,k}$, it is evident that $\partial_{\alpha,e_{k,l}^*}$ preserves $I_{P_L}$. On the other hand, any element in $B_{L,k}$ containing the variable $x_{k,l}$ is sent to an element whose total degree on the variables $x_{k,t}$ for $1\leq t\leq m_k$ is $\ell_{k}-1$, while its degree in the variables of any other block is $1<\ell_s$ (recall $\ell_s\geq2$), and hence is not in $I_L$.
    \item If $i>k$, then $\alpha$ is neither a root of $L$ nor of $P_L$. The argument is essentially the same as above: since $x_{i,j}\not\in B_{P_L,k}$, any element in $B_{L,k}\subseteq B_{P_L,k}$ is sent to an element whose total degree on the variables $x_{s,t}$ for $1\leq s\leq k$ and $1\leq t\leq m_s$ is $\ell_{k}-1$; hence its total degree on the variables of the blocks $s\leq s'$ is:\\
    \begin{center}
    \begin{tabular}{rll}
        $0$ & $<\ell_{s'}$ & for $s'<k$;\\
        $\ell_k-1$ & $<\ell_{s'}$ & for $k\leq s'<i$;\\
        $\ell_k$ & $<\ell_{s'}$ & for $s'\geq i$.
    \end{tabular}
    \end{center}
    Hence the image is divisible by no element of $B_{P_L,s'}$, and hence is not in $I_{P_L}\supseteq I_L$.
\end{itemize}

\end{proof}

\section{The component group and the realization of finite groups}\label{sec:fin-groups}
The component group $G/G^0$ is finite and generated by toric automorphisms, as established in \cref{thm:dlmr}. It is therefore natural to ask which finite groups occur as $G/G^0$. The aim of this section is to prove that every finite group does, using a construction based on Frucht's theorem on graph automorphisms.

By a graph we mean a finite simple undirected graph $H=(V,E)$: a finite
set $V$ of vertices together with a set $E\subseteq\binom{V}{2}$ of
two-element subsets of $V$, the edges; thus $H$ has neither loops nor
multiple edges. An automorphism of $H$ is a bijection $\sigma\colon V\to
V$ such that $\{i,j\}\in E$ if and only if $\{\sigma(i),\sigma(j)\}\in E$. Under
composition the automorphisms of $H$ form a group $\Aut(H)$, a subgroup of the
symmetric group $\Sym(V)$; when $V=\{1,\dots,n\}$ we view $\Aut(H)\subseteq S_n$.
Our construction rests on the following classical realization theorem.

\begin{theorem}[Frucht, {\cite[Sections~2--3]{Frucht1939}}]\label{thm:frucht}
For every finite group $\Gamma$ there exists a finite simple graph $H$ with
$\Aut(H)\cong\Gamma$.
\end{theorem}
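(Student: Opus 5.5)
This is Frucht's classical theorem, which the paper cites rather than proves. Still, the standard proof can be sketched as follows; it is the route I would take. The plan is to start from a Cayley digraph of $\Gamma$, whose colour-preserving automorphism group is exactly $\Gamma$, and then replace the colours and orientations by rigid undirected gadgets.

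\textbf{Step 1: the coloured Cayley digraph.} If $\Gamma$ is trivial, a single vertex works. Otherwise fix a generating set $\{g_1,\dots,g_k\}$ of $\Gamma$ with $1\notin\{g_i\}$. Form the directed graph $D$ with vertex set $\Gamma$ and an arc $h\to hg_i$ of colour $i$ for each $h\in\Gamma$ and each $i$.

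Left multiplication by any $\gamma\in\Gamma$ preserves arcs and colours. Conversely, let $\phi$ be a colour-preserving automorphism of $D$ and put $\gamma=\phi(1)$. Since $\phi$ preserves coloured arcs, we get $\phi(hg_i)=\phi(h)g_i$ for all $h$ and $i$. Because the $g_i$ generate $\Gamma$, induction on word length gives $\phi(h)=\gamma h$ for every $h$. So the group of coloured-digraph automorphisms of $D$ is $\Gamma$, acting by left translations.

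\textbf{Step 2: gadget replacement.} Replace each arc $h\to hg_i$ of colour $i$ by an undirected path $h - a - b - hg_i$ through two new vertices $a,b$. Attach to $a$ a pendant path of length $2i-1$ and to $b$ a pendant path of length $2i$. Equivalently, use any family of pairwise distinguishable asymmetric tails: they must encode both the colour $i$ and the direction, since the tail at the end near $h$ differs from the tail at the end near $hg_i$. Call the resulting simple graph $H$. Every automorphism of $D$ clearly extends to $H$, which gives an injection $\Gamma\hookrightarrow\Aut(H)$.

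\textbf{Step 3 (main obstacle): every automorphism of $H$ comes from $D$.} This needs a combinatorial invariant that separates the three kinds of vertices:
\begin{itemize}
\item the original vertices, of degree $2k$ (this requires $k\ge 2$; if $\Gamma$ is cyclic, add a redundant generator such as $g_2=g_1^{-1}$ or $g_1^2$ and handle small orders separately);
\item the subdivision vertices $a,b$, of degree exactly $3$;
\item the tail vertices, of degree at most $2$, with leaves of degree $1$.
\end{itemize}
If necessary, lengthen all tails uniformly so that $2k\ne 3$ and the tails cannot be confused with the subdivided arcs. Leaves and tail lengths are preserved by automorphisms. So an automorphism of $H$ maps each degree-$3$ vertex with a tail of length $\ell$ to one with the same tail length. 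This forces arcs to go to arcs of the same colour with the same orientation, and hence yields a colour-preserving automorphism of $D$. The automorphism is determined by its action on $\Gamma$, since the gadgets are rigid once their endpoints are fixed. Therefore $\Aut(H)\cong\Gamma$.

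The delicate part is the bookkeeping of degrees in small cases, $k=1$ and $|\Gamma|\le 2$. I would treat these separately: $\Gamma=1$ by a point, $\Gamma=\ZZ/2$ by a single edge, and any remaining cyclic group by using two generators.
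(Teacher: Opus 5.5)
Your sketch is correct and is essentially Frucht's original argument: take the coloured Cayley digraph and replace each coloured arc by an asymmetric gadget that encodes both colour and direction. The paper does not reproduce a proof but invokes exactly this construction by citing Frucht. One small clean-up: the condition $2k\neq 3$ holds automatically because $2k$ is even, and lengthening tails changes no vertex degree, so that remark is vacuous. The only genuine degree collision is $k=1$, which you already handle by adding a redundant generator (or by treating $\Gamma$ of order at most $2$ separately).
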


Recall that toric automorphisms are the
automorphisms $\overline{x}_k\mapsto\overline{x}_{\sigma(k)}$ for $\sigma\in S_n$
preserving $I$, forming a subgroup $\Aut(S,I)\subseteq G$. Since by assumption $x_i\not\in I$, the classes $\overline{x}_1,\dots,\overline{x}_n$ are nonzero and pairwise distinct, so $\sigma$ is determined by the automorphism, and $\Aut(S,I)$ is isomorphic to
$$\Sym(I):=\{\sigma\in S_n\mid\sigma(I)=I\},$$
the group of variable permutations preserving the minimal monomial generating set of $I$.

Fix a graph $H$ on $V=\{1,\dots,n\}$ with edge set $E$ and an integer
$d\geq 3$. Let $I_H\subset\KK[\XX]$ be the ideal generated by the
edge monomials $x_ix_j$ with $\{i,j\}\in E$ together with the pure powers
$x_1^d,\dots,x_n^d$:
\begin{equation}\label{eq:IH}
I_H=\big(x_1^d,\dots,x_n^d,\;x_ix_j\mid\{i,j\}\in E\big).
\end{equation}
The edge monomials are squarefree of degree two and the powers
$x_k^d$ are supported on a single variable, so none divides another and
together they form the minimal monomial generating set of $I_H$. Since $x_k^d\in I_H$ for all $k$, $I_H$
has cofinite support.

\begin{lemma}\label{lem:frucht-ideal}
$\Aut(S,I_H)\cong\Aut(H)$.
\end{lemma}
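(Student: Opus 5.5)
We use the identification recalled just before the statement: $\Aut(S,I_H)\cong\Sym(I_H)$, the group of permutations $\sigma\in S_n$ with $\sigma(I_H)=I_H$. It therefore suffices to prove that $\Sym(I_H)=\Aut(H)$ as subgroups of $S_n$, with $V=\{1,\dots,n\}$.

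The key observation is that a permutation of the variables is a graded automorphism of $\KK[\XX]$ sending monomials to monomials. Hence $\sigma(I_H)=I_H$ holds if and only if $\sigma$ permutes the minimal monomial generating set of $I_H$. Indeed, $\sigma(I_H)$ is again a monomial ideal, and its minimal generating set is the image under $\sigma$ of that of $I_H$. Minimal monomial generating sets are unique, so equality of the ideals is equivalent to equality of these sets. The minimal generating set was identified above as
\[
\mathcal G=\{x_k^d\mid 1\le k\le n\}\cup\{x_ix_j\mid \{i,j\}\in E\}.
\]

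For the inclusion $\Aut(H)\subseteq\Sym(I_H)$, take $\sigma\in\Aut(H)$. Then $\sigma(x_k^d)=x_{\sigma(k)}^d\in\mathcal G$, and for $\{i,j\}\in E$ we have $\sigma(x_ix_j)=x_{\sigma(i)}x_{\sigma(j)}$ with $\{\sigma(i),\sigma(j)\}\in E$. So $\sigma(\mathcal G)\subseteq\mathcal G$, and since $\mathcal G$ is finite and $\sigma$ is injective, $\sigma(\mathcal G)=\mathcal G$.

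For the inclusion $\Sym(I_H)\subseteq\Aut(H)$, take $\sigma\in\Sym(I_H)$, so that $\sigma(\mathcal G)=\mathcal G$. Since $\sigma$ preserves total degree and $d\ge3>2$, $\sigma$ must map the degree-two part of $\mathcal G$, namely the edge monomials, bijectively onto itself. Thus $\{i,j\}\in E$ implies $\{\sigma(i),\sigma(j)\}\in E$. Applying the same argument to $\sigma^{-1}$, which also preserves $I_H$, gives the reverse implication, so $\sigma\in\Aut(H)$.

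There is no real obstacle here. The only step needing care is the justification that preserving the ideal is equivalent to permuting its minimal generators. This rests on the uniqueness of the minimal monomial generating set together with the fact that permutations of the variables preserve divisibility among monomials. The hypothesis $d\ge3$ is what separates edge monomials from pure powers by degree; with $d=2$, a pure square $x_k^2$ would also have degree two, and the degree argument would have to be replaced by the observation that squarefree monomials map to squarefree ones.
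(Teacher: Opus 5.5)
Your proof is correct and follows the paper's argument: identify $\Aut(S,I_H)$ with $\Sym(I_H)$, note that preserving $I_H$ amounts to permuting its minimal monomial generators, and use $d\geq 3$ to separate the edge monomials from the pure powers by degree. The only difference is that you spell out why preserving the ideal is equivalent to permuting its minimal generators and check both inclusions explicitly, which the paper leaves implicit.
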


\begin{proof}
A permutation $\sigma\in S_n$ lies in $\Sym(I_H)$ if and only if it preserves
the minimal generating set of $I_H$. Since the edge monomials have degree two
and the pure powers have degree $d\geq 3$, $\sigma$ must preserve each type
separately; it preserves the pure powers automatically, and preserves the edge
monomials if and only if it preserves $E$, that is, if and only if
$\sigma\in\Aut(H)$. Thus $\Sym(I_H)=\Aut(H)$ and the result follows from the
isomorphism $\Aut(S,I_H)\cong\Sym(I_H)$.
\end{proof}

Recall that, for $i\neq j$, since $e_i-e_j$ is outer, the unique (up to scalar) homogeneous derivation of $\KK[\XX]$ of degree $e_i-e_j$ is $\partial_{e_i-e_j,\,e_j^*}$.

\begin{lemma}\label{lem:no-bidir}
Let $d\geq3$. For every $i\neq j$ the derivation $\partial_{e_i-e_j,\,e_j^*}$ does not preserve $I_H$.
\end{lemma}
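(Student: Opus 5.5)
We exhibit a single generator of $I_H$ whose image under the derivation lies outside $I_H$. Criterion \eqref{eq:outer-iii} says that preservation can be tested on generators. Set $\alpha=e_i-e_j$, so that $\partial:=\partial_{\alpha,e_j^*}$ acts by $\XX^m\mapsto m_j\,\XX^{m+e_i-e_j}$; that is, it replaces one occurrence of $x_j$ by $x_i$. The natural candidate is the pure power $x_j^d\in I_H$:
$$\partial(x_j^d)=d\,x_ix_j^{d-1}.$$
Since $d\neq0$ in characteristic zero, it suffices to show that the monomial $x_ix_j^{d-1}$ is not in $I_H$.

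A monomial lies in the monomial ideal $I_H$ if and only if it is divisible by one of the minimal generators listed in \eqref{eq:IH}. We check the two types of generators in turn.

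First, the pure powers. The monomial $x_ix_j^{d-1}$ is not divisible by $x_i^d$, because $d\geq 3>1$. It is not divisible by $x_j^d$, because its $x_j$-exponent is $d-1$. It is not divisible by any $x_k^d$ with $k\neq i,j$, because $x_k$ does not occur in it.

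Second, the edge monomials, which are squarefree of the form $x_kx_\ell$ with $k\neq\ell$. Any such monomial dividing $x_ix_j^{d-1}$ must have $\{k,\ell\}=\{i,j\}$. So divisibility happens only when $\{i,j\}\in E$.

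The only real obstacle is therefore the case where $\{i,j\}$ is an edge, since then $x_ix_j\mid x_ix_j^{d-1}$ and the pure power $x_j^d$ does not work. In that case we switch to the edge generator $x_ix_j$ itself:
$$\partial(x_ix_j)=x_i^2.$$
The monomial $x_i^2$ is not divisible by any edge monomial, since those are squarefree in two distinct variables. It is not divisible by any pure power $x_k^d$, since $d\geq3>2$. Hence $x_i^2\notin I_H$.

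This is exactly where the hypothesis $d\geq3$ enters: with $d=2$ we would have $x_i^2\in I_H$ and the argument would fail. We conclude that $\partial(I_H)\not\subseteq I_H$ in both cases, which proves the lemma.
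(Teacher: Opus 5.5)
Your proof is correct and is essentially the paper's argument: you split on whether $\{i,j\}$ is an edge, using $x_j^d\mapsto d\,x_ix_j^{d-1}$ when it is not and $x_ix_j\mapsto x_i^2$ when it is. In both, $d\geq3$ is needed only to keep $x_i^2$ out of $I_H$.
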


\begin{proof}
It suffices to exhibit a generator $\XX^a$ of
$I_H$ with $a_j\geq1$ and $\XX^{a-e_j+e_i}\notin I_H$. If $\{i,j\}\notin E$, take
$\XX^a=x_j^d$; then $\XX^{a-e_j+e_i}=x_j^{\,d-1}x_i$ has support $\{i,j\}$ and all
exponents $<d$, so it is divisible neither by any power $x_k^d$ nor by any edge
monomial (the only candidate, $x_ix_j$, is not a generator). If $\{i,j\}\in E$,
take $\XX^a=x_ix_j$; then $\XX^{a-e_j+e_i}=x_i^2$ has support $\{i\}$, so it is
divisible by no edge monomial and, since $2<d$, by no power. In both cases
$\XX^{a-e_j+e_i}\notin I_H$.
\end{proof}

\begin{theorem}\label{thm:realize-fin}
For every finite group $\Gamma$ there is a monomial ideal
$I\subset\KK[\XX]$ such that
$$
\Aut_\KK(\KK[\XX]/I)\big/\Aut^0_\KK(\KK[\XX]/I)\cong\Gamma.
$$
Moreover $I$ may be chosen so that
$\Aut_\KK(\KK[\XX]/I)\cong\Aut^0_\KK(\KK[\XX]/I)\rtimes\Gamma$ and
$\Aut^0_\KK(\KK[\XX]/I)$ is solvable.
\end{theorem}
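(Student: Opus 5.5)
Given $\Gamma$, I would use \cref{thm:frucht} to choose a graph $H$ on $V=\{1,\dots,n\}$ with $\Aut(H)\cong\Gamma$, fix $d\geq 3$, and take $I=I_H$ as in \eqref{eq:IH}. The argument then has three steps: identify the component group, split off $\Gamma$ as a complement, and prove $G^0$ solvable. The idea is that \cref{lem:no-bidir} kills all degree-sum-zero roots, so the linear part of $G^0$ is just $T$.

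\textbf{Component group.} By \cref{lem:no-bidir}, no root $\alpha$ has $s(\alpha)=0$. Hence, by \cref{prop:semidirect}, $G\cong U_1\rtimes G_1$, where $G_1$ is generated by $T$ and the toric automorphisms $\Aut(S,I_H)\cong\Aut(H)\cong\Gamma$ (\cref{lem:frucht-ideal}). Permutations normalize $T$, so $G_1=T\rtimes\Aut(S,I_H)$. This product is semidirect because a permutation matrix that is also diagonal is the identity, so $T\cap\Aut(S,I_H)=1$ inside $\GL(\M/\M^2)$, where $a$ is injective on $G_1$. Therefore $G\cong (U_1\rtimes T)\rtimes\Gamma$. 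Since $U_1$ is connected by \cref{prop:semidirect}(ii) and $T$ is connected, $U_1\rtimes T$ is a connected closed normal subgroup of finite index, hence equals $G^0$. This gives $G/G^0\cong\Gamma$ together with the splitting $G\cong G^0\rtimes\Gamma$.

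\textbf{Solvability.} $G^0=U_1\rtimes T$ is an extension of a torus by a unipotent group, and unipotent groups are nilpotent, hence solvable. So $G^0$ is solvable; equivalently, its Levi subgroup is $L=T$, consistent with \cref{prop:levi} since $\Sigma_L=\varnothing$.

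\textbf{Main obstacle.} The delicate point is the claim $G^0=U_1\rtimes T$: one must make sure no finite part of $G_1$ lies in $G^0$. I would handle this by computing $G_1$ exactly as $T\rtimes\Aut(S,I_H)$ inside $\GL_n(\KK)$; its identity component is $T$ because $\Aut(S,I_H)$ is finite and meets $T$ trivially. A secondary check is that $G_1$ contains nothing beyond $T$ and the toric automorphisms. This holds by the definition of $G_1$ in \cref{prop:semidirect}, once \cref{lem:no-bidir} and \cref{lem:nontrivial-criterion} show that there are no $U_\alpha$ with $s(\alpha)=0$.
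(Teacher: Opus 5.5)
Your proposal is correct and follows essentially the paper's proof. It uses the same ideal $I_H$, applies \cref{lem:no-bidir} to rule out degree-sum-zero roots so that the linear part of $G^0$ is just $T$, and uses the fact that a permutation matrix is diagonal only when it is the identity; the only cosmetic difference is that you identify $G^0=U_1\rtimes T$ directly as a connected closed normal subgroup of finite index, whereas the paper proves injectivity of $\Aut(S,I)\to G/G^0$ by comparing images under $a$.
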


\begin{proof}
By \cref{thm:frucht} choose a graph $H$ with $\Aut(H)\cong\Gamma$, and let
$I=I_H$ as in \eqref{eq:IH} with $d\geq3$. By \cref{lem:frucht-ideal},
$\Aut(S,I)\cong\Gamma$, and by \cref{thm:dlmr}\,(iii) the natural map
$\Aut(S,I)\to G/G^0$ is surjective. It remains to show that this map is also
injective, that is, that no nontrivial toric automorphism lies in $G^0$.

To detect whether a toric automorphism $\sigma$ belongs to $G^0$, we use the
morphism $a\colon G\to\GL(\M/\M^2)$ that sends each automorphism to its action
on the classes $\overline{x}_1,\dots,\overline{x}_n$ modulo $\M^2$. Let
$U_1 = \ker a$ be the subgroup of automorphisms acting trivially on $\M/\M^2$
(\cref{prop:semidirect}). Since $U_1$ is connected
(\cref{prop:semidirect}\,(\ref{item:unipotent})), it is contained in $G^0$, so
$\sigma\in G^0$ if and only if $a(\sigma)\in a(G^0)$. By
(\ref{item:semidirect}), $G^0 = U_1 \rtimes G_1^0$, so $a(G^0) = a(G_1^0)$.

We now determine $G_1^0$. Since $G_1^0$ is generated by $T$ and the root
subgroups $U_\alpha$ with $s(\alpha)=0$
(\cref{prop:semidirect}\,(\ref{item:semidirect})), it suffices to show there are
no such roots. By \cref{remark:-e_i}, a root of degree-sum zero is of the form $e_i-e_j$ with $i\neq j$, and the only homogeneous derivation of $\KK[\XX]$ of that degree is, up to scalar, $\partial_{e_i-e_j,\,e_j^*}$. By \cref{lem:no-bidir} none of these
preserves $I_H$, so $G_1^0 = T$ and $a(G^0) = a(T)$ is the diagonal torus of
$\GL(\M/\M^2)$.

On the other hand, a toric automorphism $\sigma$ permutes the classes
$\overline{x}_1,\dots,\overline{x}_n$, so $a(\sigma)$ is a permutation matrix;
such a matrix is diagonal only when $\sigma = \mathrm{id}$. Therefore
$\Aut(S,I)\cap G^0 = \{1\}$ and
$$
G/G^0\cong\Aut(S,I)\cong\Gamma.
$$

Since $\Aut(S,I)$ meets $G^0$ trivially and surjects onto $G/G^0$, it is a
complement, whence $G\cong G^0\rtimes\Aut(S,I)\cong G^0\rtimes\Gamma$.
Finally, since $G_1^0 = T$, the reductive part of $G^0$ equals the maximal
torus; therefore $G^0$ is solvable.
\end{proof}

\subsection{Examples}\label{subsec:families}
We illustrate the construction with two classical families of groups.

Symmetric groups. For the complete graph $H=K_n$ on $V=\{1,\dots,n\}$,
every permutation of the vertices preserves the edge set $\binom{V}{2}$, so
$\Aut(K_n)=S_n$. With $d\geq 3$ the ideal
$$
I_{K_n}=\big(x_1^d,\dots,x_n^d,\; x_ix_j\mid 1\le i<j\le n\big)
$$
satisfies $G/G^0\cong S_n$ by the construction from \cref{thm:realize-fin}.

Dihedral groups. For $n\geq 3$, let $H=C_n$ be the cycle on
$V=\{1,\dots,n\}$ with edges $\{k,k+1\}$ for $k=1,\dots,n-1$ and $\{n,1\}$.
An automorphism of $C_n$ must send a vertex to any of the $n$ vertices and its
neighbor to one of the two neighbors of the image, giving at most $2n$
automorphisms; the $n$ rotations and $n$ reflections realize this bound, so
$\Aut(C_n)\cong D_n$. With $d\geq 3$ the ideal
$$
I_{C_n}=\big(x_1^d,\dots,x_n^d,\; x_nx_1,\; x_kx_{k+1}\mid 1\le k\le n-1\big)
$$
satisfies $G/G^0\cong D_n$ by the construction from \cref{thm:realize-fin}.

\section{Generating sets for the automorphism group}\label{sec:generators}
In this section we look for small generating sets of $G^0$, and determine a minimal one in the generic case. Recall from \cref{thm:dlmr} that $G^0$ is generated by the torus $T$ and the one-parameter subgroups $U_{\alpha,p}$ for $\alpha\in\mathcal R(I)$ and $p\in N(\alpha)$. Not all of these are needed: in many cases the derivations of a given degree arise as Lie brackets of those in other degrees, making the corresponding subgroups redundant. To make this precise, we introduce the notion of irreducible degree and analyze when such reductions are well-founded. The irreducible degrees single out the root subgroups that must be retained regardless of any choices made, and we present an explicit algorithm returns a generating family whose set of degrees is minimal.

Throughout, $\mathfrak g=\Der(A)=\operatorname{Lie}(G)$ carries the $M$-grading
$\mathfrak g=\bigoplus_{\gamma}\mathfrak g_\gamma$, where $\gamma$ ranges over
$\{0\}\cup\mathcal R(I)$ and $\mathfrak g_0=\mathfrak t=\operatorname{Lie}(T)$.
Which root subgroups are needed is, degree by degree, the question of which root
derivations cannot be obtained as Lie brackets from those of ``lower'' degrees. The correct measure
of this is Lie-theoretic. For a degree $\gamma\neq 0$ put
$$
  B_\gamma \;=\; \sum_{\substack{\alpha+\beta=\gamma\\ \alpha,\beta\in\mathcal R(I)}}
  [\mathfrak g_\alpha,\mathfrak g_\beta]\ \subseteq\ \mathfrak g_\gamma,
  \qquad
  \mu(\gamma) \;=\; \dim_\KK\bigl(\mathfrak g_\gamma/B_\gamma\bigr).
$$

\begin{definition}\label{def:irreducible-degree}
A degree $\gamma\in\mathcal R(I)$ is reducible if $B_\gamma=\mathfrak
g_\gamma$, and irreducible otherwise. 

\end{definition}
By \cite[Section~13.1, Theorem]{Humphreys}, generating $G^0$ from $T$ and a family
of root subgroups amounts to generating $\mathfrak g$ from $\mathfrak t$ and the
corresponding root derivations. The quantity $\mu(\gamma)$ is therefore a
lower bound: every generating set for $\mathfrak g$ must include at
least $\mu(\gamma)$ root derivations in degree~$\gamma$. However,
$B_\gamma=\mathfrak g_\gamma$ does not by itself guarantee that the degree
$\gamma$ can be removed from the generating set. Indeed, $B_\gamma$ is defined
as a sum of brackets over all pairs of roots $\alpha,\beta$ with
$\alpha+\beta=\gamma$, and those summands may themselves be reducible.
Discarding every reducible degree at once is legitimate only when the resulting
chain of reductions is well-founded; circular dependencies can arise otherwise,
as the next example shows.

\begin{example}\label{ex:reducible-not-discardable}
Let $I=\M^2\subset\KK[x_1,x_2,x_3]$. Then
$A=\KK\oplus(\M/\M^2)$ and $G^0=\GL_3(\KK)$. The roots of
degree-sum zero are the six $e_i-e_j$ ($i\neq j$), each of which is reducible:
for instance, $e_1-e_3=(e_1-e_2)+(e_2-e_3)$ provides a nonzero bracket
$[\overline\partial_{e_1-e_2,\,e_2^*},\,
  \overline\partial_{e_2-e_3,\,e_3^*}]
= \overline\partial_{e_1-e_3,\,e_3^*}$,
so $B_{e_1-e_3}=\mathfrak g_{e_1-e_3}$. But these six roots recover each other
in a cycle: the decomposition of each one uses two others, and no root has a
decomposition into ``simpler'' summands. Discarding all six leaves only the
torus~$T$, which does not generate $G^0=\GL_3(\KK)$. The reducibility of every
$e_i-e_j$ is genuine (each $\mathfrak g_{e_i-e_j}$ is indeed spanned by
brackets) but the six reductions are mutually dependent and cannot all be
performed simultaneously.
\end{example}

\subsection{Reduction for inner roots} We now try to determine irreducible degrees and also ``discardable'' degrees (which is not the same as reducible). Throughout, we freely use the bracket formula
$[\overline\partial_{\alpha,p},\overline\partial_{\beta,q}]
=\overline\partial_{\alpha+\beta,\,r}$, with $r=p(\beta)q-q(\alpha)p$, of
\cref{rmk:lie-bracket}.

\begin{lemma}\label{lem:recovery-mixed}
Let $\gamma \in \ZZ^n_{\geq 0}\setminus\supp(I)$ be an inner root different
from $\ell e_i$ for all $i$. Then for every $r\in N_\KK$ there exist
$p, q \in N_\KK$ and inner roots $\alpha, \beta$ with $\alpha+\beta=\gamma$
such that
$[\overline\partial_{\alpha,p},\overline\partial_{\beta,q}]
=\overline\partial_{\gamma,r}$. In particular, any such root is reducible.
\end{lemma}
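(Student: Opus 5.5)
The plan is to exhibit a single decomposition $\gamma=\alpha+\beta$ into two linearly independent inner roots, and then to solve the bracket equation of \cref{rmk:lie-bracket} directly for $p,q$. We work at the level of $\KK[\XX]$: by \cref{rmk:lie-bracket}, $[\overline\partial_{\alpha,p},\overline\partial_{\beta,q}]=\overline\partial_{\gamma,\,p(\beta)q-q(\alpha)p}$. So it suffices to find $p,q\in N_\KK$ with $p(\beta)q-q(\alpha)p=r$ exactly. No restriction on $p,q$ is needed, because for inner degrees every $\partial_{\alpha,p}$ preserves $I$.

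\emph{Choice of $\alpha,\beta$.} Since $\gamma$ is inner, nonzero, and not of the form $\ell e_i$, it has at least two nonzero coordinates. Fix $i$ with $\gamma_i\geq 1$ and set $\alpha=e_i$, $\beta=\gamma-e_i$. Then $\beta\in\ZZ^n_{\geq0}$, and $\beta\neq 0$. Moreover $\beta$ is not proportional to $e_i$, since otherwise $\gamma$ would be a multiple of $e_i$. Hence $\alpha$ and $\beta$ are linearly independent in $M_\KK$.

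Next we check that $\alpha$ and $\beta$ are inner roots. The key monotonicity is that $\alpha,\beta\leq\gamma$ coordinatewise and $\supp(I)$ is stable under adding $\ZZ^n_{\geq0}$. Consequently, for each $e_k\in E_\gamma$ we have $\gamma+e_k\notin\supp(I)$, which forces $\alpha+e_k,\beta+e_k\notin\supp(I)$. Thus $E_\gamma\subseteq E_\alpha\cap E_\beta$. Since $\gamma$ is a root, $E_\gamma\neq\varnothing$, so by \cref{def:roots} both $\alpha$ and $\beta$ are inner roots.

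\emph{Solving for $p,q$.} By linear independence we may pick $p\in N_\KK$ with $p(\beta)=1$ and $p(\alpha)=0$. We then look for $q$ of the form $q=r+\lambda p$. Substituting gives
$$p(\beta)q-q(\alpha)p=r+\lambda p-\bigl(r(\alpha)+\lambda p(\alpha)\bigr)p=r+\bigl(\lambda-r(\alpha)\bigr)p.$$
Choosing $\lambda=r(\alpha)$, i.e.\ $q=r+r(\alpha)p$, yields exactly $r$. Hence $[\overline\partial_{\alpha,p},\overline\partial_{\beta,q}]=\overline\partial_{\gamma,r}$ for every $r$. In particular $B_\gamma\supseteq[\mathfrak g_\alpha,\mathfrak g_\beta]=\mathfrak g_\gamma$, so $\gamma$ is reducible.

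The only delicate point is ensuring that $\alpha$ and $\beta$ are genuinely roots and linearly independent. This is precisely where the hypothesis $\gamma\neq\ell e_i$ is used: for $\gamma=\ell e_i$, every decomposition into inner degrees is proportional to $e_i$, and the linear-algebra step above breaks down.
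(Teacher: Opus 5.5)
Your proof is correct, and it uses the same mechanism as the paper's. Both pick one splitting $\gamma=\alpha+\beta$ into inner roots, take $p$ vanishing on $\alpha$, and solve linearly for $q$. The difference is the splitting.

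The paper peels off a whole coordinate, $\alpha=(\gamma_1,\dots,\gamma_{n-1},0)$ and $\beta=\gamma_ne_n$, with $p=e_n^*$ and an explicit formula for $q$. You peel off a single unit vector, $\alpha=e_i$ and $\beta=\gamma-e_i$, and use only that $\alpha,\beta$ are linearly independent. Your version makes it transparent that any splitting into two linearly independent inner roots makes $(p,q)\mapsto p(\beta)q-q(\alpha)p$ surjective onto $N_\KK$. Applied to the paper's splitting, your recipe $q=r+r(\alpha)p$ reproduces the paper's $q$, up to moving the factor $1/\gamma_n$ from $p$ to $q$.

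For the root check, you derive $E_\gamma\subseteq E_\alpha\cap E_\beta$ from $E_\gamma\neq\varnothing$. The paper instead exhibits explicit elements, namely $e_n\in E_\alpha$ and $e_j\in E_\beta$ for some $j<n$ with $\gamma_j\neq0$, using only $\gamma\notin\supp(I)$. Both arguments are valid under the stated hypotheses.

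Both splittings give summands of strictly smaller degree sum, which is all the induction in \cref{lem:discardable} needs. Yours always produces a summand in some $\mathfrak g_{e_i}$, while the paper's may produce a pure power $\gamma_ne_n$; both kinds of summand are handled there.
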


\begin{proof}
Without loss of generality $\gamma_n\neq 0$. Choose $\alpha=(\gamma_1,\dots,\gamma_{n-1},0)$,
$\beta=\gamma_n e_n$, $p=e_n^*$, and $q=(q_1,\dots,q_n)$ with
$$
q_i=\frac{r_i}{\gamma_n}\quad(1\le i\le n-1),\qquad
q_n=\frac{1}{\gamma_n}\Bigl(r_n+\frac{1}{\gamma_n}
\sum_{i=1}^{n-1}r_i\gamma_i\Bigr).
$$
Both $\alpha$ and $\beta$ are inner roots: they are nonzero (since
$\gamma_n\neq 0$ and $\gamma\neq\ell e_n$) and dominated componentwise by
$\gamma\notin\supp(I)$. Moreover $\alpha+e_n\leq\gamma$ and, choosing $i<n$
with $\gamma_i\neq0$, also $\beta+e_i\leq\gamma$, so that
$e_n\in E_\alpha$ and $e_i\in E_\beta$ and neither $E_\alpha$ nor $E_\beta$
is empty. Since $p(\beta)=\gamma_n$ and
$q(\alpha)=\frac{1}{\gamma_n}\sum_{i=1}^{n-1}r_i\gamma_i$, the covector
$p(\beta)q-q(\alpha)p$ equals $r$. By \cref{rmk:lie-bracket},
$[\overline\partial_{\alpha,p},\overline\partial_{\beta,q}]
=\overline\partial_{\gamma,r}$.
\end{proof}

\begin{lemma}\label{lem:recovery-multiple}
Let $\gamma$ be an inner root. If $\gamma = \ell e_i$ with $\ell > 2$, then for every
$r \in N_\KK$ there exist $p, q \in N_\KK$ and inner roots
$\alpha, \beta$ different from $\gamma$ with $\alpha + \beta = \gamma$
such that
$[\overline\partial_{\alpha,p}, \overline\partial_{\beta,q}]
= \overline\partial_{\gamma,r}$. In particular, any such root is reducible.
\end{lemma}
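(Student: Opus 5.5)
The plan is to split $\gamma=\ell e_i$ as $\alpha=e_i$ and $\beta=(\ell-1)e_i$, and then solve the bracket equation of \cref{rmk:lie-bracket} for $q$ after fixing $p=e_i^*$. Both summands are nonzero and different from $\gamma$ (this uses $\ell\geq 2$ for $\alpha$ and is automatic for $\beta$), and they add up to $\gamma$. It remains to check that $\alpha$ and $\beta$ are inner roots and that every covector $r\in N_\KK$ is reached.

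\textbf{Roots.} Since $\gamma$ is an inner root, $E_\gamma\neq\varnothing$, so $\gamma+e_k\notin\supp(I)$ for some $k$. As $\supp(I)$ is stable under adding elements of $\ZZ^n_{\geq0}$, it follows that $\gamma\notin\supp(I)$, and neither is any degree dominated by $\gamma$.
\begin{itemize}
\item[(a)] We have $\alpha+e_i=2e_i\leq \ell e_i=\gamma$, because $\ell\geq 2$. Hence $e_i\in E_\alpha$.
\item[(b)] We have $\beta+e_i=\ell e_i=\gamma\notin\supp(I)$. Hence $e_i\in E_\beta$.
\end{itemize}
So both $\alpha$ and $\beta$ are inner roots by \cref{def:roots}.

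\textbf{Bracket computation.} Take $p=e_i^*$, so that $p(\beta)=\ell-1$. By \cref{rmk:lie-bracket},
$$[\overline\partial_{\alpha,p},\overline\partial_{\beta,q}]=\overline\partial_{\gamma,r'},\qquad r'=(\ell-1)\,q-q_i\,e_i^*,$$
where $q_i=q(\alpha)$. Comparing coordinates, we need
$$(\ell-1)q_k=r_k\quad (k\neq i),\qquad (\ell-2)\,q_i=r_i.$$
We therefore set $q_k=r_k/(\ell-1)$ for $k\neq i$ and $q_i=r_i/(\ell-2)$. This choice gives $r'=r$, and so $[\overline\partial_{\alpha,p},\overline\partial_{\beta,q}]=\overline\partial_{\gamma,r}$. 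Since this holds for every $r$, we get $B_\gamma=\mathfrak g_\gamma$, i.e.\ $\gamma$ is reducible.

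\textbf{The only real obstacle} is the $i$-th coordinate of $r'$. There the two terms of the bracket partially cancel, leaving the factor $\ell-2$. This factor is invertible exactly when $\ell>2$, since we are in characteristic zero. For $\ell=2$ it vanishes, and this splitting can never produce a nonzero $e_i^*$-component. That is why the degrees $2e_i$ are excluded from the statement and need a separate analysis.
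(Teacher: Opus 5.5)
Your proof is correct and follows essentially the paper's argument: you use the same splitting $\gamma=(\ell-1)e_i+e_i$, the same check that both summands are inner roots because $\gamma\notin\supp(I)$, and the same bracket formula from \cref{rmk:lie-bracket}. The only difference is a cleaner choice of parameters: fixing the covector $e_i^*$ on the summand $e_i$ turns the problem into a diagonal system with entries $\ell-1$ and $\ell-2$, which avoids the paper's case split on whether $r_i=0$.
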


\begin{proof}
Without loss of generality $i = n$. Set $\alpha = (\ell-1)e_n$ and
$\beta = e_n$; both are inner roots different from $\gamma$ since
$\ell > 2$ (indeed $\alpha+e_n=\gamma\notin\supp(I)$ and
$\beta+e_n=2e_n\leq\gamma$, so $e_n$ lies in $E_\alpha$ and in $E_\beta$). If $r_n = 0$, take $p = (0,\dots,0,1)$ and
$q = (r_1,\dots,r_{n-1},0)$; then $p(\beta) = 1$, $q(\alpha) = 0$,
and $p(\beta)q - q(\alpha)p = q = r$.
If $r_n \neq 0$, take $q = (r_1,\dots,r_n)$ and
$p = (p_1,\dots,p_n)$ with
$$
p_n = \frac{1}{2-\ell}, \qquad
p_i = \frac{r_i}{r_n(2-\ell)} \quad (i < n).
$$
Then $p(\beta) = p_n$ and $q(\alpha) = (\ell-1)r_n$, so the $n$-th
component of $p(\beta)q - q(\alpha)p$ is
$p_n r_n - (\ell-1)r_n p_n = r_n p_n(2-\ell) = r_n$, and for $i < n$
it is $p_n r_i - (\ell-1)r_n p_i = r_i$. By \cref{rmk:lie-bracket},
$[\overline\partial_{\alpha,p}, \overline\partial_{\beta,q}]
= \overline\partial_{\gamma,r}$.
\end{proof}

\begin{lemma}\label{prop:two-ei-criterion}
Let $2e_i$ be a root. Then $2e_i$ is reducible if and only if one of the
following holds:
\begin{enumerate}[(i)]
\item $3e_i\in\supp(I)$;
\item there is $k\neq i$ such that $2e_i-e_k$ is a root and
      $e_i+e_k\notin\supp(I)$;
\item there is $k\neq i$ such that $e_k-e_i$ and $3e_i-e_k$ are roots.
\end{enumerate}
Moreover, if it is irreducible (i.e.~if none of these conditions holds), then $e_i\in E_{2e_i}$ and $B_{2e_i}=\langle\overline\partial_{2e_i,e_j^*}\mid e_j\in E_{2e_i},\ j\neq i\rangle$, which is a hyperplane of $\mathfrak g_{2e_i}$; hence $\mu(2e_i)=1$.
\end{lemma}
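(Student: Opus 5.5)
The plan is to compute $B_{2e_i}$ explicitly. I will enumerate every decomposition $2e_i=\alpha+\beta$ with $\alpha,\beta\in\mathcal R(I)$ and use the bracket formula of \cref{rmk:lie-bracket}. Write $\mathfrak g_{2e_i}$ with basis $\overline\partial_{2e_i,e_j^*}$ for $e_j\in E_{2e_i}$ (\cref{lem:nonzero-deriv}). Note that $2e_i$ being a root forces $2e_i\notin\supp(I)$, hence $e_i\in E_{e_i}$. By \cref{remark:-e_i} every root has $s\ge0$, so a decomposition has $\{s(\alpha),s(\beta)\}=\{0,2\}$ or $\{1,1\}$. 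Moreover each of $\alpha,\beta$ is inner or has a single coordinate equal to $-1$. This leaves a finite list of shapes:
\begin{itemize}
\item[(a)] $e_i+e_i$;
\item[(b)] $e_k+(2e_i-e_k)$ with $k\ne i$;
\item[(c)] $(e_i-e_k)+(e_i+e_k)$;
\item[(d)] $(e_k-e_i)+(3e_i-e_k)$;
\item[(e)] $(e_k-e_l)+(2e_i-e_k+e_l)$ with $k,l\ne i$;
\item[(f)] pairs of outer roots of sum $1$, such as $(e_i+e_l-e_k)+(e_i+e_k-e_l)$.
\end{itemize}

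\emph{The hyperplane.} Case (a) gives $r=p_iq-q_ip$, so $r_i=0$. Choosing $p=e_i^*$ and $q=e_j^*$ for $e_j\in E_{2e_i}\subseteq E_{e_i}$ with $j\neq i$ yields $r=e_j^*$. Hence $B_{2e_i}\supseteq H:=\langle\overline\partial_{2e_i,e_j^*}\mid e_j\in E_{2e_i},\,j\ne i\rangle$. If $3e_i\in\supp(I)$, then $e_i\notin E_{2e_i}$ and $H=\mathfrak g_{2e_i}$, which proves (i)$\Rightarrow$ reducible. Otherwise $e_i\in E_{2e_i}$ and $H$ is a hyperplane. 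So reducibility is equivalent to some bracket having nonzero $i$-th coordinate $r_i$, and in the irreducible case we obtain $B_{2e_i}=H$ and $\mu(2e_i)=1$.

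\emph{Sufficiency of (ii) and (iii).} For (ii), take $[\overline\partial_{2e_i-e_k,e_k^*},\overline\partial_{e_k,e_i^*}]$. Here $e_i^*\in N(e_k)$ precisely because $e_i+e_k\notin\supp(I)$, and $r=e_i^*-2e_k^*$, so $r_i=1$. For (iii), take $[\overline\partial_{e_k-e_i,e_i^*},\overline\partial_{3e_i-e_k,e_k^*}]$, which gives $r=3e_k^*-e_i^*$ and so $r_i=-1$.

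\emph{Necessity.} Assume that (i)--(iii) fail; I must show $r_i=0$ in every remaining shape. The general tool: if $\alpha$ is outer of direction $k$ (so $p=e_k^*$), then $r_i=\beta_kq_i-q(\alpha)\delta_{ik}$, and when $\beta$ is also outer the factor $q_i$ vanishes unless $\beta$ has direction $i$.
\begin{itemize}
\item In (b), $\alpha=2e_i-e_k$ has direction $k\ne i$, so $r_i=q_i$. This requires $e_i\in E_{e_k}$, i.e.\ $e_i+e_k\notin\supp(I)$, which together with $2e_i-e_k$ being a root is exactly (ii).
\item In (d), the roots $e_k-e_i$ and $3e_i-e_k$ both appear, which is exactly (iii).
\item In (e) and (f), both summands are outer of directions different from $i$, so $q_i=0$ and $r_i=0$.
\end{itemize}

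\emph{Main obstacle: case (c).} Here $r=q-(q_i-q_k)e_k^*$, so $r_i=q_i$. This requires $e_i-e_k$ to be a root and $e_i\in E_{e_i+e_k}$, i.e.\ $2e_i+e_k\notin\supp(I)$. I expect to show this configuration forces (ii) or (iii). First, since $2e_i+e_k\notin\supp(I)$, we get $e_i+e_k\notin\supp(I)$. Next, I will try to deduce that $2e_i-e_k$ satisfies \eqref{eq:outer-iii}. The idea: $\partial_{e_i-e_k,e_k^*}$ preserves $I$, and composing it with the preservation of $I$ by multiplication by $x_i$ should give condition \eqref{eq:outer-iii} for $2e_i-e_k$ on generators $\XX^{\mathbf a}$ with $a_k\ge1$. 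If that transfer fails, I will instead look for a contradiction with $3e_i\notin\supp(I)$, or fall back on (iii). This generator-level check is the delicate point. If it does not go through, the statement may need (c) added explicitly, and I would test small examples to decide.
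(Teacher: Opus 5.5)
Your plan is the paper's proof. You compute $B_{2e_i}$ decomposition by decomposition: the brackets from $\mathfrak g_{e_i}\times\mathfrak g_{e_i}$ give exactly the hyperplane $\{r_i=0\}$, and you then check which remaining decompositions can produce $r_i\neq0$. Your list (a)--(f) is a finer version of the paper's split into inner/outer and outer/outer pairs; the paper handles your (e) and (f) at once by noting that $r_i=0$ whenever neither summand has direction $e_i^*$.

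The only incomplete point is case (c), which you leave as a hedge. It is not a real obstruction, and the statement needs no extra condition: the transfer you propose is exactly the paper's argument, and it works. Suppose $e_i-e_k\in\mathcal R_k(I)$ and $2e_i+e_k\notin\supp(I)$, which is what a nonzero contribution from (c) requires.

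\begin{itemize}
\item Take $\mathbf a\in\supp(I)$ with $\mathbf a+2e_i-e_k\in\ZZ^n_{\geq0}$. Then $a_k\geq1$, so $\mathbf a+e_i-e_k\in\ZZ^n_{\geq0}$.
\item Since $e_i-e_k$ satisfies \eqref{eq:outer-iii}, this gives $\mathbf a+e_i-e_k\in\supp(I)$. Adding $e_i$ yields $\mathbf a+2e_i-e_k\in\supp(I)$.
\item Hence $2e_i-e_k$ satisfies the outer-degree conditions \eqref{eq:outer-i}--\eqref{eq:outer-iii}.
\item Moreover $e_k\in E_{2e_i-e_k}$, because $(2e_i-e_k)+e_k=2e_i\notin\supp(I)$, as you observed at the start. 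So $2e_i-e_k$ is a root.
\item Since $\supp(I)$ is stable under adding $\ZZ^n_{\geq0}$, the fact $2e_i+e_k\notin\supp(I)$ forces $e_i+e_k\notin\supp(I)$.
\end{itemize}

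This is condition (ii) of the statement for the same $k$. So decomposition (c) contributes a bracket with $r_i\neq0$ only when (ii) already holds. That completes your necessity argument, and the fallbacks via $3e_i\notin\supp(I)$ or via (iii) are not needed.
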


\begin{proof}
By \cref{rmk:lie-bracket},
$[\overline\partial_{e_i,p},\overline\partial_{e_i,q}]
=\overline\partial_{2e_i,r}$ with $r=p(e_i)q-q(e_i)p$, whose $e_i^*$-component is $p(e_i)q(e_i)-q(e_i)p(e_i)=0$; as $p,q$ range over $\mathfrak g_{e_i}$ these brackets exhaust the subspace $\{\overline\partial_{2e_i,p}:p(e_i)=0\}$ of $\mathfrak g_{2e_i}$, which is therefore contained in $B_{2e_i}$. This immediately yields the last assertion.

If $3e_i\in\supp(I)$, then $e_i\notin E_{2e_i}$ and hence $\mathfrak g_{2e_i}$ is spanned by elements $\overline\partial_{2e_i,p}$ with $p(e_i)=0$; hence $B_{2e_i}=\mathfrak g_{2e_i}$ and $2e_i$ is reducible. This
is case~(i).

Assume now $3e_i\notin\supp(I)$, so $\mathfrak g_{2e_i}$ has one further
direction $\overline\partial_{2e_i,e_i^*}$; then $2e_i$ is
reducible if and only if this direction lies in $B_{2e_i}$, necessarily
contributed by a decomposition $2e_i=\alpha+\beta$ with an outer summand.\\

Suppose $\alpha$ is inner and $\beta$ is outer, say in $\R_k(I)$. Since $-e_i$ is not a root, $\beta_j>0$ for at least one $j\neq i$, hence $\alpha_j+\beta_j>0$ and thus the case $k=i$ is impossible. So $k\neq i$, and matching coordinates forces $\alpha=\alpha_ie_i+e_k$ and
$\beta=(2-\alpha_i)e_i-e_k$ with $\alpha_i\in\{0,1,2\}$. Again, $\alpha_i=2$ is excluded because $\beta=-e_k$ is never a root. So there remain two possibilities: $\alpha=e_k$ with $\beta=2e_i-e_k$, or $\alpha=e_i+e_k$ with $\beta=e_i-e_k$.

In the first, writing $q=e_k^*$ and $p$ for the covector of $\alpha$, \cref{rmk:lie-bracket} gives $r(e_i)=p(\beta)q_i-q(\alpha)p_i=-p_i$. This is nonzero for a suitable $p\in N(e_k)$ precisely when $e_i+e_k\notin\supp(I)$, in which case $e_k$ is indeed a root since $e_i\in E_{e_k}$. This is case~(ii).

In the second we get $r(e_i)=-p_i$, which is nonzero for a suitable $p\in N(e_i+e_k)$ precisely when $2e_i+e_k\notin\supp(I)$. But then
$e_i+e_k\notin\supp(I)$ as well, so $e_k$ is a root again. Moreover, when $e_i-e_k$ is an outer root and $2e_i\notin\supp(I)$ (which is the case here), $2e_i-e_k$ is a root as well, so that case~(ii) already holds for the same $k$. This decomposition therefore contributes nothing beyond the first.\\

Suppose instead both summands are outer, lying in $\R_j(I)$ and $\R_k(I)$ respectively;
we have $j\neq k$, since otherwise the corresponding coordinate of $2e_i$
would be $-2$. With $p=e_j^*$ and $q=e_k^*$ we get $r(e_i)=p(\beta)\,q_i-q(\alpha)\,p_i=\beta_j\,\delta_{ki}-\alpha_k\,\delta_{ji}$, which vanishes unless $j=i$ or $k=i$. Thus only decompositions with one
summand in $\R_i(I)$ contribute. Matching coordinates,
any such decomposition is of the form $(e_k-e_i)+(3e_i-e_k)$ for some $k\neq i$, which is case~(iii).
\end{proof}

\begin{example}\label{ex:two-ei}
Let $I=(x_1^4,\,x_1^3x_2,\,x_2^3)\subset\KK[x_1,x_2]$. Both $2e_1$ and $2e_2$ are roots, and
\cref{prop:two-ei-criterion} distinguishes them. For $2e_2$ one has
$3e_2\in\supp(I)$, so $2e_2$ is reducible by case~(i): its whole space is
recovered from $[\mathfrak g_{e_2},\mathfrak g_{e_2}]$. For $2e_1$, instead,
$3e_1\notin\supp(I)$, and neither case~(ii) nor case~(iii) applies, since
neither $2e_1-e_2$ nor $e_2-e_1$ are roots. Hence $2e_1$ is
irreducible, with $\mu(2e_1)=1$: the derivation $\overline\partial_{2e_1,\,e_1^*}$ is not a bracket of roots of ``lower'' degree.
\end{example}

For the degrees $e_i$ the situation is a bit more subtle. In order to study $B_{e_i}$, we need to record then all the possible decompositions.

\begin{lemma}\label{lem:ei-decompositions}
Let $e_i$ be a root and let $e_i=\alpha+\beta$ with
$\alpha,\beta\in\mathcal R(I)$. Then, after possibly exchanging the two
summands, exactly one of the following holds:
$$
\begin{array}{lll}
\textup{(A)} & \alpha=e_i-e_k, & \beta=e_k,\\[2pt]
\textup{(B)} & \alpha=e_k-e_i, & \beta=2e_i-e_k,\\[2pt]
\textup{(C)} & \alpha=e_k-e_l, & \beta=e_i-e_k+e_l,
\end{array}
$$
with $k,l,i$ pairwise distinct.
\end{lemma}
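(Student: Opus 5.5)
Since $e_i$ has coordinate sum $1$ and every root has $s\geq0$ (\cref{remark:-e_i}), any decomposition $e_i=\alpha+\beta$ has $\{s(\alpha),s(\beta)\}=\{0,1\}$. The plan is therefore a short case analysis on the types of $\alpha$ and $\beta$, using only that inner roots lie in $\ZZ^n_{\geq0}\setminus\{0\}$, that outer roots have exactly one negative coordinate, equal to $-1$, with all others $\geq0$, and that $-e_j$ is never a root (\cref{remark:-e_i}).

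First, both summands cannot be inner: two nonzero vectors of $\ZZ^n_{\geq0}$ summing to $e_i$ is impossible. So, after exchanging, $\alpha$ is outer. Assume first $s(\alpha)=0$; then $\alpha=e_a-e_b$ with $a\neq b$ by \cref{remark:-e_i}, and $\beta=e_i-e_a+e_b$ with $s(\beta)=1$. We split according to whether $a$ or $b$ equals $i$.

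If $a=i$, then $\beta=e_b$, which is case (A) with $k=b$. If $b=i$, then $\alpha=e_a-e_i$ and $\beta=2e_i-e_a$, which is case (B) with $k=a$. If $a,b\neq i$, then $\beta=e_i-e_a+e_b$ with $i,a,b$ pairwise distinct, which is case (C) with $k=a$ and $l=b$.

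Assume instead $s(\alpha)=1$, so $s(\beta)=0$. If $\beta$ is outer, swapping the roles of the summands reduces to the previous analysis. If $\beta$ is inner with $s(\beta)=0$, then $\beta=0$, which is not a root. Hence the three cases (A), (B), (C) exhaust all decompositions.

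For the word \emph{exactly}, I would check that the three shapes are mutually exclusive, comparing unordered pairs $\{\alpha,\beta\}$. In (A) one summand is inner. In (B) and (C) both summands are outer: $2e_i-e_k$ and $e_i-e_k+e_l$ have a negative coordinate. So (A) is distinct from (B) and (C). To separate (B) from (C), note that in (B) the summand of sum zero has its negative coordinate at $i$, while in (C) the negative coordinate sits at $l\neq i$ and the positive one at $k\neq i$. Could the sum-zero summand be read either way? No: a pair contains exactly one summand of sum zero, since the sums are $0$ and $1$, so that summand is determined and its negative coordinate either is $i$ or is not.

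I expect no genuine obstacle here. The only care needed is in invoking \cref{remark:-e_i}, which is used both to rule out $s<0$ and to identify the sum-zero roots as exactly the $e_a-e_b$. Inner $\beta$ with $s=0$ is excluded because it would be $0$. The lemma asserts only shapes, not that these degrees are actually roots, so no support conditions need checking.
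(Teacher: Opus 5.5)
Your proof is correct and is essentially the paper's argument: a coordinate case analysis based on $s(\alpha)+s(\beta)=1$, $s\geq 0$ on roots, and the shape of outer roots. The differences are only organizational. You split on where $i$ sits relative to the degree-sum-zero summand $e_a-e_b$ (identified via \cref{remark:-e_i}), whereas the paper splits on inner versus outer summands and their directions; you also check the mutual exclusivity of (A), (B), (C) explicitly, which the paper leaves implicit.
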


\begin{proof}
Since $s(e_i)=1$ and inner roots have $s\geq1$, at most one summand is inner. If $\beta$ is inner, then $s(\beta)=1$ and $s(\alpha)= 0$. Thus, $\beta=e_k$ for some $k$ and then $\alpha=e_i-e_k$
with $k\neq i$ (as $\alpha\neq0$). This is (A).

If both are outer, say in directions $k$ and $l$, then $k\neq l$, since otherwise that coordinate of
$e_i$ would equal $-2$. Comparing coordinates, the summand outer in direction $k$ has $k$-th coordinate $-1$ and the other has $(e_i)_k+1$, while the remaining coordinates vanish, except for the one carrying $e_i$.
This gives (B) when $i\in\{k,l\}$ and (C) otherwise.
\end{proof}

These three types of decomposition have different consequences on the subspace $B_{e_i}$ we are trying to understand.

\begin{lemma}\label{prop:ei-Iouter-search}
Let $e_i$ be a root and consider the subspace $D_{e_i}:=\{\overline\partial_{e_i,p}\mid p(e_i+\mathbf{1})=0\}\subseteq\mathfrak g_{e_i}$, where $\mathbf{1}=e_1+\cdots+e_n$. If there is a decomposition of type \textup{(A)}, then $B_{e_i}=[\mathfrak g_{e_i-e_k},\mathfrak g_{e_k}]=\mathfrak g_{e_i}$ and $[\mathfrak g_{e_i-e_k},D_{e_k}]=D_{e_i}$. Otherwise, $B_{e_i}$ is the sum of the following subspaces, one for each decomposition of \cref{lem:ei-decompositions}:
$$
\begin{array}{ll}
\textup{(B)} & \bigl\langle \overline{\partial}_{e_i,2e_k^*-e_i^*}\bigr\rangle\subseteq D_{e_i},\\[4pt]
\textup{(C)} & \bigl\langle \overline{\partial}_{e_i,e_k^*-e_l^*}\bigr\rangle\subseteq D_{e_i}.
\end{array}
$$
In particular $e_i$ is irreducible if and only if there are no roots of the form $e_i-e_k$.
\end{lemma}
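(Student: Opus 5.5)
The plan is to compute each type of bracket from \cref{lem:ei-decompositions} with the formula of \cref{rmk:lie-bracket} and read off the span, using throughout that outer root spaces are one-dimensional, spanned by $\overline\partial_{\beta,e_j^*}$ with $j$ the direction of $\beta$.

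\textbf{Type (A).} Take $\alpha=e_i-e_k$ (direction $k$) and $\beta=e_k$ (inner). For $p=e_k^*$ and $q\in N(e_k)$ the bracket has covector
$$r=p(e_k)\,q-q(e_i-e_k)\,e_k^*=q-(q_i-q_k)\,e_k^*.$$
The $e_j^*$-components with $j\neq k$ are $q_j$, hence free. The $e_k^*$-component is $q_i$, which is free whenever $e_i\in E_{e_k}$. This holds here: $e_i\in E_{e_i}$ because $e_i$ is a root, and $e_i-e_k$ being outer maps $e_i+e_k$ to $2e_i$, so $e_i+e_k\notin\supp(I)$. We must also check $E_{e_i}\subseteq E_{e_k}$ up to the index swap $i\leftrightarrow k$. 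If $e_i+e_j\notin\supp(I)$, then $e_k+e_j\notin\supp(I)$, since otherwise condition \eqref{eq:outer-iii} applied to $e_k+e_j$ would put $e_i+e_j$ in $\supp(I)$; the case $j=k$ uses $2e_k\mapsto e_i+e_k\notin\supp(I)$ in the same way. So the map $q\mapsto r$ is a linear bijection onto the space of covectors supported on $E_{e_i}$, giving $B_{e_i}=[\mathfrak g_{e_i-e_k},\mathfrak g_{e_k}]=\mathfrak g_{e_i}$. For the $D$ statement, observe $r(e_i+\mathbf 1)=q(e_i+\mathbf 1)-(q_i-q_k)\cdot 2+\cdots$. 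Evaluating carefully, $r(e_i+\mathbf 1)=q(e_k+\mathbf 1)$, since $e_k^*(e_i+\mathbf 1)=1$ and therefore $r(e_i+\mathbf 1)=q(e_i+\mathbf 1)-q_i+q_k$. Hence the bijection carries $D_{e_k}$ onto $D_{e_i}$.

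\textbf{Types (B) and (C).} Type (B) has $\alpha=e_k-e_i$ (direction $i$) and $\beta=2e_i-e_k$ (direction $k$). With $p=e_i^*$, $q=e_k^*$ we get $r=p(\beta)q-q(\alpha)p=2e_k^*-e_i^*$, and $(2e_k^*-e_i^*)(e_i+\mathbf 1)=2-2=0$. Type (C) has $\alpha=e_k-e_l$ (direction $l$) and $\beta=e_i-e_k+e_l$ (direction $k$). With $p=e_l^*$, $q=e_k^*$ we get $r=p(\beta)e_k^*-q(\alpha)e_l^*=e_k^*-e_l^*$, which clearly lies in $D_{e_i}$. Since each outer space is one-dimensional, each such pair contributes exactly this line. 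When no type (A) decomposition exists, $B_{e_i}$ is the sum of these lines.

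\textbf{Final equivalence.} If some $e_i-e_k$ is a root, then $e_k$ is a root: it is nonzero and $e_i\in E_{e_k}$ by the argument above. So type (A) occurs and $e_i$ is reducible. Otherwise $B_{e_i}\subseteq D_{e_i}$, and it remains to show $D_{e_i}\subsetneq\mathfrak g_{e_i}$. Indeed $e_i\in E_{e_i}$, so $e_i^*\in N(e_i)$, and $e_i^*(e_i+\mathbf 1)=2\neq0$; hence $\overline\partial_{e_i,e_i^*}\notin D_{e_i}$ and $e_i$ is irreducible.

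The main obstacle is the inclusion $E_{e_i}\subseteq E_{e_k}$ (with the correct index bookkeeping) in type (A), which makes the bracket surject onto all of $\mathfrak g_{e_i}$ rather than onto a proper subspace. I would verify it directly from \eqref{eq:outer-iii} applied to the generators $e_k+e_j$, as sketched above.
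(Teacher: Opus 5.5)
\textbf{Verdict.} Your bracket computations for types (A), (B) and (C) are the same as the paper's, and your identity $r(e_i+\mathbf 1)=q(e_k+\mathbf 1)$ is correct. However, there is a genuine gap in the final step, and it cannot be closed because the last assertion of the statement is false.

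\textbf{A repairable error in type (A).} You claim ``$e_i\in E_{e_i}$ because $e_i$ is a root'', but being a root only means $E_{e_i}\neq\varnothing$. For $I=(x_1^2,x_1x_2,x_2^2,x_3^2)$, both $e_1-e_2$ and $e_1$ are roots, yet $E_{e_1}=\{e_3\}$ and $e_1+e_2\in\supp(I)$. So your assertion $e_i\in E_{e_k}$ also fails, and with it your argument that $e_k$ is a root. This part can be fixed. The inclusion $E_{e_i}\subseteq E_{e_k}$, which you do prove correctly, already gives the surjectivity $[\mathfrak g_{e_i-e_k},\mathfrak g_{e_k}]=\mathfrak g_{e_i}$, and it shows $e_k$ is a root whenever $e_i-e_k$ is. Alternatively, as the paper does, let $q$ run over all of $N_\KK$; the bracket formula holds for every $q$, since inner derivations preserve $I$. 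Then $q\mapsto q-(q_i-q_k)e_k^*$ is a bijection of $N_\KK$ carrying $\{q(e_k+\mathbf 1)=0\}$ onto $\{r(e_i+\mathbf 1)=0\}$. Two smaller slips: the $e_k^*$-coordinate of $r$ is $2q_k-q_i$, not $q_i$, and the map does not send $N(e_k)$ bijectively onto the covectors supported on $E_{e_i}$; only the induced map to $\mathfrak g_{e_i}$ is onto.

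\textbf{Which reading of $D_{e_i}$.} The argument above requires reading $D_{e_i}$ as $\{\overline\partial_{e_i,p}\mid p\in N_\KK,\ p(e_i+\mathbf 1)=0\}$. This is also the only reading in which your checks $(2e_k^*-e_i^*)(e_i+\mathbf 1)=0$ and $(e_k^*-e_l^*)(e_i+\mathbf 1)=0$ place the (B) and (C) brackets inside $D_{e_i}$, because these covectors need not lie in $N(e_i)$.

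\textbf{The fatal step.} With this reading, $p\mapsto\overline\partial_{e_i,p}$ kills every $e_j^*$ with $e_j\notin E_{e_i}$, and $e_j^*(e_i+\mathbf 1)\neq0$. Hence $D_{e_i}=\mathfrak g_{e_i}$ as soon as $E_{e_i}\neq E$, and $e_i^*(e_i+\mathbf 1)\neq0$ tells you nothing about whether $\overline\partial_{e_i,e_i^*}\in D_{e_i}$. If instead you restrict $p$ to $N(e_i)$, your inference becomes valid, but then the (B) and (C) containments fail.

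\textbf{The implication is false.} The implication ``no root of the form $e_i-e_k$ $\Rightarrow$ $e_i$ irreducible'' fails for the ideal $I=(x_1^2,x_1x_2,x_2^3)$ of \cref{ex:strategy-run}(a):
\begin{itemize}
\item $e_2$ is a root with $\mathfrak g_{e_2}=\KK\,\overline\partial_{e_2,e_2^*}$;
\item $e_2-e_1$ is not a root, since $\partial_{e_2-e_1,e_1^*}(x_1x_2)=x_2^2\notin I$;
\item nevertheless the type (B) bracket gives $[\overline\partial_{e_1-e_2,e_2^*},\overline\partial_{2e_2-e_1,e_1^*}]=\overline\partial_{e_2,2e_1^*-e_2^*}=-\overline\partial_{e_2,e_2^*}\neq0$.
\end{itemize}
So $e_2$ is reducible, which the paper itself uses there to discard $e_2$. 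The paper's proof has the same gap: its assertion $\overline\partial_{e_i,e_j^*}\notin D_{e_i}$ fails in this example.

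\textbf{What survives.} A root $e_i-e_k$ does force reducibility. The converse holds under an extra hypothesis such as $E_{e_i}=E$, where $p\mapsto\overline\partial_{e_i,p}$ is injective and $D_{e_i}$ is a genuine hyperplane.
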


\begin{proof}
By \cref{lem:ei-decompositions} every decomposition of $e_i$ is of type (A), (B)
or (C), so $B_{e_i}$ is the sum of the corresponding brackets. In each case the
covector of the bracket is $r=p(\beta)q-q(\alpha)p$ by \cref{rmk:lie-bracket}, and
the covector of an outer root is a multiple of $e_j^*$, where $e_j^*$ is its
direction.

For the decomposition (A), $\alpha=e_i-e_k$ is outer in the direction $e_k^*$, so we may set $p=e_k^*$, while $\beta=e_k$ is inner and hence $q\in N_\KK$ is arbitrary (even though it might give a trivial derivation, this is valid). Since $p(\beta)=1$ and
$q(\alpha)=q_i-q_k$, we get $r=q-(q_i-q_k)e_k^*$. Taking $q=e_j^*$ gives:
\[r=e_j^*\,\text{ for }\,j\neq i,k,\quad r=2e_k^*\,\text{ for }\,j=k,\quad r=e_i^*-e_k^*\,\text{ for }\,j=i.\]
This clearly spans the whole $N_\KK$, so we get all possible derivations in $\mathfrak g_{e_i}$. If we assume now that $q\in D_{e_k}$, we know that $q_k+\sum_{j=1}^n q_j=0$, and hence
\[r_i+\sum_{j=1}^n r_j=2r_i+r_k+\sum_{j\neq i,k} r_j=2q_i+(2q_k-q_i)+\sum_{j\neq i,k} q_j=q_k+\sum_{j=1}^n q_j=0,\]
so that $r\in D_{e_i}$. The fact that the image is the whole $D_{e_i}$ is a dimension count.

For the decomposition (B), $\alpha=e_k-e_i$ is outer in the direction $e_i^*$ and
$\beta=2e_i-e_k$ is outer in the direction $e_k^*$, so $p=e_i^*$ and $q=e_k^*$.
Then $p(\beta)=2$ and $q(\alpha)=1$, whence $r=2e_k^*-e_i^*$.

For the decomposition (C), $\alpha=e_k-e_l$ is outer in the direction $e_l^*$ and
$\beta=e_i-e_k+e_l$ is outer in the direction $e_k^*$, so $p=e_l^*$ and
$q=e_k^*$. Then $p(\beta)=1$ and $q(\alpha)=1$, whence $r=e_k^*-e_l^*$.

Summing over all admissible $k,l$ gives the stated description of $B_{e_i}$. Finally, $e_i$ is irreducible precisely when this sum is a proper subspace of $\mathfrak g_{e_i}$. This is the case when there are no decompositions of type \textup{(A)} since, as $\mathfrak g_{e_i}$ is generated by the $e_j^*$ with $e_j\in E_{e_i}$ and $\overline\partial_{e_i,e_j^*}\not\in D_{e_i}$, we have $D_{e_i}\neq\mathfrak g_{e_i}$ unless both are trivial (and in this case $e_i$ is not a root).
\end{proof}

\begin{example}\label{ex:ei-partial}
Let $I=(x_1^2,\,x_2^2,\,x_3^2,\,x_1x_3)\subset\KK[x_1,x_2,x_3]$. The three degrees
$e_1,e_2,e_3$ are roots, as well as $e_1-e_3$ and $e_3-e_1$. For $e_1$ one has
$\dim\mathfrak g_{e_1}=1$, and the type-(A) decomposition $e_1=(e_1-e_3)+e_3$
recovers its single direction, so $e_1$ is reducible; by symmetry so is $e_3$.
For $e_2$, however, $\dim\mathfrak g_{e_2}=2$, with basis $\{\overline{\partial}_{e_2,e_1^*},\overline{\partial}_{e_2,e_3^*}\}$. The
only decompositions of $e_2$ are of type (C), with $\alpha=e_3-e_1$ or
$\alpha=e_1-e_3$, and each contributes the covector $e_1^*-e_3^*$ to $B_{e_2}$.
Hence $B_{e_2}$ is one-dimensional and $\mu(e_2)=1$: the degree $e_2$ is
irreducible, but only one of its two directions must be retained, the other
being recovered as a bracket.
\end{example}

\cref{lem:recovery-multiple,lem:recovery-mixed,prop:two-ei-criterion} can be used to immediately reduce the set of generators by discarding those that are reducible. However, as \cref{ex:reducible-not-discardable} shows, there are some circularity issues generated by the roots of the form $e_i-e_j$, and these actually affect the roots of the form $e_i$. So one cannot do the same reduction with \cref{prop:ei-Iouter-search}. In any case, there is an intelligent way of discarding roots that yields the following result.

Define on $E$ the relation $e_i\succ e_j$ if $e_i-e_j\in\R(I)$ and fix a set $E_\mathrm{min}\subseteq E\cap\R(I)$ of ``minimal roots'', i.e.~such that for every $e_k\in E \cap\R(I)$ there exists $e_i\in E_\mathrm{min}$ with $e_k\succ e_i$ or $e_k=e_i$. Such a set exists because $\succ$ is transitive: if $e_i-e_j$ and $e_j-e_k$ are roots with $i\neq k$, take $\mathbf a\in\supp(I)$ with $a_k>0$; then $\mathbf a-e_k+e_j\in\supp(I)$ since $\partial_{e_j-e_k,e_k^*}$ preserves $I$, and its $j$-th coordinate is positive, so $\mathbf a-e_k+e_i\in\supp(I)$ since $\partial_{e_i-e_j,e_j^*}$ preserves $I$. Hence $\partial_{e_i-e_k,e_k^*}$ preserves $I$, and the induced derivation is nonzero because $(e_i-e_k)+e_k=e_i\notin\supp(I)$; that is, $e_i\succ e_k$.

Given the transitivity of $\succ$, we may and will assume henceforth that $E_\mathrm{min}$ satisfies moreover that there is no relation $e_j\succ e_i$ with $e_i,e_j\in E_{\mathrm{min}}$ distinct. This is done simply by succesively searching for such pairs within the remaining set and dropping one of the elements.

\begin{proposition}\label{lem:discardable}
The Lie algebra $\mathfrak g$ is generated by $\mathfrak t\cup\bigcup_{\alpha}\mathfrak g_\alpha$, where $\alpha$ ranges over all outer roots, all inner roots $e_i$ with $e_i\in E_\mathrm{min}$ and all irreducible roots $2e_i$.
\end{proposition}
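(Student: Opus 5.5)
Let $\mathfrak h\subseteq\mathfrak g$ be the Lie subalgebra generated by $\mathfrak t$ together with $\mathfrak g_\alpha$ for $\alpha$ outer, $\alpha=e_i\in E_{\mathrm{min}}$, and $\alpha=2e_i$ irreducible. Since $\mathfrak h$ is $\mathfrak t$-stable, it is graded: $\mathfrak h=\bigoplus_\gamma(\mathfrak h\cap\mathfrak g_\gamma)$. By \cref{thm:dlmr} and \cref{remark:-e_i}, $\mathfrak g=\mathfrak t\oplus\bigoplus_{\gamma\in\R(I)}\mathfrak g_\gamma$, so it suffices to show $\mathfrak g_\gamma\subseteq\mathfrak h$ for every inner root $\gamma$. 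Outer roots are already included. I would argue by induction on the absolute degree $s(\gamma)\geq1$, treating $s(\gamma)=1$ and $s(\gamma)=2$ (which contains the $2e_i$) as base cases.

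\emph{Degree-sum one.} Let $e_k\in\R(I)$. If $e_k\in E_{\mathrm{min}}$, there is nothing to prove. Otherwise, by the choice of $E_{\mathrm{min}}$ there is $e_i\in E_{\mathrm{min}}$ with $e_k\succ e_i$, i.e.\ $e_k-e_i$ is an (outer) root. This gives a type (A) decomposition $e_k=(e_k-e_i)+e_i$ in \cref{lem:ei-decompositions}, and \cref{prop:ei-Iouter-search} yields $[\mathfrak g_{e_k-e_i},\mathfrak g_{e_i}]=\mathfrak g_{e_k}$. Both factors lie in $\mathfrak h$, so $\mathfrak g_{e_k}\subseteq\mathfrak h$. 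The point is that the relation is anchored at elements of $E_{\mathrm{min}}$, so no circularity of the kind in \cref{ex:reducible-not-discardable} can arise.

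\emph{Higher degrees.} Let $\gamma$ be an inner root with $s(\gamma)\geq2$.
\begin{itemize}
\item If $\gamma$ is not a multiple of some $e_i$, \cref{lem:recovery-mixed} writes every element of $\mathfrak g_\gamma$ as a bracket of elements of $\mathfrak g_\alpha$ and $\mathfrak g_\beta$. Here $\alpha,\beta$ are inner roots with $\alpha+\beta=\gamma$, so $s(\alpha),s(\beta)<s(\gamma)$, and they lie in $\mathfrak h$ by induction.
\item If $\gamma=\ell e_i$ with $\ell\geq3$, \cref{lem:recovery-multiple} does the same with $\alpha=(\ell-1)e_i$ and $\beta=e_i$.
\item If $\gamma=2e_i$ and it is irreducible, it is a generator by hypothesis.
\item If $\gamma=2e_i$ is reducible, I would use \cref{prop:two-ei-criterion}. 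Its proof shows that $[\mathfrak g_{e_i},\mathfrak g_{e_i}]$ is the subspace $\{\overline\partial_{2e_i,p}:p(e_i)=0\}$, which lies in $\mathfrak h$ by the degree-one step. It remains to get $\overline\partial_{2e_i,e_i^*}$ when $3e_i\notin\supp(I)$. In case (ii) it comes from $[\mathfrak g_{e_k},\mathfrak g_{2e_i-e_k}]$, where $\mathfrak g_{e_k}\subseteq\mathfrak h$ by the degree-one step and $2e_i-e_k$ is outer. In case (iii) it comes from the bracket of the two outer spaces $\mathfrak g_{e_k-e_i}$ and $\mathfrak g_{3e_i-e_k}$.
\end{itemize}
Collecting the cases gives $\mathfrak h=\mathfrak g$.

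\emph{Main obstacle.} The step needing most care is checking that each reduction uses only spaces already known to lie in $\mathfrak h$, and that the brackets in the proofs of \cref{lem:recovery-mixed,lem:recovery-multiple} are genuine brackets of derivations of $A$. For the latter, note that $\alpha,\beta\in\R(I)$ and that, by \cref{rmk:lie-bracket}, the bracket of the induced derivations is the induced derivation of the bracket. Beyond this bookkeeping, the argument is the inductive assembly above.
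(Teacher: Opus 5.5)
Your proposal is correct and follows essentially the same route as the paper. The degrees $e_k$ are handled through the type (A) decomposition $e_k=(e_k-e_i)+e_i$ anchored at $E_{\mathrm{min}}$ together with \cref{prop:ei-Iouter-search}. The remaining inner roots are then treated by induction on $s(\gamma)$, using \cref{lem:recovery-mixed,lem:recovery-multiple} and, for reducible $2e_i$, the case analysis from the proof of \cref{prop:two-ei-criterion}; you spell out more explicitly than the paper which bracket supplies the missing $e_i^*$-direction in cases (ii) and (iii).
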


\begin{proof}
Let $\gamma\in\mathcal R(I)$ be a root. It suffices to prove that $\mathfrak g_\gamma$ is generated by this set. If $\gamma$ is outer, the result is obvious. If $\gamma=e_k$, then either $e_k\in E_\mathrm{min}$ and the result is obvious or there exists $e_i\in E_\mathrm{min}$ with $e_k\succ e_i$. This implies that $e_k-e_i\in\R(I)$ and hence we are done by \cref{prop:ei-Iouter-search}. If $\gamma=2e_i$ and is irreducible, then the result is also obvious. We may assume then that $\gamma$ is inner, $\gamma\neq e_i$ and $\gamma\neq 2e_i$ unless one of the hypotheses of \cref{prop:two-ei-criterion} is met (i.e.~unless $2e_i$ is reducible).

We argue by induction on $s(\gamma)\geq2$. If $\gamma$ has at least two nonzero
coordinates, \cref{lem:recovery-mixed} writes every element of
$\mathfrak g_\gamma$ as a Lie bracket
$[\overline\partial_{\alpha,p},\,\overline\partial_{\beta,q}]$ with
$\alpha,\beta$ inner and
$s(\alpha),s(\beta)<s(\gamma)$. If $\gamma=\ell e_i$ with $\ell\geq3$,
\cref{lem:recovery-multiple} decomposes it as $(\ell-1)e_i+e_i$, again with
both summands inner and of strictly smaller degree sum. Finally, if $\gamma=2e_i$ and one of the hypotheses of \cref{prop:two-ei-criterion} is met, then every element of $\mathfrak g_{2e_i}$ can be written as a sum of Lie brackets of pairs in either $\mathfrak g_{e_i}^2$, $\mathfrak g_{2e_i-e_k}\times\mathfrak g_{e_k}$ or $\mathfrak g_{e_k-e_i}\times\mathfrak g_{3e_i-e_k}$.

In every case, the summands are either outer, in $\mathfrak g_{e_i}$ for some $i$, in $\mathfrak g_{2e_i}$ for some irreducible $2e_i$, or satisfy the inductive hypothesis, so $\mathfrak g_\gamma$ is generated as claimed.
\end{proof}

\cref{lem:discardable} tells us that a minimal generating set for $\mathfrak g$ (and hence for $G^0$) involves only the degrees in three families, namely the outer roots, some ``minimal'' inner roots $e_i\in E_\mathrm{min}$ and the irreducible roots $2e_i$, but the
containment is in general strict for outer roots. We now try to determine which outer roots can be discarded, reading the criteria off the minimal generators of $I$.

\subsection{Reduction for outer roots} For the outer family one can actually detect irreducible degrees and discard some other outer degrees in an effective way: the starting point is the fact that the outer roots in each $\R_i(I)$ are computed from the generators of $I$ by iterated colon ideals.

Recall that for two ideals $I,J\subseteq\KK[\XX]$, the colon ideal
(or ideal quotient) is $I:J=\{f\in\KK[\XX] : fJ\subseteq I\}$.
By \cite[Theorem~2.2.1]{B95}, the derivation
$\partial_{\beta-e_i,\,e_i^*}$ of $\KK[\XX]$ preserves $I$ if and only if
$\XX^\beta\in J_i:=I:\bigl(I:(x_i)\bigr)$.

\begin{remark}\label{rmk:outer-colon}
The outer roots in $\R_i(I)$ are thus the degrees $\alpha=\beta-e_i$ with $\beta_i=0$,
$\XX^\beta\in J_i$ and $\XX^\beta\notin I$. Since $J_i$ is a monomial ideal, it is determined by its minimal
generating set $G(J_i)$. Put
$$
\widetilde G_i=\bigl\{\XX^\beta\in G(J_i)\ :\
\beta_i=0,\ \XX^\beta\notin I\bigr\},
\qquad
\widetilde G(I)=\bigcup_{i=1}^n\bigl\{\beta-e_i\ :\
\XX^\beta\in\widetilde G_i\bigr\},
$$
so that $\widetilde G_i$ consists of monomials and $\widetilde G(I)$ of the
corresponding outer degrees. Both are finite and are obtained directly from the minimal generators of $I$ by an algorithmic procedure (see \cite[Proposition~1.2.2]{HH11} for the algorithm yielding $G(J_i)$). Note also that $\tilde G(I)$ contains every root of the form $e_j-e_i$ since there is no way that $\XX^{e_j}\in J_i$ and is not in a minimal generating set.
\end{remark}

\begin{lemma}\label{lem:outer-minimal}
Let $\gamma\in\R_i(I)$. Then
$\gamma\in\widetilde G(I)$ if and only if there is no decomposition
$\gamma=\alpha+\delta$ with $\alpha\in\R_i(I)$ and
$\delta\in\ZZ^n_{\geq 0}$ an inner root.
\end{lemma}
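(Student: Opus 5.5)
The plan is to translate both sides into statements about monomials and the monomial ideal $J_i=I:(I:(x_i))$, using \cref{rmk:outer-colon}. Write $\gamma=\beta-e_i$ with $\beta_i=0$, $\XX^\beta\in J_i$ and $\XX^\beta\notin I$. A decomposition $\gamma=\alpha+\delta$ with $\alpha\in\R_i(I)$ and $\delta\in\ZZ^n_{\geq0}$ an inner root is the same as a monomial $\XX^{\beta'}=\XX^{\alpha+e_i}$ with $\beta'_i=0$, $\XX^{\beta'}\in J_i\setminus I$, and $\beta=\beta'+\delta$ with $\delta\neq0$. In other words, $\XX^{\beta'}$ properly divides $\XX^\beta$. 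The statement then amounts to: $\XX^\beta$ is a minimal generator of $J_i$ if and only if no proper divisor of $\XX^\beta$ lies in $J_i$ and corresponds to an outer root. The one extra condition to track is that $\delta$ must actually be an inner root.

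First I prove the forward direction by contraposition. Suppose $\gamma=\alpha+\delta$ is such a decomposition, with $\beta'=\alpha+e_i$. Then $\XX^{\beta'}\in J_i$ and $\XX^{\beta'}$ properly divides $\XX^\beta$, since $\delta\neq0$ because roots are nonzero. Hence $\XX^\beta\notin G(J_i)$, and so $\gamma\notin\widetilde G(I)$.

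For the converse, suppose $\gamma\notin\widetilde G(I)$. Since $\XX^\beta\in J_i$, there is a minimal generator $\XX^{\beta'}\in G(J_i)$ properly dividing $\XX^\beta$. As $\beta'\leq\beta$, we get $\beta'_i=0$, and $\XX^{\beta'}\notin I$ because $\XX^\beta\notin I$ and $I$ is an ideal. By \cref{rmk:outer-colon}, $\alpha:=\beta'-e_i$ satisfies conditions \eqref{eq:outer-i}--\eqref{eq:outer-iii}, and $\alpha+e_i=\beta'\notin\supp(I)$, so $e_i\in E_\alpha$ and $\alpha\in\R_i(I)$. Put $\delta=\beta-\beta'\in\ZZ^n_{\geq0}\setminus\{0\}$. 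It remains to check that $\delta$ is an inner root, i.e.\ that $E_\delta\neq\varnothing$ (\cref{def:roots}).

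This last step is the main obstacle. I would show that $\delta+e_j\notin\supp(I)$ for some $j$. Pick any $j$ with $\beta'_j>0$; such a $j$ exists because $\beta'\neq0$, as $1\notin J_i$ (since $I:(x_i)\neq\KK[\XX]$ by $x_i\notin I$). Then $\delta+e_j\leq\delta+\beta'=\beta$, and $\XX^\beta\notin I$ gives $\delta+e_j\notin\supp(I)$. Hence $e_j\in E_\delta$ and $\delta$ is a root, which gives the required decomposition. I would double-check the claim $\beta'\neq0$ carefully, since it is exactly what makes $E_\delta$ nonempty; otherwise the argument is routine divisibility bookkeeping.
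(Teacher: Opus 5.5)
Your proof follows the paper's argument essentially step for step, and the divisibility bookkeeping in both directions is correct. (The paper picks any proper divisor of $\XX^{\gamma+e_i}$ lying in $J_i$; you pick a minimal generator, which changes nothing here.)

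The one flaw is exactly at the step you flagged: the reason you give for $1\notin J_i$ does not prove it. Since $J_i=I:(I:(x_i))$, one has $1\in J_i$ if and only if $I:(x_i)\subseteq I$. So what you need is $I:(x_i)\neq I$, not $I:(x_i)\neq\KK[\XX]$.

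The condition you used is beside the point. If $I:(x_i)$ were all of $\KK[\XX]$, one would get $J_i=I$, which does not contain $1$. It also does not imply the needed one in general. For $I=(x_2)\subset\KK[x_1,x_2]$ one has $x_1\notin I$ and $I:(x_1)=I\neq\KK[\XX]$, yet $J_1=I:I=\KK[\XX]$.

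The correct justification uses cofinite support. Let $d$ be minimal with $x_i^d\in I$. Then $d\geq2$ because $x_i\notin I$, and $x_i^{d-1}\in(I:(x_i))\smallsetminus I$. Alternatively, $\beta'=0$ would make $\alpha=-e_i$ an element of $\R_i(I)$, which \cref{remark:-e_i} excludes.

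With either replacement your proof is complete.
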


\begin{proof}
Suppose first that $\gamma\notin\widetilde G(I)$, that is,
$\XX^{\gamma+e_i}$ is not a minimal generator of $J_i$. Then some
proper divisor $\XX^{\gamma+e_i-\delta}$, with
$\delta\in\ZZ^n_{\geq 0}$ nonzero, lies in $J_i$. This divisor is not in
$I$, since otherwise $\XX^{\gamma+e_i}$ would be in $I$
as well. Its $i$-th exponent equals $-\delta_i$, hence $\delta_i=0$ and
$(\gamma-\delta)_i=-1$. Therefore $\gamma-\delta$ is an outer root in the direction $e_i^*$, and $\gamma=(\gamma-\delta)+\delta$ is a decomposition of the stated form. Finally, since $1\notin J_i$, we have $\gamma-\delta\neq -e_i$, so there is $j\neq i$ with $(\gamma-\delta)_j>0$. For such $j$, we have $e_j\in E_{\delta}$ since otherwise $\XX^{\gamma+e_i}=\XX^{\gamma+e_i-e_j-\delta}\XX^{\delta+e_j}$ would be in $I$, hence $\delta$ is an inner root.

Conversely, suppose $\gamma=\alpha+\delta$ with $\alpha\in\R_i(I)$ and $\delta\in\ZZ^n_{\geq 0}$ nonzero. Then
$\XX^{\alpha+e_i}\in J_i$ and $\XX^{\alpha+e_i}$ properly
divides $\XX^{\gamma+e_i}$, so the latter is not a minimal
generator of $J_i$ and $\gamma\notin\widetilde G(I)$.
\end{proof}

\begin{example}\label{ex:outer-colon}
Let $I=(x_1^2,\,x_1x_2,\,x_2^3)\subset\KK[x_1,x_2]$. For $i=1$,
$$
I:(x_1)=(x_1,x_2),\qquad J_1=I:(x_1,x_2)=(x_1,\,x_2^2),
$$
so $\widetilde G_1=\{x_2^2\}$ and $\mathcal R_1(I)=\{-e_1+2e_2\}$, with
derivation $\overline\partial_{-e_1+2e_2,\,e_1^*}$. For
$i=2$,
$$
I:(x_2)=(x_1,x_2^2),\qquad J_2=I:(x_1,x_2^2)=(x_1,x_2),
$$
so $\widetilde G_2=\{x_1\}$ and $\mathcal R_2(I)=\{e_1-e_2\}$, with
derivation $\overline\partial_{e_1-e_2,\,e_2^*}$. Here every
element of $\widetilde G(I)$ is an irreducible degree.
\end{example}

\begin{example}\label{ex:m2-outer-outer}
The elements of $\widetilde G(I)$ need not be irreducible, since
\cref{lem:outer-minimal} only rules out decompositions with a summand outer
in the same direction. Revisiting \cref{ex:reducible-not-discardable}, let $I=\M^2=(x_1^2,x_2^2,x_3^2,x_1x_2,x_1x_3,x_2x_3)
\subset\KK[x_1,x_2,x_3]$. Then $I:(x_1)=\M$ and
$J_1=I:\M=\M$, so
$$
\widetilde G_1=\{x_2,\,x_3\}\quad \text{and}\quad 
\mathcal R_1(I)=\{-e_1+e_2,\ -e_1+e_3\}\subseteq \widetilde G(I).
$$
Yet we know from \cref{ex:reducible-not-discardable} that every outer root of the form $e_i-e_j$ is reducible.
\end{example}

\begin{lemma}\label{lem:outer-reducible}
Let $\gamma\in\R_i(I)$ be an outer root. If $\gamma = \alpha + \beta$ with
$\alpha, \beta \in \mathcal R(I)$, then $[\mathfrak g_{\alpha},\mathfrak g_\beta]=\mathfrak g_{\gamma}$. In particular, $\gamma$ is reducible.

If moreover $\gamma\in\widetilde G(I)$ then
\begin{enumerate}[(i)]
\item both $\alpha$ and $\beta$ are outer;
\item their directions are distinct, and one of them is $e_i^*$;
\item the decomposition can be chosen so that the summand in $\R_i(I)$ lies in $\widetilde G(I)$.
\end{enumerate}
\end{lemma}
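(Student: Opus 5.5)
The plan is to split according to the types of the two summands. The key observation is that, since $\gamma\in\R_i(I)$, we have $E_\gamma=\{e_i\}$ and $\mathfrak g_\gamma=\KK\,\overline\partial_{\gamma,e_i^*}$. So by \cref{rmk:lie-bracket} and \cref{lem:nontrivial-criterion}, a bracket $\overline\partial_{\gamma,r}$ is a generator of $\mathfrak g_\gamma$ as soon as $r_i\neq0$. Also $\gamma_i=-1$, $\gamma_j\ge0$ for $j\ne i$, and $\gamma+e_i\notin\supp(I)$.

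\emph{First assertion.} Two inner summands are impossible, because $\gamma_i<0$. Hence at least one summand, say $\beta$, is outer.

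\emph{Case 1: $\alpha$ inner.} The coordinate $\gamma_i=-1$ forces $\beta\in\R_i(I)$ and $\alpha_i=0$. Take $q=e_i^*$. The bracket covector then has $i$-th entry $r_i=p(\beta)-\alpha_ip_i=p(\beta)$, so we need some $p\in N(\alpha)$ with $p(\beta)\neq0$. Since $-e_i$ is not a root (\cref{remark:-e_i}), there is $j\neq i$ with $\beta_j>0$. As $\beta+e_i\ge0$, we get
$$\alpha+e_j\le\alpha+\beta+e_i=\gamma+e_i\notin\supp(I),$$
so $e_j\in E_\alpha$. Taking $p=e_j^*$ gives $r_i=\beta_j\neq0$.

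\emph{Case 2: both summands outer.} Their directions must differ, since otherwise a coordinate would equal $-2$. Some summand must carry the $-1$ in position $i$, say $\alpha\in\R_i(I)$ and $\beta\in\R_k(I)$ with $k\neq i$. Then $\gamma_k\ge0$ forces $\alpha_k\ge1$. With $p=e_i^*$ and $q=e_k^*$ we get $r=\beta_i e_k^*-\alpha_k e_i^*$, so $r_i=-\alpha_k\neq0$.

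\emph{Items (i) and (ii).} Assume now $\gamma\in\widetilde G(I)$. Case 1 would give a decomposition of $\gamma$ as an element of $\R_i(I)$ plus an inner root, which \cref{lem:outer-minimal} forbids. So both summands are outer, and Case 2 gives the distinct directions, one of them $e_i^*$.

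\emph{Item (iii).} This is the main obstacle. Among all decompositions $\gamma=\alpha+\beta$ with $\alpha\in\R_i(I)$ and $\beta\in\R_k(I)$, choose one with $s(\alpha)$ minimal, and suppose $\alpha\notin\widetilde G(I)$. By \cref{lem:outer-minimal} we can write $\alpha=\alpha'+\delta$ with $\alpha'\in\R_i(I)$ and $\delta$ a nonzero inner root. Set $\beta'=\beta+\delta$, so that $\gamma=\alpha'+\beta'$. There are two sub-cases.

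If $\delta_k=0$, then $\beta'\in\R_k(I)$. Indeed:
\begin{itemize}
\item the underlying derivation $\XX^{\delta}\partial_{\beta,e_k^*}$ preserves $I$;
\item $\alpha'_k=\alpha_k\ge1$, so $\beta'+e_k\le\gamma+e_i$, which gives $e_k\in E_{\beta'}$.
\end{itemize}
This decomposition has $s(\alpha')<s(\alpha)$, contradicting minimality.

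If $\delta_k\ge1$, then $\beta'\in\ZZ^n_{\ge0}$. It is nonzero, since $\alpha'\neq\gamma$. It is an inner root by the same domination argument as in Case 1: pick $j\ne i$ with $\alpha'_j>0$; then $\beta'+e_j\le\gamma+e_i$. So $\gamma=\alpha'+\beta'$ is an element of $\R_i(I)$ plus an inner root, contradicting \cref{lem:outer-minimal}.

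Either way the minimal choice has $\alpha\in\widetilde G(I)$. The point I expect to need the most care is checking that $\beta'$ is genuinely a root in each sub-case, both the preservation of $I$ and the nonemptiness of $E_{\beta'}$.
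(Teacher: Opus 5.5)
Your proof is correct and follows essentially the paper's route. It uses the same split into a mixed inner/outer decomposition versus two outer summands in distinct directions, the same appeal to \cref{lem:outer-minimal} for (i)--(ii), and for (iii) the same replacement $(\alpha,\beta)\mapsto(\alpha',\beta+\delta)$ with the dichotomy $\delta_k>0$ versus $\delta_k=0$. Choosing a decomposition with $s(\alpha)$ minimal just repackages the paper's terminating iteration. In the mixed case, your covector $p=e_j^*$ with $\beta_j>0$, checked via $\alpha+e_j\leq\gamma+e_i$, is a harmless variant of the paper's choice $p=q=e_i^*$.

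The only step stated too tersely is $\beta'\neq0$ when $\delta_k\geq1$. If $\beta'=0$ then $\beta=-\delta\leq0$, which forces $\beta=-e_k$, and this is not a root by \cref{remark:-e_i}.
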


\begin{proof}
Since $\dim\mathfrak g_\gamma=1$, it suffices to exhibit one nonzero bracket in
$[\mathfrak g_\alpha,\mathfrak g_\beta]\subseteq\mathfrak g_\gamma$ to prove the first assertion. Note that the summands cannot both be inner: two elements of
$\ZZ^n_{\geq 0}$ add up to a vector with nonnegative coordinates, whereas
$\gamma_i=-1$. So either exactly one of them is inner or both are outer.

Suppose first that one summand is inner, say $\alpha$. Comparing the $i$-th
coordinates, we must have $\beta\in\R_i(I)$ and $\alpha_i=0$. Then taking
$p=q=e_i^*$ in \cref{rmk:lie-bracket} gives
$r=e_i^*(\beta)e_i^*-e_i^*(\alpha)e_i^*=(\beta_i-\alpha_i)\,e_i^*=-e_i^*$. Since $\overline\partial_{\gamma,e_i^*}$ generates $\mathfrak g_\gamma$, we are done. (Note that this is valid even if a priori $\overline\partial_{\alpha,e_i^*}$ might be trivial; a posteriori, it is not.)

Suppose now that both summands are outer, in the directions $e_j^*$ and $e_k^*$ respectively.  Comparing coordinates, we see that $j\neq k$ and $i\in\{j,k\}$. Assume
without loss of generality that $k=i$. Then $\alpha_i=\gamma_i-\beta_i=0$ and
$\beta_j=\gamma_j-\alpha_j=\gamma_j+1\geq1$. Taking $p=e_j^*$ and $q=e_i^*$ in
\cref{rmk:lie-bracket} gives $r=e_j^*(\beta)e_i^*-e_i^*(\alpha)e_j^*=\beta_j\,e_i^*$, whose corresponding derivation generates $\mathfrak g_\gamma$ because $\beta_j\neq0$.\\

Assume now that $\gamma\in\widetilde G(I)$. Then (i) is a direct consequence of \cref{lem:outer-minimal} and (ii) was already proved above. Let us prove (iii). By (ii) we may write $\gamma=\alpha+\beta$ with $\alpha\in\R_i(I)$ and $\beta\in\R_k(I)$ with $k\neq i$. Suppose $\alpha\notin\widetilde G(I)$. By
\cref{lem:outer-minimal}, $\alpha=\alpha'+\delta$ with $\alpha'\in\R_i(I)$ and $\delta\in\ZZ^n_{\geq 0}$ nonzero. If $\delta_k>0$, then $\delta+\beta\in\ZZ^n_{\geq 0}$ is nonzero and $\gamma=\alpha'+(\delta+\beta)$ contradicts \cref{lem:outer-minimal}. So $\delta_k=0$ and hence $\beta':=\delta+\beta$ satisfies $\beta'_k=-1$ and $\beta'_m\geq 0$ for $m\neq k$.

We check that $\beta'\in\R_k(I)$. It preserves the ideal since so does $\beta$ and $\mathbf{a}+\beta\in\supp(I)$ implies $\mathbf{a}+\beta+\delta\in\supp(I)$. To check $E_{\beta'}\neq\varnothing$, suppose $\beta'+e_k\in\supp(I)$. As $k\neq i$ and $\gamma\in\R_i(I)$ we have $\gamma_k\geq 0$, and
$\gamma_k=\alpha'_k+\delta_k+\beta_k=\alpha'_k-1$, so $\alpha'_k\geq1$.
Then $\alpha'+e_i-e_k\in\ZZ^n_{\geq 0}$ and
$$
\gamma+e_i=(\alpha'+e_i-e_k)+(\beta'+e_k),
$$
which would force $\gamma+e_i\in\supp(I)$, contradicting that $\gamma\in\R_i(I)$. Hence $\gamma=\alpha'+\beta'$ is again a decomposition of the same form. Since $\delta\neq0$, iterating strictly decreases the summand in the componentwise order, so the process terminates with that summand in $\widetilde G(I)$.
\end{proof}

\begin{remark}
Starting from \cref{lem:outer-minimal,lem:outer-reducible}, one can prove with a bit more work that an outer degree $\gamma\in\R_i(I)$ is irreducible if and only if it belongs to $\widetilde G(I)$ and, for every $\alpha\in\widetilde G(I)$ outer in the
direction $e_i^*$, the difference $\gamma-\alpha$ is not an outer root. However, this does not help to settle a minimal generating set, so we do not go into details.
\end{remark}

\cref{lem:outer-minimal,lem:outer-reducible} allow us however to gain something by reducing the outer roots to the set $\widetilde G(I)$. We sum up this partial reduction in the following result, which is an improvement of \cref{lem:discardable}.

\begin{proposition}\label{prop:discardable2}
The Lie algebra $\mathfrak g$ is generated by $\mathfrak t\cup\bigcup_{\alpha}\mathfrak g_\alpha$, where $\alpha$ ranges over $\widetilde G(I)$, all inner roots $e_i\in E_\mathrm{min}$ for some set $E_\mathrm{min}$ as in \cref{lem:discardable}, and all irreducible roots $2e_i$.
\end{proposition}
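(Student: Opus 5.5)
By \cref{lem:discardable}, $\mathfrak g$ is already generated by $\mathfrak t$, the spaces $\mathfrak g_\alpha$ for all outer roots $\alpha$, the $\mathfrak g_{e_i}$ with $e_i\in E_\mathrm{min}$, and the irreducible $\mathfrak g_{2e_i}$. Let $\mathfrak h$ be the Lie subalgebra generated by the smaller family in the statement. It therefore suffices to show that $\mathfrak g_\gamma\subseteq\mathfrak h$ for every outer root $\gamma$. We argue by induction on $\gamma$, fixing the direction $e_i^*$ of $\gamma$ and using the componentwise order on $\gamma+e_i\in\ZZ^n_{\geq0}$. Equivalently, this is induction on the position of $\XX^{\gamma+e_i}$ in the divisibility order on $J_i$, which is well-founded.

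If $\gamma\in\widetilde G(I)$, there is nothing to prove. Otherwise, \cref{lem:outer-minimal} gives a decomposition $\gamma=\alpha+\delta$ in which $\alpha\in\R_i(I)$, $\delta$ is a nonzero inner root, and $\alpha+e_i$ is strictly smaller than $\gamma+e_i$. By the first assertion of \cref{lem:outer-reducible}, $[\mathfrak g_\alpha,\mathfrak g_\delta]=\mathfrak g_\gamma$. Concretely, taking $p=q=e_i^*$, the bracket $[\overline\partial_{\delta,e_i^*},\overline\partial_{\alpha,e_i^*}]$ is a nonzero multiple of $\overline\partial_{\gamma,e_i^*}$. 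The induction hypothesis gives $\mathfrak g_\alpha\subseteq\mathfrak h$, so it remains to check that $\overline\partial_{\delta,e_i^*}$, or enough of $\mathfrak g_\delta$, lies in $\mathfrak h$.

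This is the main obstacle. The inner spaces $\mathfrak g_\delta$ were obtained in \cref{lem:discardable} through brackets that may involve outer roots not in $\widetilde G(I)$, which creates a potential circularity. To remove it, I would run a single induction over all roots at once. Order the roots first by $s(\cdot)$, and within a fixed value of $s$ by the componentwise order on $\gamma+e_i$ for outer roots. I would then re-examine each reduction used in \cref{lem:recovery-mixed,lem:recovery-multiple,prop:two-ei-criterion,prop:ei-Iouter-search} and in the step $\gamma=\alpha+\delta$ above, and check that every summand is strictly smaller in this order, except for the generators themselves.

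For the outer step this is immediate. We have $s(\alpha)<s(\gamma)$ and $s(\delta)<s(\gamma)$, since $s(\delta)\geq1$ and outer roots satisfy $s(\alpha)\geq0$ by \cref{remark:-e_i}. For inner reductions through $e_i$, the outer summands $e_i-e_k$ have $s=0$, and in the whole of \cref{sec:structure} every root with $s=0$ is of the form $e_j-e_i$, which lies in $\widetilde G(I)$ by \cref{rmk:outer-colon}. Hence these summands are generators and no circularity arises at $s=0$. For the decompositions of type (B), (C) and those in \cref{prop:two-ei-criterion}, the outer summands have smaller $s$ than the target, except possibly for equal-$s$ outer summands such as $3e_i-e_k$ for $2e_i$. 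There, I would check directly, using that $s$ strictly decreases along inner-times-outer brackets, that the needed outer summand is handled before $2e_i$ in the combined order.

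Once every reduction is shown to be strictly decreasing in this order, well-foundedness yields $\mathfrak g_\gamma\subseteq\mathfrak h$ for all roots $\gamma$. Then $\mathfrak h=\mathfrak g$, and the statement for $G^0$ follows via \cite[Section~13.1, Theorem]{Humphreys}.
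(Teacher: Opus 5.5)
Your reduction to showing $\mathfrak g_\gamma\subseteq\mathfrak h$ for outer $\gamma$ is fine, and so is the bracket $[\overline\partial_{\delta,e_i^*},\overline\partial_{\alpha,e_i^*}]=\overline\partial_{\gamma,-e_i^*}$. You also correctly identify circularity as the real issue. However, the order you propose to break that circularity does not work, because the key inequality is false. From $s(\delta)\geq1$ and $s(\alpha)\geq0$ you only get $s(\alpha)<s(\gamma)$ and $s(\delta)\leq s(\gamma)$. Equality holds exactly when $\alpha=e_j-e_i$ has degree sum zero.

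This is not exceptional. An outer $\gamma\notin\widetilde G(I)$ with $s(\gamma)=1$ can only decompose this way; for instance, for $I=\M^3\subset\KK[x_1,x_2]$ one has $2e_2-e_1=(e_2-e_1)+e_2$. Your order compares only outer roots within a fixed value of $s$, so it says nothing about an inner and an outer root of the same degree sum.

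The same false principle (``$s$ strictly decreases along inner-times-outer brackets'') is all you offer for case~(iii) of \cref{prop:two-ei-criterion}, and there the dependencies can genuinely be circular. Take $I=\M^4\subset\KK[x_1,x_2]$, where $\widetilde G(I)=\{\pm(e_1-e_2)\}$. The root $2e_2$ is reducible via $(e_1-e_2)+(3e_2-e_1)$. Meanwhile $3e_2-e_1\notin\widetilde G(I)$ admits the decomposition $(e_2-e_1)+2e_2$ from \cref{lem:outer-minimal}. Both degrees have $s=2$, and nothing in your argument prevents each from being recovered through the other. Separately, your claim that the outer summands in types (B) and (C) have smaller $s$ than $e_i$ is false, since $2e_i-e_k$ and $e_i-e_k+e_l$ have $s=1$, although these decompositions are not needed.

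What is missing is an explicit choice of decompositions that makes the dependencies acyclic. The paper handles inner roots first. It obtains $e_k$ via type (A) with $e_k-e_i\in\widetilde G(I)$. For reducible $2e_i$ in cases (ii) and (iii), it shows through the colon ideal $J_k$ that each of $2e_i-e_k$ and $3e_i-e_k$ either lies in $\widetilde G(I)$ or has its root space obtained as $[\mathfrak g_{e_i-e_k},\mathfrak g_{e_i}]$, respectively $[\mathfrak g_{2e_i-e_k},\mathfrak g_{e_i}]$. Only afterwards are outer roots reduced to $\widetilde G(I)$ plus already-handled inner roots.

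Within your framework, a clean repair is to always take $\delta=e_m$. Since $J_i$ is monomial, if $\XX^{\gamma+e_i}$ is not a minimal generator then $\XX^{\gamma+e_i-e_m}\in J_i$ for some $m$. The proof of \cref{lem:outer-minimal} then makes $\gamma-e_m\in\R_i(I)$ and $e_m$ an inner root. With this choice $s(\alpha)=s(\gamma)-1$ and $s(\delta)=1$. Order level by level as follows:
\begin{itemize}
\item At levels $s\geq2$, outer roots depend only on lower levels and can be placed before inner ones. This settles case (iii), since $3e_i-e_k$ then reduces to $(2e_i-e_k)+e_i$.
\item At level $1$, put the inner roots $e_m$ first. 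They depend only on $E_\mathrm{min}$ and degree-sum-zero roots, and the latter all lie in $\widetilde G(I)$.
\end{itemize}
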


\begin{proof}
Let $\gamma\in\mathcal R(I)$ be a root. It suffices to prove that $\mathfrak g_\gamma$ is generated by this set. Assume that $\gamma$ is inner. Then the result is immediate from (the proof of) \cref{lem:discardable} for all $\gamma$ except when $\gamma=2e_i$ is reducible. Indeed, if $\gamma=e_k$ for some $k$ then the outer roots involved belong to $\widetilde G(I)$ by \cref{rmk:outer-colon}, and in all other cases the decomposition in the induction process involves no outer roots, so passing from using all outer roots to only taking $\widetilde G(I)$ makes no difference. Assume then that $\gamma=2e_i$ is reducible. Then by \cref{prop:two-ei-criterion} and \cref{rmk:outer-colon}, we see that either $3e_i\in\supp(I)$, or $2e_i\in \supp(J_k)$ and $e_i+e_k\notin\supp(I)$, or $3e_i\in\supp(J_k)$ and $e_k\in\supp(J_i)$.

In all cases, we know from \cref{prop:two-ei-criterion} that $[\mathfrak g_{e_i},\mathfrak g_{e_i}]$ generates a subspace of $\mathfrak g_{2e_i}$ of codimension at most 1. In the first case, we actually know that $[\mathfrak g_{e_i},\mathfrak g_{e_i}]=\mathfrak g_{2e_i}$, so we are done. For the other two cases, it will suffice to prove that one can generate with brackets a derivation $\overline{\partial}_{2e_i,r}$ with $r(e_i)\neq0$.

In the second case, we have that $2e_i-e_k$, $e_i$ and $e_k$ are roots (the last two since $e_k\in E_{e_i}$ and $e_i\in E_{e_k}$). Then a direct computation gives $[\overline{\partial}_{2e_i-e_k,e_k^*},\overline{\partial}_{e_k,e_i^*}]=\overline{\partial}_{2e_i,e_i^*-2e_k^*}$.
Thus, if $2e_i-e_k\in\widetilde G(I)$ we are done. Otherwise, $\XX^{2e_i}\in J_k$ is not a generator and then we must have $\XX^{e_i}\in J_k$, in which case $e_i-e_k\in\widetilde G(I)$ is a root. Then \cref{lem:outer-reducible} tells us that $[\mathfrak g_{e_i-e_k},\mathfrak g_{e_i}]=\mathfrak g_{2e_i-e_k}$ and we can construct the same bracket as before.

In the third case, $3e_i-e_k$ is a root, and so are $e_k-e_i$ and $e_i$ (the latter since $e_i\in E_{e_i}$). Moreover, $\XX^{e_k}$ is clearly a generator of $J_i$, so that $e_k-e_i\in\widetilde G(I)$. Again a direct computation gives $[\overline{\partial}_{3e_i-e_k,e_k^*},\overline{\partial}_{e_k-e_i,e_i^*}]=\overline{\partial}_{2e_i,e_i^*-3e_k^*}$. Thus, if $3e_i-e_k\in\widetilde G(I)$ we are done. Otherwise, $\XX^{3e_i}\in J_k$ is not a generator of $J_k$ and then we must have either $\XX^{e_i}\in J_k$ or $\XX^{2e_i}\in J_k$. But then either automatically or by the argument from the previous case we get that $\mathfrak g_{2e_i-e_k}$ is already generated. Then \cref{lem:outer-reducible} gives again $[\mathfrak g_{2e_i-e_k},\mathfrak g_{e_i}]=\mathfrak g_{3e_i-e_k}$ and we conclude as above.\\

Assume now that $\gamma$ is outer. If $\gamma\in\widetilde G(I)$, the result is obvious. Assume then that $\gamma\notin\widetilde G(I)$ and that it is outer in the direction $e_i^*$. By \cref{lem:outer-minimal} we get a decomposition $\gamma=\alpha+\delta$ with $\alpha\in\R_i(I)$ and $\delta\in\ZZ_{\geq 0}^n$ nonzero. Note that $\delta_i=0$ since both $\alpha$ and $\gamma$ belong to $\R_i(I)$. Fix an index $k\neq i$ with $\alpha_k>0$, which exists because $s(\alpha)\geq 0$. Then $\delta$ is an inner root: indeed $\delta\leq\gamma+e_i=(\alpha+e_i)+\delta$ coordinate-wise and $\gamma+e_i\notin\supp(I)$, so $\delta\notin\supp(I)$; and $\alpha_k>0$ yields $\delta+e_k\leq\gamma+e_i$, so that $e_k\in E_\delta$. Then another direct computation yields
$[\overline{\partial}_{\alpha,e_i^*},\overline{\partial}_{\delta,e_k^*}]=\overline\partial_{\gamma,r}$ with $r=\delta_i e_k^*-\alpha_k e_i^*=-\alpha_k e_i^*$, which is nonzero. Hence the Lie bracket is nontrivial and generates the whole $1$-dimensional space $\mathfrak g_\gamma$. Since $\delta\neq0$, iterating strictly
decreases the summand in the componentwise order, so the process
terminates with a summand $\alpha$ in $\widetilde G(I)$ and (possibly several) summands in $\ZZ_{\geq 0}^n$.
\end{proof}

\subsection{General strategy} In order to state an actual algorithm for a minimal set of degrees (and, sometimes, generators), we need to impose first an order on the set of roots we will consider. The decomposition $G^0=R\rtimes L$ of
\cref{prop:semidirect,prop:levi} writes $G^0$ as a semidirect product, where
$L\cong\prod_{j=1}^r\GL_{m_j}(\KK)$ is the standard Levi factor and $R$ is the
unipotent radical, with
$$
  \operatorname{Lie}(R)=\bigoplus_{\gamma\in\Phi_R}\mathfrak g_\gamma,\qquad
  \Phi_R=\{\gamma\in\mathcal R(I): s(\gamma)>0\}
  \cup (\Sigma\smallsetminus\Sigma_L)=\R(I)\smallsetminus\Sigma_L,
$$
$\Sigma$ denoting the degree-sum-zero roots and $\Sigma_L\subseteq\Sigma$ those
occurring in opposite pairs.

\begin{lemma}\label{lem:order-PhiR}
There exists a linear functional $\Lambda\colon\ZZ^n\to\RR$ with
$\Lambda(\gamma)>0$ for every $\gamma\in\Phi_R$. Ordering $\Phi_R$ by the values
of $\Lambda$, and breaking ties by setting $e_i\in E_\mathrm{min}$ to be smaller than any other root with the same value not lying in $E_\mathrm{min}$, and arbitrarily otherwise, yields a total order $\geq_\Lambda$ such that:
\begin{itemize}
    \item $\Lambda(\delta)<\Lambda(\gamma)$ and hence $\delta<_\Lambda\gamma$ whenever $s(\delta)<s(\gamma)$ for roots $\delta,\gamma$;
    \item $\Lambda(\sigma)=0$ for every $\sigma\in\Sigma_L$;
    \item $e_i\leq_\Lambda \gamma$ for every $e_i\in E_\mathrm{min}$ and every $\gamma\in\Phi_R$ with $\Lambda(e_i)=\Lambda(\gamma)$;
    \item $i\alpha+j\beta>_\Lambda\max\{\alpha,\beta\}$ whenever $\alpha,\beta,\,i\alpha+j\beta\in\Phi_R$ and $i,j\geq1$.
\end{itemize}
\end{lemma}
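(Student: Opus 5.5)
We will build $\Lambda$ as a small perturbation of the absolute degree $s$. By \cref{prop:levi} and its proof, after permuting variables the roots in $\Sigma\smallsetminus\Sigma_L$ are positive roots $e_a-e_b$ of $\GL_n(\KK)$ that join different blocks of the partition $n=m_1+\cdots+m_r$ of the Levi factor, while the roots in $\Sigma_L$ are the $e_a-e_b$ with $a,b$ in the same block. Moreover, the proof of \cref{prop:levi} shows that $\Sigma\smallsetminus\Sigma_L$ is closed under addition with elements of $\Sigma$ whenever the sum is a root. Combining this with the Bourbaki result quoted there, we can order the blocks so that every $e_a-e_b\in\Sigma\smallsetminus\Sigma_L$ has $a$ in an earlier block than $b$.

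The definition is then as follows. Let $c_a=r-j$ when $a$ lies in block $j$, and set
$$\Lambda(\gamma)=s(\gamma)+\varepsilon\sum_a c_a\gamma_a$$
for a small $\varepsilon>0$.

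\begin{itemize}
\item If $\sigma\in\Sigma_L$, then $s(\sigma)=0$ and the two coordinates of $\sigma$ lie in the same block, so $\Lambda(\sigma)=0$.
\item If $\sigma\in\Sigma\smallsetminus\Sigma_L$, then $\Lambda(\sigma)=\varepsilon(c_a-c_b)>0$.
\item Every other $\gamma\in\Phi_R$ has $s(\gamma)\geq1$ by \cref{remark:-e_i}.
\end{itemize}

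Since $\mathcal R(I)$ is finite (the coordinates are bounded by cofiniteness and outer roots have only one coordinate equal to $-1$), we may choose $\varepsilon$ with $\varepsilon\sum_a|c_a||\gamma_a|<1/2$ for all roots. Then $\Lambda(\gamma)\geq 1/2>0$ for every root with $s(\gamma)\geq1$. The same choice gives the first bullet: if $s(\delta)<s(\gamma)$, then $\Lambda(\gamma)-\Lambda(\delta)\geq 1-2\cdot\tfrac12>0$, and a strict inequality can be secured by taking $\varepsilon$ slightly smaller.

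For the order itself, we sort by $\Lambda$. Within a fibre of $\Lambda$ we place the elements of $E_{\mathrm{min}}$ first, in any order among themselves, and then the remaining roots arbitrarily. This gives a total order, and the tie-breaking rule is exactly the third bullet. The second bullet was shown above.

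For the fourth bullet, take $\alpha,\beta\in\Phi_R$ and $i,j\geq1$. Linearity gives
$$\Lambda(i\alpha+j\beta)=i\Lambda(\alpha)+j\Lambda(\beta)>\max\{\Lambda(\alpha),\Lambda(\beta)\},$$
because both values are strictly positive. The inequality between $\Lambda$-values is strict, so the tie-breaking convention plays no role and we obtain $i\alpha+j\beta>_\Lambda\max\{\alpha,\beta\}$.

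The main obstacle is the positivity of $\Lambda$ on $\Sigma\smallsetminus\Sigma_L$: one has to make sure that the block ordering extracted from \cref{prop:levi} orients every non-Levi degree-sum-zero root consistently. If the Bourbaki positive-system argument needs extra justification, the fallback is a direct argument. Consider the digraph on blocks with an edge $B\to B'$ whenever some $e_a-e_b\in\Sigma$ has $a\in B$ and $b\in B'$ with $B\neq B'$. If this digraph had a cycle, the closure property (transitivity of roots, as in the argument proving that $\succ$ is transitive) would produce a root $e_b-e_a$ and hence put the two blocks in $\Sigma_L$. So the digraph is acyclic, and a topological sort supplies the constants $c_a$.
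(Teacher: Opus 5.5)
Your proof is correct and is essentially the paper's construction. Up to the scaling $\varepsilon=1/C$, your $\Lambda=s+\varepsilon\sum_a c_a\gamma_a$ is the paper's $C\,s(\gamma)+\sum_i\pi(i)\gamma_i$, with $\pi$ constant on the Levi blocks and $\pi(a)>\pi(b)$ whenever $e_a-e_b\in\Sigma\smallsetminus\Sigma_L$; the bullets then follow from finiteness of $\mathcal R(I)$ and linearity, exactly as there. The block ordering is justified by your fallback (acyclicity of the induced relation on blocks, via transitivity of $\succ$), which is precisely the paper's argument; the appeal to a Bourbaki positive system does not do this on its own, since such a system may interleave the blocks.
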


\begin{proof}
Consider the relation ``$e_i\succ e_j$ and $e_j\not\succ e_i$'', that is, $e_i-e_j\in\Phi_R$. It is transitive by the computation recalled above: transitivity of
$\succ$ gives $e_i-e_k\in\R(I)$, and if $e_k-e_i$ were a root as well, then the same computation applied to $e_k-e_i$ and $e_i-e_j$ would make $e_k-e_j$ a root, contradicting $e_j-e_k\in\Phi_R$. It is moreover acyclic: a cycle would produce a pair of opposite degree-sum-zero roots, which lies in $\Sigma_L$ and not in $\Phi_R$. Finally, it is compatible with the blocks of $L$: if $e_i-e_j\in\Phi_R$ and $e_{i'}-e_i\in\Sigma_L$, then $i'\neq j$ (otherwise $e_j-e_i$ and $e_i-e_j$ would both be roots) and transitivity of $\succ$ gives $e_{i'}-e_j\in\R(I)$; moreover $e_j-e_{i'}\notin\R(I)$, since otherwise $e_j-e_i=(e_j-e_{i'})+(e_{i'}-e_i)$ would be a root, so $e_{i'}-e_j\in\Phi_R$. Hence $e_i-e_j\in\Phi_R$ induces a relation on the set of blocks, which is also transitive and acyclic.

Choose $\pi\colon\{1,\dots,n\}\to\ZZ$ with $\pi(i)>\pi(j)$ whenever $e_i-e_j\in\Phi_R$ and constant on each block, so that it is increasing along the relation on the set of blocks. Set $\Lambda(\gamma)=C\,s(\gamma)+\sum_{i=1}^n\pi(i)\gamma_i$ for $C$ sufficiently
large so that the first assertion holds. Then $\Lambda>0$ on every root of $\Phi_R$ with $s(\gamma)>0$, and $\Lambda(e_i-e_j)=\pi(i)-\pi(j)\geq 0$ on those of degree sum zero, with equality if and only if $e_i-e_j\in\Sigma_L$. This is the second assertion. The third assertion is immediate by construction and the last assertion is immediate from linearity.
\end{proof}

\begin{algorithm}\label{strategy:generators}
Given a monomial ideal $I$ with cofinite support, proceed as follows.
\begin{enumerate}
\item[\textup{Step~1.}] Compute the Levi factor $L$ from \cref{prop:levi} by computing the roots $\alpha$ with $s(\alpha)=0$, which is the set $\Sigma$. If $\Sigma_L=\Sigma\cap-\Sigma=\varnothing$, skip directly to Step~2. Otherwise, for each block $\GL_{m_j}(\KK)$ on indices $i_1,\dots,i_{m_j}$, retain the roots $\pm(e_{i_1}-e_{i_2}),\dots,\pm(e_{i_{m_j-1}}-e_{i_{m_j}})$ and the corresponding root groups $U_\alpha$.\\

The Lie subalgebra generated by the $\mathfrak g_\alpha$ for the retained roots $\alpha$ corresponds to the Lie algebra of the Levi subgroup $L$, since the root groups $U_{\pm(e_{i_1}-e_{i_2})},\dots,U_{\pm(e_{i_{m_j-1}}-e_{i_{m_j}})}$, together with $T$, generate $\GL_{m_j}(\KK)$. Moreover, such a generating subset is minimal for every block, so the whole subset of retained roots is also minimal.\\

\item[\textup{Step~2.}] Compute $\widetilde G(I)$ from the colon ideals of \cref{rmk:outer-colon}. The same remark allows us to compute the set $\mathcal R(I)_1:=\{\alpha\in\mathcal R(I)\mid s(\alpha)=1\}$ (computing inner roots in this set is obvious).

Then, as in \cref{lem:discardable}, compute a set $E_\mathrm{min}\subset E\subset\R(I)_1$ such that, for every $e_k\in E\cap\R(I)_1$ there exists $e_i\in E_\mathrm{min}$ such that $e_k\succ e_i$ or $e_k=e_i$, but there is no relation $e_j\succ e_i$ with $e_i,e_j\in E_{\mathrm{min}}$ distinct. This can be done by the transitivity of $\succ$, proved right before \cref{lem:discardable}, and it contains at most one root per factor of the Levi subgroup.

Finally, compute the set of irreducible roots of the form $2e_i$ using \cref{prop:two-ei-criterion}.

Define the set
\[X = (\widetilde G(I)\smallsetminus\Sigma_L)\;\cup\; E_{\mathrm{min}}\;\cup\;\{2e_i\mid \text{irreducible}\}.\]
It is a subset of $\Phi_R$, and hence admits the total order $\geq_\Lambda$ from \cref{lem:order-PhiR}. This is also the case for $\mathcal R(I)_1$.\\

By \cref{prop:discardable2}, we know that the subalgebra spanned by $\bigoplus_{\alpha\in \Sigma_L\cup X} \mathfrak g_\alpha$ is the whole Lie algebra $\mathfrak g$ of $G$. By Step~1, this is still the case if we replace $\Sigma_L$ by the currently retained subset of roots. In the following steps we will discard some subspaces from $\bigoplus_{\alpha\in X} \mathfrak g_\alpha$.\\

\item[\textup{Step~3.}] Using the total order $\geq_\Lambda$ from \cref{lem:order-PhiR}, and assuming that we already have subsets $X_r,X_d\subseteq X$ of retained and discarded roots (both are empty at the end of Step 2), consider the smallest remaining element $\alpha\in X$ (i.e.~$\alpha\not\in X_r\cup X_d$). Then:
\begin{itemize}
    \item If $\alpha$ is outer, check whether it decomposes as a sum $\alpha=\alpha_0+\beta$ with $\alpha_0,\beta$ outer roots (this can be checked easily from \cref{rmk:outer-colon}, and no need for inner roots by \cref{lem:outer-minimal}) and $\alpha_0\in \Sigma_L\cup X$ smaller in the $\Lambda$-order. If $\beta$ is not of this form, seek recursively to decompose further (eventually with inner factors if $\beta\not\in\widetilde G(I)$, by \cref{lem:outer-minimal}) until all summands are in $\Sigma_L\cup X$ and smaller in the $\Lambda$-order. If such a decomposition exists, discard $\alpha$. Otherwise retain it and retain the root subgroup $U_\alpha$.
    \item If $\alpha=e_i$, compute the subspace $B^{\Lambda}_{e_i}\subseteq B_{e_i}$ spanned by those decompositions of \cref{prop:ei-Iouter-search} of types \textup{(B)} and \textup{(C)} whose summands are either in $\Sigma$ or are in $\mathcal R(I)_1$ and are smaller in the $\Lambda$-order.
    Choose a complement of it in $\mathfrak g_{e_{i}}$ (which is nonempty by \cref{prop:ei-Iouter-search}), fix a basis of this complement and retain the root subgroups $U_{e_i,p}$ for $p$ in the basis.
    \item If $\alpha=2e_i$, retain the root subgroup $U_{2e_i,e_i^*}$ and nothing else.
\end{itemize}

\item[\textup{Step~4.}] Iterate Step 3 until every degree in the set $X$ has been either retained or discarded, that is, until we have $X=X_r\sqcup X_d$.
\end{enumerate}
\end{algorithm}

\begin{theorem}\label{thm:minimal-generators}
Let $I$ be a monomial ideal with cofinite support. Let $G$ be the linear algebraic $\KK$-group of automorphisms of $\KK[\XX]/I$ and let $G^0$ be its neutral connected component. Then $G^0$ is generated by the maximal torus $T$ that acts on the variables $\overline x_i$ by scalars, and the subgroups $U_{\alpha,p}$ given by \cref{strategy:generators}.

Moreover, the set of degrees $\alpha$ is minimal, in the sense that, for any fixed $\alpha$, if we take away all the $U_{\alpha,p}$, then the set of subgroups ceases to generate $G^0$. And if the Levi subgroup of $G^0$ given by \cref{prop:levi} is equal to $T$ (equivalently, if $\Sigma_L=\varnothing$), then the set is minimal in the stronger sense that one cannot take away any single $U_{\alpha,p}$.

In particular, if $\mathfrak g,\,\mathfrak t,\,\mathfrak g_{\alpha,p}$ denote the Lie algebras of $G^0,\,T,\, U_{\alpha,p}$ respectively, then $\mathfrak g$ is generated by $\mathfrak t$ and the $\mathfrak g_{\alpha,p}$ and their brackets can be computed using \cref{rmk:lie-bracket} and the natural action of the torus on $\mathfrak g_\alpha$.
\end{theorem}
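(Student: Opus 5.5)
The proof splits into three parts: generation, minimality of the set of degrees, and the stronger minimality when $\Sigma_L=\varnothing$. In each part we work at the level of Lie algebras, using \cite[Section~13.1, Theorem]{Humphreys}: $T$ and a family of connected subgroups generate $G^0$ if and only if $\mathfrak t$ and their Lie algebras generate $\mathfrak g$.

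\emph{Generation.} We start from \cref{prop:discardable2}: the subalgebra generated by $\mathfrak t$, the $\mathfrak g_\alpha$ for $\alpha\in\Sigma_L$, and the $\mathfrak g_\alpha$ for $\alpha\in X$ is all of $\mathfrak g$. By Step~1, the retained roots of $\Sigma_L$ together with $\mathfrak t$ generate $\operatorname{Lie}(L)$, which contains every $\mathfrak g_\sigma$ with $\sigma\in\Sigma_L$. We then show, by induction along the total order $\geq_\Lambda$ of \cref{lem:order-PhiR}, that for each $\alpha\in X$ the whole space $\mathfrak g_\alpha$ lies in the subalgebra $\mathfrak h$ generated by $\mathfrak t$, $\operatorname{Lie}(L)$ and the retained $\mathfrak g_{\alpha,p}$. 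The cases are as follows.
\begin{enumerate}[(a)]
\item If $\alpha$ is outer and discarded, it has a decomposition into summands that are $<_\Lambda\alpha$ and lie in $\Sigma_L\cup X$. By induction these summands lie in $\mathfrak h$. Since $\dim\mathfrak g_\alpha=1$, \cref{lem:outer-reducible} applied repeatedly (and the bracket computation from the proof of \cref{prop:discardable2} for inner summands) shows that their iterated bracket spans $\mathfrak g_\alpha$.
\item If $\alpha$ is outer and retained, there is nothing to prove.
\item If $\alpha=e_i$, then $\mathfrak g_{e_i}=B^\Lambda_{e_i}\oplus(\text{retained complement})$. Each summand used in $B^\Lambda_{e_i}$ is either in $\Sigma$ or in $\R(I)_1$ and smaller. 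A smaller element of $\R(I)_1$ is either in $X$ (and handled by induction) or is an outer root of $\R_j(I)$ with $s=1$, recoverable via \cref{prop:discardable2} from smaller elements. The last point of \cref{lem:order-PhiR} guarantees that the order is respected.
\item If $\alpha=2e_i$ is irreducible, \cref{prop:two-ei-criterion} gives $\mathfrak g_{2e_i}=[\mathfrak g_{e_i},\mathfrak g_{e_i}]+\KK\,\overline\partial_{2e_i,e_i^*}$. Since $e_i<_\Lambda 2e_i$, the space $\mathfrak g_{e_i}$ is recovered from its reducers, and the retained $U_{2e_i,e_i^*}$ supplies the missing direction.
\end{enumerate}

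\emph{Minimality of degrees.} Fix a retained degree $\alpha$ and suppose all the $U_{\alpha,p}$ are removed. Because every retained $\alpha\in X$ lies in $\Phi_R$, we use a grading argument. The subalgebra generated by the remaining pieces is spanned by iterated brackets, and any bracket landing in degree $\alpha$ comes from a decomposition $\alpha=\sum\alpha_k$ into retained degrees. We check the cases separately.
\begin{enumerate}[(a)]
\item For $\alpha=2e_i$, irreducibility, i.e.\ $\mu(2e_i)=1$, says that such brackets lie in the hyperplane $B_{2e_i}$.
\item For $\alpha=e_i$, no decomposition of type (A) is available, since $e_i\in E_{\min}$ means there is no root $e_i-e_k$ toward another minimal element. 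The decompositions of types (B) and (C) land in the proper subspace $B_{e_i}\subseteq D_{e_i}$ of \cref{prop:ei-Iouter-search}.
\item For $\alpha$ outer, retention means no decomposition into smaller retained degrees exists. We would argue that the linear functional $\Lambda$ forces every summand of a decomposition of $\alpha$ to be $\Lambda$-smaller. Decompositions involving $\Sigma_L$ have $\Lambda=0$ contributions, which is why $\Sigma_L$ is allowed in the check.
\end{enumerate}
For the Levi roots, minimality is the classical statement for $\GL_{m_j}$ recorded in Step~1.

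\emph{Stronger minimality when $\Sigma_L=\varnothing$.} Here $\Lambda>0$ on every root, so the Lie algebra is positively graded by $\Lambda$ and each $\mathfrak g_\alpha$ splits as (brackets of strictly smaller pieces) $\oplus$ (a complement). For $e_i$ the retained basis spans a complement of $B^\Lambda_{e_i}$, which now equals the full bracket contribution. Removing one basis vector $p$ then leaves a generated space in degree $e_i$ of dimension one less, since brackets of higher-$\Lambda$ elements cannot land there. Outer and $2e_i$ degrees contribute one generator each, so this reduces to the previous paragraph.

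The main obstacle is the outer case of minimality. The discard test only searches decompositions whose first summand lies in $\Sigma_L\cup X$, so we must show that any decomposition through roots outside $X$ can be refined into one through $X$; this is the same refinement as in the proof of \cref{lem:outer-reducible}(iii). We would first try to prove this refinement lemma, tracking the $\Lambda$-order throughout.
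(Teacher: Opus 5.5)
Your generation argument matches the paper's: induction along $\geq_\Lambda$, with Step~1 supplying $\operatorname{Lie}(L)$ and the $e_i$ step being the only delicate one. So does your treatment of irreducible $2e_i$. The gap is in the minimality of degrees at the retained $e_i\in E_{\mathrm{min}}$.

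You claim that no type-(A) decomposition is available, but this is false whenever $e_i$ lies in a nontrivial Levi block. The condition $e_i\in E_{\mathrm{min}}$ only forbids roots $e_i-e_k$ with $e_k\in E_{\mathrm{min}}$. Any other root $e_k$ of the same block gives $e_i=(e_i-e_k)+e_k$, so $B_{e_i}=\mathfrak g_{e_i}$ and the bound ``brackets land in $B_{e_i}$'' says nothing. This already happens in \cref{ex:strategy-run}(d): $e_3$ is retained although $e_3=(e_3-e_1)+e_1$. The real danger is a chain of brackets that reaches $\mathfrak g_{e_1}$ and then passes to $e_3$ by a type-(A) step, and a one-step analysis cannot exclude this.

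The missing idea is the paper's propagation argument.
\begin{itemize}
\item Expand any element of degree $e_i$ into right-normed brackets along $e_i=\alpha_0+\cdots+\alpha_m$, with all partial sums $\beta_j$ roots. Use anticommutativity to arrange $s(\alpha_0)=1$ and $s(\alpha_j)=0$ for $j>0$.
\item By \cref{prop:ei-Iouter-search}, steps of types (B) and (C) land in $D_{e_j}=\{\overline\partial_{e_j,p}\mid p(e_j+\mathbf 1)=0\}$ whatever their input, and type-(A) steps send $D_{e_k}$ into $D_{e_j}$.
\item Hence if some $\beta_j$ is outer, the result lies in $D_{e_i}\subsetneq\mathfrak g_{e_i}$.
\item Otherwise every step is of type (A). This gives $e_i\succ\alpha_0$ with $\alpha_0\in E_{\mathrm{min}}\smallsetminus\{e_i\}$, contradicting the requirement that $E_{\mathrm{min}}$ contain no related pair.
\end{itemize}

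Your outer case has a second gap, and it is not the refinement issue you flag. That issue does not arise: the Step~3 discard test already searches chains with summands in $\Sigma_L\cup X$, which is exactly what an iterated bracket of retained generators provides. The actual problem is ties in $\Lambda$. Linearity and positivity of $\Lambda$ only give $\Lambda(\alpha_k)\leq\Lambda(\gamma)$, with equality when all other summands lie in $\Sigma_L$. Since ties in \cref{lem:order-PhiR} are broken arbitrarily, such a tied summand may come after $\gamma$. So ``$\Lambda$ forces every summand to be smaller'' fails as soon as $\Sigma_L\neq\varnothing$, including for $\gamma\in\Sigma\smallsetminus\Sigma_L$.

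The paper disposes of this case by showing that the tied summand could not have been retained:
\begin{itemize}
\item if it is outer, it equals $\gamma$ plus roots of $\Sigma_L$ with $\gamma$ earlier, so the algorithm would have discarded it;
\item if it is some $e_k\in E_{\mathrm{min}}$, the tie-break places it before $\gamma$;
\item a tied $2e_k$ is argued to be reducible.
\end{itemize}

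When $\Sigma_L=\varnothing$ neither gap arises. All summands are then strictly $\Lambda$-smaller, and a type-(A) decomposition of $e_i\in E_{\mathrm{min}}$ would force a pair of opposite roots. So your strong-minimality paragraph is sound, but only because your claim about type (A) happens to be true in that case.
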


\begin{proof}
The statement on the group $G^0$ follows immediately from the corresponding statement for $\mathfrak g$, so we focus on the latter.

By \cref{prop:discardable2}, we know that $\mathfrak t\oplus\bigoplus_{\alpha\in \Sigma_L\cup X} \mathfrak g_\alpha$ generates $\mathfrak g$. We claim that at each iteration of Step 3 the retained pieces generate the subalgebra $\bigoplus_{\alpha\in \Sigma_L\cup X_r\cup X_d} \mathfrak g_\alpha$. Since the algorithm stops when $X_r\cup X_d=X$, we get the whole algebra. To prove the claim, the only nontrivial part is for $\alpha=e_i$, where we use $\Sigma_L\cup\mathcal R(I)_1$ instead of $X$. In this case, we need to prove that the discarded subspace is generated by the previously retained derivations. For this it suffices to prove that $\mathfrak g_{\beta}$ is generated by the previously retained derivations for every $\beta\in\mathcal R(I)_1$ smaller than $e_i$ in the $\Lambda$-order. If $\beta$ is inner, we can assume this as an induction hypothesis. If $\beta$ is outer, by an iterated application of \cref{lem:outer-minimal,lem:outer-reducible}, we know that it can be written as a sum of (possibly several) elements in $\Sigma_L\cup X$ whose partial sums are all roots. And those summands in $X$ must be smaller in the $\Lambda$-order by \cref{lem:order-PhiR}. Then an iterated application of \cref{lem:outer-reducible} proves the claim.\\

Let us prove that the set of retained degrees $\alpha$ is minimal. Assume that there is a factor $\mathfrak g_\gamma$ with $\gamma\in X_r$ being generated via brackets by some other derivations in $\bigoplus_{\alpha\in (\Sigma_L\cup X_r)\smallsetminus\{\gamma\}} \mathfrak g_\alpha$. Then, for every such iterated bracket, we would have a decomposition $\gamma=\alpha_0+\alpha_1+\cdots+\alpha_m$ with $\alpha_i\in(\Sigma_L\cup X_r)\smallsetminus\{\gamma\}$ and $\beta_i:=\alpha_0+\cdots+\alpha_i\in\R(I)$ for every $i$. 

Assume first that $s(\gamma)=0$. If $\gamma>_\Lambda\alpha_i$ for every $i$, then $\gamma$ would have been discarded by construction. Consider then the first $i$ such that $\alpha_i>_\Lambda\gamma$. Then the partial sum $\beta_i$ would also be $>_\Lambda\gamma$ by the properties of $\Lambda$ (\cref{lem:order-PhiR}). But since $\Lambda$ is also linear, positive on $X_r$ and trivial on $\Sigma_L$, this implies that $\Lambda(\alpha_i)=\Lambda(\beta_i)=\Lambda(\gamma)$ and thus $\alpha_j\in\Sigma_L$ for every $j\neq i$. But then $\beta_{i-1}\in\Sigma_L$ since it is a root, so that $\alpha_i=\beta_i+(-\beta_{i-1})$ with $-\beta_{i-1}\in\Sigma_L$, and $\beta_{j-1}=\beta_j+(-\alpha_j)$ with $-\alpha_j\in\Sigma_L$ for every $j>i$. And since $\alpha_i>_\Lambda\gamma=\beta_m$ and $\gamma\in X$, it would have been discarded by the algorithm, so this case cannot occur.

We know then that the set of retained roots with $s(\gamma)=0$ is minimal as a generating set for $\bigoplus_{\alpha\in\Sigma} \mathfrak g_\alpha$. Since clearly no other root can contribute to this subalgebra, we may assume in what follows that $s(\gamma)>0$ and that the $\alpha_i$ lie in $(\Sigma\cup X_r)\smallsetminus\{\gamma\}$. In that case, if $\alpha_j$ is the first summand with $s(\alpha_j)>0$, we see that $\beta_{j-1}\in\Sigma$ since it is a root. But then we may assume that the first bracket is between $\beta_{j-1}$ and $\alpha_j$. And since brackets anticommute, we may swap these two and assume henceforth that $s(\alpha_0)>0$.

Assume that $\gamma$ is outer. Again, if $\gamma>_\Lambda\alpha_i$ for every $i$, then $\gamma$ would have been discarded by construction. This happens in particular if $\Lambda(\gamma)>\Lambda(\alpha_0)$, and thus we must have $\Lambda(\gamma)=\Lambda(\alpha_0)$ (it cannot be lesser by linearity) and thus $\alpha_i\in\Sigma_L$ for every $i>0$.  But then it must be $\alpha_0>_\Lambda\gamma$ and $\beta_{i-1}=\beta_i+(-\alpha_i)$ with $-\alpha_i\in\Sigma_L$ for every $i>0$. So, if $\alpha_0$ is outer, then it would have been discarded by the algorithm since $\gamma=\beta_m\in X$. If $\alpha_0$ is inner, it cannot be of the form $e_k\in E_\mathrm{min}$ since $\Lambda(e_k)=\Lambda(\gamma)$ implies $e_k<_\Lambda\gamma$. And if $\alpha_0=2e_k$, the decomposition $2e_k=\beta_1+(-\alpha_1)$ implies that $\alpha_0$ is reducible by (the proof of) \cref{prop:two-ei-criterion}, so this case is also impossible.

Assume that $\gamma$ is inner. It cannot be of the form $2e_k$ since it is irreducible. Then $\gamma=e_k\in E_{\mathrm{min}}$ and hence $s(\alpha_0)=1$ and $s(\alpha_i)=0$ for $i>0$. Recall from \cref{prop:ei-Iouter-search} that the image of decompositions of types \textup{(B)} and \textup{(C)} is always contained in $D_{e_i}=\{\overline\partial_{e_i,p}\in\mathfrak g_{e_i}\mid p(e_i+\mathbf{1})=0\}$ and that the image of $D_{e_i}$ via a decomposition of type \textup{(A)} is precisely $D_{e_k}\subsetneq \mathfrak g_{e_k}$, where $e_k$ is the target root. Let us prove that the image of the iterated bracket coming from the decomposition into $\alpha_i$'s lies in this subspace, which yields that one does not generate the whole $\mathfrak g_{e_k}$ with these decompositions. If any of the $\beta_i$ is an outer root, then we are done since then the image of every subsequent decomposition $\beta_j=\beta_{j-1}+\alpha_j$ with $\beta_j$ inner will fall into the corresponding subspace. So every $\beta_i$ must be inner and every decomposition $\beta_{i}=\beta_{i-1}+\alpha_i$ must be of type \textup{(A)}. But then we get $\beta_{i+1}\succ\beta_i$ for every $i$, and hence $e_k=\beta_m\succ \beta_0=\alpha_0$, which is impossible since $e_k\in E_{\mathrm{min}}$.

Finally, let us prove that the set of derivations is minimal if $L=T$, which is equivalent to $\Sigma_L=\varnothing$. The only derivations that could still be discarded are in $\mathfrak g_{e_i}$ for some $e_i\in X_r$, since for outer roots and inner roots of the form $2e_j$ we have retained a 1-dimensional subspace of $\mathfrak g_\alpha$. Now, the algorithm has already discarded every derivation arising from a decomposition of type \textup{(B)} or \textup{(C)} whose summands are smaller in the $\Lambda$-order. By the properties of $\Lambda$, there cannot be such a decomposition with a summand that is greater in the $\Lambda$-order. We are only left then with possible decompositions of type \textup{(A)}. But these are discarded as well by the argument above since $e_i\in E_\mathrm{min}$.
\end{proof}

\begin{remark}
The case $L=T$, or $\Sigma_L=\varnothing$, is indeed the generic case. In a randomly chosen monomial ideal, one should not be able to permute two variables $\overline x_i$ and $\overline x_j$ (in particular, there are no toric automorphisms). But any nontrivial block in the Levi subgroup yields variables that can be exchanged (and much more!).
\end{remark}

\begin{example}\label{ex:strategy-run}
We follow \cref{strategy:generators} step by step on the ideals of
\cref{ex:outer-colon,ex:two-ei,ex:ei-partial,ex:m2-outer-outer}. Throughout, $C$
denotes a sufficiently large constant and
$\Lambda(\gamma)=C\,s(\gamma)+\sum_{i}\pi(i)\gamma_i$ is the functional of
\cref{lem:order-PhiR}, for a choice of $\pi$ made explicit in each case.

\medskip\noindent\emph{(a) $I=(x_1^2,\,x_1x_2,\,x_2^3)$, the ideal of \cref{ex:outer-colon}.}
Here $\R(I)=\{e_1-e_2,\ -e_1+2e_2,\ e_2\}$, the first two being outer, and every
$\mathfrak g_\alpha$ is one-dimensional, so $\dim G^0=2+3=5$.

\emph{Step~1.} The only root of degree sum zero is $e_1-e_2$, so $\Sigma=\{e_1-e_2\}$ and
$\Sigma_L=\Sigma\cap-\Sigma=\varnothing$. Thus $L=T$ and we pass directly to Step~2.

\emph{Step~2.} By \cref{ex:outer-colon}, $\widetilde G(I)=\{-e_1+2e_2,\ e_1-e_2\}$ and
$\R(I)_1=\{-e_1+2e_2,\ e_2\}$. Here $E\cap\R(I)_1=\{e_2\}$, since $e_1$ is not a root, so
$E_{\mathrm{min}}=\{e_2\}$. Neither $2e_1$ nor $2e_2$ is a root, since $2e_1\in\supp(I)$ and
$E_{2e_2}=\varnothing$. Therefore
$$X=\{e_1-e_2,\ -e_1+2e_2,\ e_2\}.$$
The order forces $\pi(1)>\pi(2)$, so we take $\pi=(1,0)$; then
$\Lambda(e_1-e_2)=1$, $\Lambda(-e_1+2e_2)=C-1$ and $\Lambda(e_2)=C$, which is the order in
which Steps~3--4 process $X$.

\emph{Steps~3--4.} The degree $e_1-e_2$ admits no decomposition into outer roots at all, and
neither does $-e_1+2e_2$: the only candidate splitting would involve $-e_1+e_2$, which is not
a root. Both are retained. Finally, $E_{e_2}=\{e_2\}$, so $\dim\mathfrak g_{e_2}=1$, and the
type-\textup{(B)} decomposition
$$e_2=(e_1-e_2)+(2e_2-e_1)$$
has its first summand in $\Sigma$ and its second in $\R(I)_1$ with
$\Lambda(2e_2-e_1)=C-1<C=\Lambda(e_2)$, so it is taken into account; by
\cref{prop:ei-Iouter-search} it contributes $\overline\partial_{e_2,\,2e_1^*-e_2^*}$, whose
class in $\mathfrak g_{e_2}$ is nonzero. Hence $B^\Lambda_{e_2}=\mathfrak g_{e_2}$ and $e_2$
is discarded.

The algorithm returns
$$U_{e_1-e_2},\qquad U_{-e_1+2e_2}.$$
As $\Sigma_L=\varnothing$, \cref{thm:minimal-generators} applies in its strong form: neither
of the two subgroups can be removed.

\medskip\noindent\emph{(b) $I=(x_1^4,\,x_1^3x_2,\,x_2^3)$, the ideal of \cref{ex:two-ei}.}
Here $\R(I)$ consists of the two outer roots $-e_1+2e_2$ and $3e_1-e_2$ and the seven inner
roots $e_1,e_2,e_1+e_2,2e_1,2e_2,e_1+2e_2,2e_1+e_2$; one computes $\dim G^0=2+13=15$.

\emph{Step~1.} There is no root of degree sum zero, so $\Sigma=\Sigma_L=\varnothing$, $L=T$,
and we pass to Step~2.

\emph{Step~2.} The colon ideals give $\widetilde G_1=\{x_2^2\}$ and $\widetilde G_2=\{x_1^3\}$,
so $\widetilde G(I)=\{-e_1+2e_2,\ 3e_1-e_2\}$, while $\R(I)_1=\{-e_1+2e_2,\ e_1,\ e_2\}$. Both
$e_1$ and $e_2$ are roots and, as $\Sigma=\varnothing$, the relation $\succ$ is empty on
$E\cap\R(I)_1=\{e_1,e_2\}$; hence $E_{\mathrm{min}}=\{e_1,e_2\}$. By \cref{ex:two-ei}, $2e_1$
is irreducible while $2e_2$ is not. Therefore
$$X=\{-e_1+2e_2,\ e_2,\ e_1,\ 2e_1,\ 3e_1-e_2\}.$$
Since $\succ$ is empty, $\pi$ is unconstrained; taking $\pi=(2,1)$ orders $X$ as displayed,
with $\Lambda$-values $C$, $C+1$, $C+2$, $2C+4$ and $2C+5$.

\emph{Steps~3--4.} Neither $-e_1+2e_2$ nor $3e_1-e_2$ decomposes as a sum of outer roots, so
both are retained. For the two degrees $e_i$ we have $E_{e_1}=E_{e_2}=\{e_1,e_2\}$, so
$\dim\mathfrak g_{e_1}=\dim\mathfrak g_{e_2}=2$; moreover $\Sigma=\varnothing$, so $e_1$ and
$e_2$ admit no decomposition of type \textup{(B)} or \textup{(C)} whatsoever and
$B^\Lambda_{e_1}=B^\Lambda_{e_2}=0$: both planes are retained in full. Finally $2e_1$ is
irreducible and contributes $U_{2e_1,e_1^*}$.

The algorithm returns the seven subgroups
$$U_{-e_1+2e_2},\quad U_{3e_1-e_2},\quad U_{2e_1,e_1^*},\quad
U_{e_1,e_1^*},\quad U_{e_1,e_2^*},\quad U_{e_2,e_1^*},\quad U_{e_2,e_2^*}.$$
Again $\Sigma_L=\varnothing$, so none of them can be removed.

\medskip\noindent\emph{(c) $I=\M^2\subset\KK[x_1,x_2,x_3]$, the ideal of
\cref{ex:m2-outer-outer}.} Here $\R(I)=\{e_i-e_j\mid i\neq j\}$ and $\dim G^0=3+6=9$.

\emph{Step~1.} $\Sigma=\Sigma_L=\R(I)$, giving a single block $\GL_3(\KK)$ on
$\{1,2,3\}$; we retain $\pm(e_1-e_2)$ and $\pm(e_2-e_3)$ together with the four
corresponding root groups.

\emph{Step~2.} By \cref{ex:m2-outer-outer}, $\widetilde G(I)=\R(I)=\Sigma_L$, so
$\widetilde G(I)\smallsetminus\Sigma_L=\varnothing$. Here $\R(I)_1=\varnothing$; in
particular no $e_i$ is a root, so $E\cap\R(I)_1=\varnothing$ and
$E_{\mathrm{min}}=\varnothing$. No $2e_i$ is a root either. Hence $X=\varnothing$.

\emph{Steps~3--4.} Nothing to do.

The algorithm returns $U_{\pm(e_1-e_2)}$ and $U_{\pm(e_2-e_3)}$, the standard set of root
subgroups generating $G^0=\GL_3(\KK)$, in accordance with
\cref{ex:reducible-not-discardable}. This is the case $d=2$ of the family
$\KK[\XX]/\M^d$, whose roots are described in general in \cref{lem:anick-roots}.

This family is minimal, but only in the sense of inclusion: removing any one of the
four subgroups leaves a subalgebra of dimension $7$, yet the three subgroups
$U_{e_1-e_2}$, $U_{e_2-e_3}$, $U_{e_3-e_1}$ already generate, together with $\mathfrak t$, the whole
$\mathfrak{gl}_3(\KK)$. So the output of Step~1 need not have least cardinality among the
generating families of root subgroups; what \cref{thm:minimal-generators} asserts is that no
proper subfamily of the returned one generates.

\medskip\noindent\emph{(d) $I=(x_1^2,\,x_2^2,\,x_3^2,\,x_1x_3)$, the ideal of
\cref{ex:ei-partial}.} Here
\[\R(I)=\{\pm(e_1-e_3),\ e_1,\ e_2,\ e_3,\ e_1+e_2-e_3,\ -e_1+e_2+e_3\},\]
with $\dim\mathfrak g_{e_2}=2$ and all the other root spaces of dimension one, so
$\dim G^0=3+8=11$.

\emph{Step~1.} $\Sigma=\Sigma_L=\{\pm(e_1-e_3)\}$, which gives a single nontrivial block
$\GL_2(\KK)$ on the indices $\{1,3\}$. We retain $\pm(e_1-e_3)$ and the root groups
$U_{e_1-e_3}$ and $U_{e_3-e_1}$.

\emph{Step~2.} From \cref{ex:ei-partial}, $J_1=J_3=(x_1,x_2^2,x_3)$ and
$J_2=(x_1^2,x_1x_3,x_2,x_3^2)$, so that $\widetilde G_1=\{x_3\}$, $\widetilde G_3=\{x_1\}$,
$\widetilde G_2=\varnothing$ and $\widetilde G(I)=\{\pm(e_1-e_3)\}\subseteq\Sigma_L$; thus
$\widetilde G(I)\smallsetminus\Sigma_L=\varnothing$. Moreover
$\R(I)_1=\{e_1,\ e_2,\ e_3,\ e_1+e_2-e_3,\ -e_1+e_2+e_3\}$. All three $e_i$ are roots and the
relation $\succ$ on $E\cap\R(I)_1=\{e_1,e_2,e_3\}$ consists of $e_1\succ e_3$ and
$e_3\succ e_1$, so its classes are $\{e_1,e_3\}$ and $\{e_2\}$, both minimal, and
$E_{\mathrm{min}}$ picks one element from each: we take $E_{\mathrm{min}}=\{e_2,e_3\}$
(choosing $e_1$ instead of $e_3$ works symmetrically). No $2e_i$ is a root. Therefore
$$X=\{e_2,\ e_3\}.$$
Since $\Sigma\smallsetminus\Sigma_L=\varnothing$, the only constraint on $\pi$ is that it be
constant on the block $\{1,3\}$; we take $\pi=(1,0,1)$, so that $\Lambda(e_2)=C$ and
$\Lambda(e_1)=\Lambda(e_3)=C+1$.

\emph{Steps~3--4.} For $e_2$ we have $E_{e_2}=\{e_1,e_3\}$ and, by \cref{ex:ei-partial}, its
only decompositions are the two of type \textup{(C)}
$$e_2=(e_1-e_3)+(e_2-e_1+e_3)=(e_3-e_1)+(e_2+e_1-e_3).$$
In both, the first summand lies in $\Sigma$ and the second lies in $\R(I)_1$, so what decides
is whether the second summand precedes $e_2$ in the $\Lambda$-order. It does not: the two
second summands differ from $e_2$ by an element of $\Sigma_L$, on which $\Lambda$ vanishes, so
that
$$\Lambda(e_2-e_1+e_3)=\Lambda(e_2+e_1-e_3)=\Lambda(e_2)=C$$
for every $\pi$ allowed by \cref{lem:order-PhiR}, and the tie-breaking of
\cref{lem:order-PhiR} places $e_2\in E_{\mathrm{min}}$ before both. Hence
$B^{\Lambda}_{e_2}=0$ and the whole plane $\mathfrak g_{e_2}$ is retained, contributing the
two subgroups $U_{e_2,e_1^*}$ and $U_{e_2,e_3^*}$.

For $e_3$ we have $E_{e_3}=\{e_2\}$, so $\dim\mathfrak g_{e_3}=1$. Its only decomposition is
the type-\textup{(A)} one $e_3=(e_3-e_1)+e_1$, which does not enter $B^\Lambda_{e_3}$, only
the types \textup{(B)} and \textup{(C)} being used there. Thus $B^{\Lambda}_{e_3}=0$ and we
retain $U_{e_3,e_2^*}$.

The algorithm returns the five subgroups
$$U_{e_1-e_3},\quad U_{e_3-e_1},\quad U_{e_2,e_1^*},\quad U_{e_2,e_3^*},\quad U_{e_3,e_2^*}.$$
Note that $e_1\notin X$: the space $\mathfrak g_{e_1}$ is recovered afterwards from the
type-\textup{(A)} decomposition $e_1=(e_1-e_3)+e_3$. This is precisely the circularity of
\cref{ex:reducible-not-discardable}: both $e_1$ and $e_3$ are reducible, but they recover each
other, and since $\Lambda(e_1)=\Lambda(e_3)$ it is the choice of $E_{\mathrm{min}}$ in Step~2,
not the order $\Lambda$, that decides which one survives.

This example also shows that the family returned by \cref{strategy:generators} need
not be minimal as a family of subgroups, so that the last assertion of
\cref{thm:minimal-generators} really does require $\Sigma_L=\varnothing$. Its set of
\emph{degrees} is minimal: discarding all the subgroups attached to any one of $e_1-e_3$,
$e_3-e_1$, $e_2$ or $e_3$ leaves a proper subgroup of $G^0$. But the two retained directions
of $\mathfrak g_{e_2}$ are not independent of one another, since by \cref{rmk:lie-bracket}
$$\bigl[\,\overline\partial_{e_1-e_3,\,e_3^*},\,
\bigl[\overline\partial_{e_3-e_1,\,e_1^*},\,\overline\partial_{e_2,\,e_1^*}\bigr]\,\bigr]
=\overline\partial_{e_2,\,e_1^*-e_3^*},$$
so that $U_{e_2,e_1^*}$ together with the two root subgroups of the Levi factor already
recovers $\overline\partial_{e_2,e_3^*}$, and symmetrically with the roles of $e_1^*$ and
$e_3^*$ exchanged. Either one of $U_{e_2,e_1^*}$ and $U_{e_2,e_3^*}$ may therefore be removed, although not both, and the resulting family of four subgroups still generates $G^0$.
The algorithm cannot see this: the two decompositions of $e_2$ that would reveal it are
exactly the two type-\textup{(C)} ones discarded above.
\end{example}

\section{An application: Anick's theorem}\label{sec:anick}

In this last section we apply the description of \cref{sec:generators} to the algebras
$\KK[\XX]/\M^d$, where $\M=(x_1,\dots,x_n)\subset\KK[\XX]$, and we deduce Anick's theorem
on the density of tame automorphisms \cite{Anick83}. Throughout, $n\geq2$ and $d\geq2$; we
write $A_d=\KK[\XX]/\M^d$ and $G=\Aut_\KK(A_d)$, and we set
$\mathbf 1=e_1+\cdots+e_n\in M$. Note that $\M^d$ has cofinite support and that
$x_i\notin\supp(\M^d)$, so all the results above apply.

Recall that the tame subgroup of $\Aut_\KK(\KK[\XX])$ is the subgroup generated by
the affine automorphisms and by the elementary ones, that is those of the form
\[x_j\mapsto x_j+f(x_1,\dots,\widehat{x_j},\dots,x_n),\qquad
x_i\mapsto x_i\;\text{ if } i\neq j,\]
where $\widehat{x_j}$ means that the variable $x_j$ is being omitted.

\begin{lemma}\label{lem:anick-roots}
The inner roots of $\M^d$ are $\{\alpha\in\ZZ^n_{\geq 0}\ :\ 1\leq s(\alpha)\leq d-2\}$, while the outer roots are given by $\R_j(\M^d)=\{\beta-e_j\ :\ \beta\in\ZZ^n_{\geq0},\ \beta_j=0,\ 1\leq s(\beta)\leq d-1\}$, with $1\leq j\leq n$.
Moreover:
\begin{enumerate}[(i)]
\item $E_\alpha=E$ for every inner root $\alpha$, so that $N(\alpha)=N_\KK$ and
      $\dim\mathfrak g_\alpha=n$;
\item $\Sigma=\Sigma_L=\{e_i-e_j\mid i\neq j\}$, so that $L\cong\GL_n(\KK)$ and $G=G^0$;
\item every inner root is reducible.
\end{enumerate}
\end{lemma}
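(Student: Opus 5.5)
We verify each claim directly from the definitions, using that $\supp(\M^d)=\{m\in\ZZ^n_{\geq0}: s(m)\geq d\}$.

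\emph{Inner roots.} For $\alpha\in\ZZ^n_{\geq0}$ nonzero we have $\alpha+e_i\notin\supp(\M^d)$ if and only if $s(\alpha)\leq d-2$, and this holds for every $i$ simultaneously. So $E_\alpha$ is either all of $E$ (when $1\leq s(\alpha)\leq d-2$) or empty. By \cref{def:roots} this gives both the description of the inner roots and (i): $E_\alpha=E$, hence $N(\alpha)=N_\KK$, and $\dim\mathfrak g_\alpha=n$ by \cref{lem:nonzero-deriv}.

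\emph{Outer roots.} Take an outer degree $\alpha=\beta-e_j$ satisfying conditions \eqref{eq:outer-i}--\eqref{eq:outer-ii}, i.e.\ $\beta\geq0$ with $\beta_j=0$.
\begin{itemize}
\item[] Nontriviality: $E_\alpha\neq\varnothing$ means exactly $\beta=\alpha+e_j\notin\supp(\M^d)$, i.e.\ $s(\beta)\leq d-1$.
\item[] Condition \eqref{eq:outer-iii}: check it on the generators $x^m$ with $s(m)=d$. If $m_j\geq1$, then $m+\alpha=m-e_j+\beta$ has degree sum $d-1+s(\beta)$, which is $\geq d$ exactly when $s(\beta)\geq1$. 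If $m_j=0$, then $m+\alpha\notin\ZZ^n_{\geq0}$ and there is nothing to check.
\end{itemize}
Together with \cref{remark:-e_i} excluding $\beta=0$, this gives the stated $\R_j(\M^d)$.

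\emph{Item (ii).} The roots with $s=0$ are the outer ones with $s(\beta)=1$, i.e.\ $\beta=e_i$ with $i\neq j$. Since $d\geq2$, all $e_i-e_j$ with $i\neq j$ are roots, so $\Sigma=-\Sigma=\Sigma_L$. By \cref{prop:levi} the Levi factor is then all of $\GL_n(\KK)$.

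For $G=G^0$ we use \cref{thm:dlmr}(iii): $G/G^0$ is generated by toric automorphisms. Each toric automorphism is a permutation of the variables, which lies in $\GL_n(\KK)=L\subseteq G^0$ acting linearly on the variables. Here we also use that $\M^d$ is symmetric, so every such permutation is indeed an automorphism.

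\emph{Item (iii).} Let $\gamma$ be an inner root.
\begin{itemize}
\item[] If $\gamma$ has at least two nonzero coordinates, it is reducible by \cref{lem:recovery-mixed}.
\item[] If $\gamma=\ell e_i$ with $\ell\geq3$, it is reducible by \cref{lem:recovery-multiple}.
\item[] If $\gamma=2e_i$, it is a root only when $d\geq4$. We apply \cref{prop:two-ei-criterion}(ii) with any $k\neq i$: the degree $2e_i-e_k$ is an outer root since $s(2e_i)=2\leq d-1$, and $e_i+e_k\notin\supp(\M^d)$ since $2<d$.
\item[] If $\gamma=e_i$, a type (A) decomposition exists because $e_i-e_k$ is a root, so $B_{e_i}=\mathfrak g_{e_i}$ by \cref{prop:ei-Iouter-search}.
\end{itemize}

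The only delicate point is condition \eqref{eq:outer-iii} for outer roots: one has to see that only generators with $m_j\geq1$ matter, and that these force $s(\beta)\geq1$. Everything else is bookkeeping.
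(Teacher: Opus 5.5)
Your proof is correct and follows the paper's argument essentially step by step. You use the same computation of $E_\alpha$ from $\supp(\M^d)=\{m : s(m)\geq d\}$, the same appeal to \cref{prop:levi} and \cref{thm:dlmr}(iii) for (ii), and for (iii) the same combination of \cref{lem:recovery-mixed,lem:recovery-multiple}, case (ii) of \cref{prop:two-ei-criterion}, and a type (A) decomposition. The differences are cosmetic: you check condition \eqref{eq:outer-iii} on the generators by splitting on $m_j$, where the paper uses additivity of $s$ on all of $\supp(\M^d)$. For the type (A) decomposition you should also say explicitly that $e_k$ is an inner root, which is immediate since $s(e_k)=s(e_i)$.
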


\begin{proof}
We have $\supp(\M^d)=\{\mathbf{m}\in\ZZ^n_{\geq0}: s(\mathbf{m})\geq d\}$ and hence, by definition,
\begin{align*}
E_\alpha &=\{e_i\in E\mid \alpha+e_i\in\ZZ^n_{\geq0}\text{ and } s(\alpha+e_i)\leq d-1\}\\
&=\{e_i\mid \alpha+e_i\in\ZZ^n_{\geq0}\text{ and } s(\alpha)\leq d-2\}.
\end{align*}
If $\alpha$ is inner, then $\alpha+e_i\in\ZZ^n_{\geq0}$ for every $i$, so $E_\alpha$ is
either empty or all of $E$, and it is nonempty exactly when $s(\alpha)\leq d-2$. This
proves the description of the inner roots and \textup{(i)}.

Now, let $\alpha$ be a degree with $\alpha_j=-1$ and $\alpha_i\geq0$ for $i\neq j$. Condition \eqref{eq:outer-iii} for outer roots holds if and only if $s(\alpha)\geq0$ since $s(\mathbf m+\alpha)=s(\mathbf m)+s(\alpha)$. On the other hand, $E_\alpha\neq\varnothing$ if and only if $e_j\in E_\alpha$, that is, if and only if $s(\alpha)\leq d-2$. Writing
$\beta=\alpha+e_j$ turns these two conditions into $1\leq s(\beta)\leq d-1$, which gives
the description of the outer roots.

For \textup{(ii)}, the degrees with sum zero are the $e_i-e_j$ with $i\neq j$, and all of them are outer roots by the description above. Hence
$\Sigma=\Sigma_L$ and \cref{prop:levi} yields $L\cong\GL_n(\KK)$. In particular the toric automorphisms, which act on $\M/\M^2$ by permutation matrices, lie in $L\subseteq G^0$. And since they generate $G/G^0$
by \cref{thm:dlmr}, we get $G=G^0$.

Finally, let $\alpha$ be an inner root and let us see that it is reducible. By \cref{lem:recovery-mixed,lem:recovery-multiple}, we only need to check this for $\alpha=e_i$ or $\alpha=2e_i$. If $\alpha=2e_i$, either $d\leq3$ and $2e_i$ is not even a root; or $d\geq 4$ and then by the description above $2e_i-e_k$ is an outer root and $e_i+e_k\notin\supp(\M^d)$ for any $k\neq i$, so case~\textup{(ii)} of \cref{prop:two-ei-criterion} applies. If $\alpha=e_i$, then $e_i-e_k$ is an outer root and $e_k$ is an inner root for any $k\neq i$, so $e_i$ admits a decomposition of type
\textup{(A)} and \cref{prop:ei-Iouter-search} gives $B_{e_i}=\mathfrak g_{e_i}$.
\end{proof}

\begin{remark}\label{rmk:anick-not-discardable}
As \cref{ex:reducible-not-discardable} warns, \cref{lem:anick-roots}\,\textup{(iii)} does
not say that the inner root subgroups may all be discarded. In fact they cannot as soon as $d\geq3$ (for $d=2$ there are no inner roots): the
relation $\succ$ is complete on $E$, so $E_{\mathrm{min}}$ consists
of a single $e_i$; and since every decomposition of $e_i$ of type \textup{(B)} or \textup{(C)} has a summand with the same $\Lambda$-value as $e_i$, which is larger than $e_i$ in the $\Lambda$-order by \cref{lem:order-PhiR}, none of them is taken into account and
\cref{strategy:generators} retains the whole of $\mathfrak g_{e_i}$, a space of dimension $n$
by \cref{lem:anick-roots}\,\textup{(i)}. What we prove below is that the subgroup generated by the torus and the
outer root subgroups is exactly the subgroup of automorphisms with constant Jacobian
determinant, which does not account for the whole $\mathfrak g_{e_i}$, but it is all that the application requires.
\end{remark}

\medskip

We now make the Jacobian operator available on $A_d$. Every class in $A_d$ has a unique
representative of degree $<d$; given $\varphi\in G$, let $F_\varphi$ be the endomorphism of $\KK[\XX]$ induced by the $n$-tuple $(F_1,\dots,F_n)$ of representatives of degree $<d$ of the $\varphi(\overline x_i)$, and put
$$
\jac(\varphi)\ :=\ \det\Bigl(\frac{\partial F_i}{\partial x_j}\Bigr)_{i,j}
\ \in\ \KK[\XX]/\M^{d-1}.
$$

This is well defined: changing an $F_i$ by an element of $\M^d$ changes each entry of the
matrix by an element of $\M^{d-1}$, hence the determinant as well. In the same way we get a
well-defined divergence operator
$$
\divi\colon \Der(A_d)\longrightarrow \KK[\XX]/\M^{d-1},\qquad
\divi\Bigl(\sum_i f_i\frac{\partial}{\partial x_i}\Bigr)=\sum_i\frac{\partial f_i}{\partial x_i},
$$
computed on any lift to $\Der(\KK[\XX])$. Since
$\partial_{\alpha,p}=\sum_i p(e_i)\,\XX^{\alpha+e_i}\,\partial/\partial x_i$, one gets immediately
\begin{equation}\label{eq:divergence}
\divi(\overline\partial_{\alpha,p})=p(\alpha+\mathbf 1)\,\XX^{\alpha}
\quad\text{for $\alpha$ inner},
\qquad
\divi(\overline\partial_{\alpha,e_j^*})=0\quad\text{for $\alpha\in\R_j(\M^d)$},
\end{equation}
the second equality because $(\alpha+e_j)_j=0$.

\begin{lemma}\label{lem:anick-jac}
With notations as above:
\begin{enumerate}[(i)]
\item $\jac(\varphi\psi)=\jac(\varphi)\cdot\bigl(\jac(\psi)\circ F_\varphi\bigr)$ for every $\varphi,\psi\in G$. Therefore $G^{\jac}:=\{\varphi\in G\mid\jac(\varphi)\in\KK^\times\}$ is a subgroup of $G$ and $\jac\colon G^{\jac}\to\KK^\times$ is a group homomorphism.
\item If $\overline{\partial}\in\operatorname{Lie}(U_1)$ has lowest nonzero homogeneous component $\overline{\partial}_k\in \bigoplus_{s(\alpha)=k}\mathfrak g_\alpha$, $k\geq 1$, then $\jac(\exp \overline{\partial})=1+\divi(\overline{\partial}_k)+(\text{terms of higher degree})$.
\end{enumerate}
\end{lemma}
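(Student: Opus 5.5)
For (i), I will lift to $\KK[\XX]$ and use the classical chain rule, reducing modulo $\M^{d-1}$ at the end. First I fix the composition convention. Let $\varphi\psi$ act on $\overline x_i$ by $\varphi(\psi(\overline x_i))$. Then $\varphi\psi(\overline x_i)=\psi(\overline x_i)$ evaluated at $\varphi(\overline x_1),\dots,\varphi(\overline x_n)$, so the polynomial $(F_\psi)_i(F_{\varphi,1},\dots,F_{\varphi,n})$ is a representative of $\varphi\psi(\overline x_i)$. It need not have degree $<d$. However, since $\varphi$ preserves $\M$, each $F_{\varphi,j}$ has zero constant term, and the truncation to degree $<d$ differs from it by an element of $\M^d$. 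By the well-definedness remark already made for $\jac$, any representative may be used, at the cost of an error in $\M^{d-1}$.

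The chain rule in $\KK[\XX]$ gives
\[\jac\bigl(F_\psi\circ F_\varphi\bigr)=\bigl(\jac(F_\psi)\circ F_\varphi\bigr)\cdot \jac(F_\varphi).\]
Since $F_\varphi$ maps $\M$ into $\M$, composition with $F_\varphi$ preserves $\M^{d-1}$. Hence $\jac(\psi)\circ F_\varphi$ is well defined in $\KK[\XX]/\M^{d-1}$, and the formula descends. If the opposite composition convention is intended, the roles of $\varphi$ and $\psi$ swap symmetrically, so I would just check which convention matches the statement.

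For the group statement, note that if $\jac(\psi)=c$ is constant then $c\circ F_\varphi=c$. The formula then gives multiplicativity on $G^{\jac}$, and $\jac(\mathrm{id})=1$. For inverses, $1=\jac(\varphi\varphi^{-1})=\jac(\varphi)\cdot(\jac(\varphi^{-1})\circ F_\varphi)$. If $\jac(\varphi)=c\in\KK^\times$, then $\jac(\varphi^{-1})\circ F_\varphi=c^{-1}$. Composing with $F_{\varphi^{-1}}$, which inverts $F_\varphi$ modulo $\M^d$ (hence on $\KK[\XX]/\M^{d-1}$), yields $\jac(\varphi^{-1})=c^{-1}$. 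This shows that $G^{\jac}$ is a subgroup and $\jac$ is a homomorphism.

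For (ii), I write $\overline\partial=\overline\partial_k+(\text{components with }s>k)$ and lift each homogeneous component to $\partial_{\alpha,p}$ on $\KK[\XX]$. Then $\exp\overline\partial$ sends $\overline x_i$ to $x_i+\partial(x_i)+\tfrac12\partial^2(x_i)+\cdots$. Here $\partial(x_i)$ has polynomial degree $\geq k+1$, and $\partial^m(x_i)$ has degree $\geq mk+1$. The Jacobian matrix is therefore $\mathrm{Id}+J(\partial)+R$, where $J(\partial)=(\partial f_i/\partial x_j)$ for $\partial=\sum f_i\,\partial/\partial x_i$. Its entries have degree $\geq k$, while the entries of $R$ come from $\partial^m(x_i)$ with $m\geq 2$ and so have degree $\geq 2k\geq k+1$.

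Expanding the determinant gives $1+\operatorname{tr}J(\partial)+(\text{products of at least two entries, of degree}\geq 2k)+(\text{terms from }R)$. Since $\operatorname{tr}J(\partial)=\divi(\partial)=\divi(\overline\partial_k)+(\text{degree}>k)$, the claim follows. The lowest-order term is exactly $\divi(\overline\partial_k)$, which is homogeneous of degree $k$ in polynomial grading by \eqref{eq:divergence}.

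The main obstacle I expect is bookkeeping rather than substance. I need to be careful that the grading by absolute degree $s(\alpha)$ matches polynomial degree, namely that $\partial_{\alpha,p}$ raises degree by $s(\alpha)$. I also need to check that the truncation modulo $\M^d$ does not interfere; it cannot, since everything is computed modulo $\M^{d-1}$ after differentiation.
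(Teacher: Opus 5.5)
Your proposal is correct and follows essentially the same route as the paper. For (i), you lift to $\KK[\XX]$, apply the chain rule to $F_\psi\circ F_\varphi$, and descend modulo $\M^{d-1}$ using that $F_\varphi$ preserves $\M$; for inverses, you compose with $F_{\varphi^{-1}}$, which is the paper's remark that $F_\varphi$ induces an automorphism of $\KK[\XX]/\M^{d-1}$. For (ii), you expand $\det(\mathrm{Id}+X)$ with every entry of $X$ of degree $\geq k$ and read off the trace, exactly as the paper does. Your convention $\varphi\psi=\varphi\circ\psi$ already gives the stated formula, so no hedging about the composition order is needed.
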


\begin{proof}
\textup{(i)} If $F=F_\varphi$ and $H=F_\psi$, then $F_{\varphi\psi}$ is the truncation of $H\circ F$ in degrees $<d$, and the chain rule gives
$\det J_{H\circ F}=(\det J_H\circ F)\cdot\det J_F$. Since $F_i\in\M$, $F$ maps $\M^{d-1}$ into itself and therefore descends to $\KK[\XX]/\M^{d-1}$, which yields the
stated formula. If $\jac(\varphi)=c$ and $\jac(\psi)=c'$ with $c,c'\in\KK$, the formula gives $\jac(\varphi\psi)=cc'$; and applying it to $\psi=\varphi^{-1}$ gives
$(\jac(\varphi^{-1})\circ F)\,c=1$, so $\jac(\varphi^{-1})$ is constant since $F$ is an automorphism of
$\KK[\XX]/\M^{d-1}$.

\textup{(ii)} Since $\overline{\partial}$ is nilpotent, $F_{\exp \overline{\partial}}$ is the truncation of
$x_i+\partial(x_i)+\tfrac12\partial^2(x_i)+\cdots$. The term $\partial(x_i)$ has lowest degree $k+1$ and
$\partial^{\ell}(x_i)$ has lowest degree $\ell k+1>k+1$ for $\ell\geq2$, so
$J_{F_{\exp \overline{\partial}}}=\mathrm{Id}+J_{\overline{\partial}_k}+(\text{terms of degree}>k)$. Expanding the determinant,
every summand other than the product of the diagonal entries involves at least two
off-diagonal entries and so has degree $\geq2k>k$; hence
$\det=1+\operatorname{tr}J_{\overline{\partial}_k}+(\text{terms of degree}>k)$, and
$\operatorname{tr}J_{\overline{\partial}_k}=\divi(\overline{\partial}_k)$.
\end{proof}

\begin{proposition}\label{prop:anick-outer}
Let $H\subseteq G$ be the subgroup generated by the maximal torus $T$ and the root
subgroups $U_\alpha$, with $\alpha\in\R(\M^d)$ outer. Then $H=G^{\jac}$. Moreover
$$\operatorname{Lie}(H)=\mathfrak g_0\oplus\bigoplus_{\alpha\text{ outer}} \mathfrak g_\alpha\oplus\bigoplus_{\alpha\text{ inner}}\{\overline{\partial}\in\mathfrak g_\alpha\mid \divi(\overline{\partial})=0\},$$
so that $H$ has codimension $\binom{n+d-2}{n}-1$ in $G$, the number of inner roots.
\end{proposition}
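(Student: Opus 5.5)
The plan is to prove the two inclusions $H\subseteq G^{\jac}$ and $\operatorname{Lie}(G^{\jac})\subseteq\operatorname{Lie}(H)$, and then close the gap with a connectedness argument. Write $\mathfrak h$ for the right-hand side of the displayed formula.

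\emph{Step 1: $H\subseteq G^{\jac}$.} By \cref{lem:anick-jac}\,(i), $G^{\jac}$ is a subgroup, so it suffices to check generators.
\begin{enumerate}[(a)]
\item A torus element acts diagonally on the variables, so its Jacobian determinant is the product of the scalars, a nonzero constant.
\item For $\alpha\in\R_j(\M^d)$ we have $\overline\partial_{\alpha,e_j^*}=\XX^{\beta}\,\partial/\partial x_j$ with $\beta=\alpha+e_j$ and $\beta_j=0$. Hence $\exp(t\overline\partial_{\alpha,e_j^*})$ is $x_j\mapsto x_j+t\XX^\beta$, fixing the other variables.
\item This is an elementary map, and its Jacobian matrix is triangular with ones on the diagonal, so $\jac=1$.
\end{enumerate}
$G^{\jac}$ is closed, being the preimage of the closed set of constants under the morphism $\jac$. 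So $H\subseteq G^{\jac}$ and $\operatorname{Lie}(H)\subseteq\operatorname{Lie}(G^{\jac})$.

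\emph{Step 2: $\operatorname{Lie}(G^{\jac})\subseteq\mathfrak h$.} By \cref{prop:semidirect} and \cref{lem:anick-roots}, $G=U_1\rtimes G_1$ with $G_1\cong\GL_n(\KK)$. $G_1$ acts linearly on the variables, so it lies in $G^{\jac}$, and therefore $G^{\jac}=(U_1\cap G^{\jac})\rtimes G_1$.
\begin{enumerate}[(a)]
\item Take $\overline\partial\in\operatorname{Lie}(U_1\cap G^{\jac})$ and consider the curve $\exp(t\overline\partial)$.
\item By \cref{lem:anick-jac}\,(ii), applied homogeneous component by homogeneous component, $\jac(\exp(t\overline\partial))=1+t\divi(\overline\partial)+O(t^2)$. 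Differentiating at $t=0$ forces $\divi(\overline\partial)$ to be constant.
\item Every component of $\overline\partial$ has $s(\alpha)\geq1$, so by \eqref{eq:divergence} the divergence has no constant term; hence $\divi(\overline\partial)=0$.
\item Again by \eqref{eq:divergence}, each inner component must then be divergence free, while outer components impose nothing.
\end{enumerate}
Combined with $\operatorname{Lie}(G_1)=\mathfrak g_0\oplus\bigoplus_{\Sigma}\mathfrak g_\alpha$, this gives $\operatorname{Lie}(G^{\jac})\subseteq\mathfrak h$.

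\emph{Step 3 (the main obstacle): $\mathfrak h\subseteq\operatorname{Lie}(H)$.} Here one must produce every divergence-free inner derivation from brackets of $\mathfrak t$ with outer root derivations. Two remarks guide the argument. First, divergence-free fields form a Lie subalgebra, since $\divi[X,Y]=X(\divi Y)-Y(\divi X)$. Second, for inner $\gamma$, the divergence-free part of $\mathfrak g_\gamma$ is the hyperplane $\{p(\gamma+\mathbf 1)=0\}$, of dimension $n-1$.
\begin{enumerate}[(a)]
\item \emph{Base case $\gamma=e_i$.} This requires $d\geq3$. Decompositions of type (B) give $2e_k^*-e_i^*$ for every $k\neq i$, since $2e_i-e_k$ is an outer root because $s=2\leq d-1$. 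Decompositions of type (C) give $e_k^*-e_l^*$ for distinct $k,l\neq i$. By \cref{prop:ei-Iouter-search} these lie in $D_{e_i}$. The vectors $2e_k^*-e_i^*$, $k\neq i$, are already $n-1$ independent vectors, so they span $D_{e_i}$.
\item \emph{Inductive step, $s(\gamma)\geq2$.} Argue by induction on $s(\gamma)$, generalizing the type-(A) computation in \cref{prop:ei-Iouter-search}. Bracket outer roots of degree sum $0$ or $1$ against divergence-free elements of $\mathfrak g_\delta$ with $s(\delta)<s(\gamma)$. For instance, take $[\mathfrak g_{e_k-e_j},D_\delta]$ and $[\mathfrak g_{\beta},D_{\delta}]$ with $\beta\in\R_j(\M^d)$, $s(\beta)=1$. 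Compute via \cref{rmk:lie-bracket} that the resulting covectors span the hyperplane $\{r(\gamma+\mathbf 1)=0\}$.
\item If this linear algebra is awkward in general, I would first fall back on decompositions $\gamma=\alpha+\beta$ with both summands outer, mimicking types (B) and (C). These require only outer root derivations, and yield covectors of the form $\beta_je_k^*-\alpha_ke_j^*$. The task is then to check that they span a space of dimension $n-1$.
\end{enumerate}

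\emph{Step 4: conclusion.} $H$ is connected, being generated by connected subgroups, and $\dim H=\dim G^{\jac}$ by Steps 2--3. Moreover, $U_1\cap G^{\jac}$ is a closed subgroup of a unipotent group in characteristic zero, hence connected. Therefore $G^{\jac}=(U_1\cap G^{\jac})\rtimes G_1$ is connected, and $H=G^{\jac}$. Finally, each inner root loses exactly one dimension, so the codimension equals the number of inner roots. By \cref{lem:anick-roots} this is $\#\{\alpha\in\ZZ^n_{\geq0}:1\leq s(\alpha)\leq d-2\}=\binom{n+d-2}{n}-1$.
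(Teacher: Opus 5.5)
Your Steps 1, 2 and 4 are sound. Step 3 is the heart of the proposition, and you only prove it in the base case $\gamma=e_i$. For inner roots with $s(\gamma)\geq2$ you offer two sketches and leave the decisive verification open.

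Route (b) does not work as written, for two reasons.
\begin{enumerate}
\item A bracket $[\mathfrak g_{e_k-e_j},D_\delta]$ lies in degree $\delta+e_k-e_j$, whose degree sum is $s(\delta)<s(\gamma)$. So it never reaches $\mathfrak g_\gamma$.
\item A decomposition $\gamma=\beta+\delta$ with $\beta$ outer of degree sum $1$ and $\delta$ inner need not exist. For $n=2$, $d\geq4$ and $\gamma=e_1+e_2$, the only candidates are $\beta=2e_2-e_1$ and $\beta=2e_1-e_2$. They leave $\delta=2e_1-e_2$ and $\delta=2e_2-e_1$, neither of which is inner.
\end{enumerate}
Route (c) is the right idea, but ``the task is then to check'' is precisely what has to be proved. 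You must show that every inner $\gamma$ admits enough decompositions into two outer roots, and that the resulting covectors span the whole hyperplane $\{r(\gamma+\mathbf 1)=0\}$ rather than a proper subspace of it.

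The missing computation is short: it is your type-(B) base case with the coefficient $2$ replaced by $\gamma_j+1$. Fix $j$ and, for each $k\neq j$, write
\[
\gamma=\bigl(\gamma+e_k-(\gamma_j+1)e_j\bigr)+\bigl((\gamma_j+1)e_j-e_k\bigr).
\]
By \cref{lem:anick-roots}, the first summand lies in $\R_j(\M^d)$ and the second in $\R_k(\M^d)$, since their degree sums $s(\gamma)-\gamma_j$ and $\gamma_j$ both lie in $[0,d-2]$. By \cref{rmk:lie-bracket}, the bracket of the corresponding derivations, with covectors $e_j^*$ and $e_k^*$, is $\overline\partial_{\gamma,r_k}$ with $r_k=(\gamma_j+1)e_k^*-(\gamma_k+1)e_j^*$. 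The $n-1$ covectors $r_k$ are linearly independent, because $e_k^*$ occurs in $r_k$ with coefficient $\gamma_j+1\neq0$ and in no other $r_{k'}$. They all annihilate $\gamma+\mathbf 1$, so they span $D_\gamma$, and no induction on $s(\gamma)$ is needed. This is exactly the paper's argument.

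With that in place your proof is complete. It differs from the paper only in how $G^{\jac}\subseteq H$ is obtained.
\begin{itemize}
\item The paper reduces to $\varphi\in U_1$ and strips off the lowest homogeneous component of $\log\varphi$ step by step, using \cref{lem:anick-jac}\,(ii).
\item You compute $\operatorname{Lie}(G^{\jac})$ via the curves $\exp(t\overline\partial)$. The first-order formula you use there is really the elementary fact that the $t$-linear coefficient of $\det(\mathrm{Id}+tJ+O(t^2))$ is $\operatorname{tr}J$, not \cref{lem:anick-jac}\,(ii), but it is correct. You then conclude from the connectedness of $U_1\cap G^{\jac}$ and the equality $\dim H=\dim G^{\jac}$.
\end{itemize}
Both routes are valid. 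Yours trades the inductive bookkeeping for standard facts about unipotent groups in characteristic zero.
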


\begin{proof}
We first prove $H\subseteq G^{\jac}$. The torus acts by $\overline x_i\mapsto t_i\overline x_i$, so $\jac=t_1\cdots t_n\in\KK^\times$. If $\alpha\in\R_j(\M^d)$ and
$\beta=\alpha+e_j$, then $\partial_{\alpha,e_j^*}=\XX^{\beta}\partial/\partial x_j$ with $\beta_j=0$, so that $\exp(t\,\overline\partial_{\alpha,e_j^*})$ satisfies
\[\overline{x}_j\mapsto \overline{x}_j+t\XX^{\beta},\qquad \overline{x}_i\mapsto \overline{x}_i,\, \text{ for }i\neq j,\]
and hence its Jacobian matrix is unipotent triangular; so $\jac=1$. As $G^{\jac}$ is a subgroup by \cref{lem:anick-jac}\,\textup{(i)}, we get $H\subseteq G^{\jac}$.

Next we compute $\mathfrak h:=\operatorname{Lie}(H)$. From the identity
$\divi([\partial,\partial'])=\partial(\divi \partial')-\partial'(\divi \partial)$, formula \eqref{eq:divergence} and $[\mathfrak g_0,\mathfrak g_\alpha]=\mathfrak g_\alpha$, we deduce that $\divi\overline{\partial}=0$ for every $\overline{\partial}\in\mathfrak h\cap\mathfrak g_\alpha$ with $\alpha\neq 0$. Then, again by \eqref{eq:divergence}, for every inner root $\alpha$ we have
\[\mathfrak h\cap\mathfrak g_\alpha\subseteq D_{\alpha}=\{\overline\partial_{\alpha,p}\mid p(\alpha+\mathbf 1)=0\}=\{\overline{\partial}\in\mathfrak g_\alpha\mid \divi(\overline{\partial})=0\}.\]
We claim that this is an equality. Fix an inner root $\alpha$ and an index $j$, and for
each $i\neq j$ set
$$
\beta_i=\alpha+e_i-(\alpha_j+1)e_j,\qquad \gamma_i=(\alpha_j+1)e_j-e_i.
$$
Then $\beta_i\in\R_j(\M^d)$ and $\gamma_i\in\R_i(\M^d)$: indeed $(\beta_i)_j=-1$ and the
remaining coordinates of $\beta_i$ are $\geq0$, with
$s(\beta_i)=s(\alpha)-\alpha_j\in[0,d-2]$, and likewise $(\gamma_i)_i=-1$,
$(\gamma_i)_j=\alpha_j+1\geq1$ and $s(\gamma_i)=\alpha_j\in[0,d-2]$. By
\cref{rmk:lie-bracket},
$$
\bigl[\overline\partial_{\beta_i,\,e_j^*},\ \overline\partial_{\gamma_i,\,e_i^*}\bigr]
=\overline\partial_{\alpha,\ (\alpha_j+1)e_i^*-(\alpha_i+1)e_j^*}\ \in\ \mathfrak h.
$$
The $n-1$ covectors $(\alpha_j+1)e_i^*-(\alpha_i+1)e_j^*$, for $i\neq j$, are linearly
independent, since each involves $e_i^*$ with the nonzero coefficient $\alpha_j+1$, and
they all annihilate $\alpha+\mathbf 1$. They therefore form a basis of the hyperplane
$\{p\in N_\KK\mid p(\alpha+\mathbf 1)=0\}$, which proves the claim. Together with
$\mathfrak g_0=\mathfrak{gl}_n(\KK)\subseteq\mathfrak h$ and with $\mathfrak g_\alpha\subseteq\mathfrak h$ for
$\alpha$ outer, this gives the announced description of $\mathfrak h$, and the codimension
count follows from \cref{lem:anick-roots}\,\textup{(i)}, since $\divi$ cuts out exactly one
dimension in each $\mathfrak g_\alpha$ with $\alpha$ inner.

Finally we prove $G^{\jac}\subseteq H$. Let $\varphi\in G^{\jac}$. As $\GL_n(\KK)\subseteq H$ and $G=U_1\rtimes G_1$ with $G_1=L\cong\GL_n(\KK)$ by \cref{prop:semidirect} and \cref{lem:anick-roots}\,\textup{(ii)}, we may multiply $\varphi$ by an element of $H$ and assume $\varphi\in U_1$. Then
\[F_\varphi=(x_1+(\text{terms of order }\geq 2),\dots,x_n+(\text{terms of order }\geq 2)),\]
so $\jac(\varphi)=1$ since it is constant. The group $U_1$ is connected and unipotent by \cref{prop:semidirect}, so $\varphi=\exp(\overline{\partial})$ for a unique $\overline{\partial}\in\operatorname{Lie}(U_1)$. We show by descending induction on the
degree $k\geq1$ of the lowest nonzero homogeneous component $\overline{\partial}_k$ of $\overline{\partial}$ that
$\varphi\in H$. When $\overline{\partial}=0$ there is nothing to prove. By
\cref{lem:anick-jac}\,\textup{(ii)} and $\jac(\varphi)=1$ we get $\divi(\overline{\partial}_k)=0$, so
$\overline{\partial}_k\in\mathfrak h$ and $\exp(-\overline{\partial}_k)\in H$. Put $\varphi'=\exp(-\overline{\partial}_k)\varphi$. Then
$\varphi'\in U_1\cap G^{\jac}$, so $\jac(\varphi')=1$ as before. Write $\varphi'=\exp(\overline{\partial}')$ with $\overline{\partial}'\in\operatorname{Lie}(U_1)$. Since $\exp(-\overline{\partial}_k)=\mathrm{Id}-\overline{\partial}_k+(\text{deg}> 2k)$ and $\exp(\overline{\partial})=\mathrm{Id}+\overline{\partial}_k+(\text{deg}>k+1)$, composing gives
\[F_{\varphi'}=(x_1+(\text{terms of order }> k+1),\dots,x_n+(\text{terms of order }> k+1)),\]
so the lowest-degree component of $\overline{\partial}'$ has degree $>k$. By induction $\varphi'\in H$, hence $\varphi\in H$.

\end{proof}

\begin{remark}\label{rmk:anick-jac-needed}
For $d\geq3$ the inclusion $H\subseteq G$ is strict, so the hypothesis on the Jacobian
determinant in \cref{thm:anick} below cannot be dropped. For instance, for $n=2$ and $d=3$
the automorphism of $\KK[x_1,x_2]/\M^3$ given by
$\overline x_1\mapsto\overline x_1+\overline x_1^{\,2}$ and
$\overline x_2\mapsto\overline x_2$ is $\exp(\overline\partial_{e_1,e_1^*})$, and
$\divi(\overline\partial_{e_1,e_1^*})=2x_1\neq0$ by \eqref{eq:divergence}; its Jacobian
determinant is $1+2x_1$, which is not constant. Accordingly, its natural lift
$x_1\mapsto x_1+x_1^2$ is not an automorphism of $\KK[x_1,x_2]$.
\end{remark}

\begin{theorem}[Anick, {\cite[Theorem~1]{Anick83}}]\label{thm:anick}
Let $F=(F_1,\dots,F_n)$ be an endomorphism of $\KK[\XX]$ whose Jacobian determinant
$\det\bigl(\partial F_i/\partial x_j\bigr)$ lies in $\KK^\times$. Then for every $d\geq1$
there exists a tame automorphism $\Phi$ of $\KK[\XX]$ such that
$F_i\equiv\Phi_i\pmod{\M^d}$ for every $i$. In other words, the tame automorphisms are
dense, for the $\M$-adic topology, in the set of endomorphisms with constant nonzero
Jacobian determinant.
\end{theorem}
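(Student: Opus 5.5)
We reduce to the finite-dimensional algebra $A_d=\KK[\XX]/\M^d$ and use \cref{prop:anick-outer}. We also reduce to the case $F(0)=0$. Composing $F$ with the translation $x_i\mapsto x_i-F_i(0)$ changes neither the Jacobian determinant nor tameness, since translations are affine. So we may assume every $F_i\in\M$. For $d=1$ the statement is trivial, so assume $d\geq2$ (and $n\geq2$; for $n=1$ the endomorphism is affine). The first step is to check that $F$ induces an automorphism $\varphi$ of $A_d$. Since $F(\M)\subseteq\M$, we have $F(\M^d)\subseteq\M^d$, so $F$ descends to an endomorphism $\varphi$ of $A_d$. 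Its linear part is the Jacobian matrix at $0$, whose determinant is the constant $\jac(F)\neq0$. Hence $\varphi$ induces an invertible map on $\M/\M^2$. A graded-Nakayama argument then shows $\varphi$ is surjective, hence bijective because $A_d$ is finite-dimensional. So $\varphi\in G$. Moreover the representatives of degree $<d$ of $\varphi(\overline x_i)$ are the truncations of the $F_i$, so $\jac(\varphi)$ is the class of $\det(\partial F_i/\partial x_j)$ modulo $\M^{d-1}$. This is a nonzero constant, so $\varphi\in G^{\jac}$.

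By \cref{prop:anick-outer}, $\varphi$ is therefore a finite product of elements of $T$ and of the outer root subgroups $U_\alpha$. The second step is to lift each factor to a tame automorphism of $\KK[\XX]$:
\begin{itemize}
\item[] An element of $T$ lifts to the diagonal linear automorphism $x_i\mapsto t_ix_i$.
\item[] An element $\exp(t\,\overline\partial_{\alpha,e_j^*})$ with $\alpha\in\R_j(\M^d)$ and $\beta=\alpha+e_j$ (so $\beta_j=0$) acts by $\overline x_j\mapsto\overline x_j+t\overline\XX^{\beta}$, fixing the other variables, as computed in the proof of \cref{prop:anick-outer}. It lifts to the elementary automorphism $x_j\mapsto x_j+t\XX^\beta$, which is tame because $\XX^\beta$ does not involve $x_j$.
\end{itemize}
Let $\Phi$ be the product of these lifts, taken in the same order. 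Reduction modulo $\M^d$ is compatible with composition, since every lift maps $\M$ into $\M$. The lifts themselves need not fix $0$, because the degree-zero parts of $\beta$ are excluded, but composition still commutes with reduction because all maps preserve $\M$. Hence $\Phi$ induces $\varphi$ on $A_d$, which means $\Phi_i\equiv F_i\pmod{\M^d}$ for every $i$. Undoing the initial translation gives the theorem, and density in the $\M$-adic topology is a restatement of this.

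The main obstacle is the compatibility bookkeeping in the two steps. First, we must check carefully that $\jac(\varphi)$ computed in $A_d$ agrees with the honest Jacobian of $F$ modulo $\M^{d-1}$. This requires that truncating $F_i$ in degrees $\geq d$ changes the derivatives only by elements of $\M^{d-1}$, which is exactly the well-definedness remark before \cref{lem:anick-jac}. Second, we must check that the order-reversal conventions for composing automorphisms of $A_d$ and endomorphisms of $\KK[\XX]$ match. I would treat both uniformly by viewing every map as an algebra endomorphism and using functoriality of the quotient $\KK[\XX]\to A_d$. All the substantive content is already contained in \cref{prop:anick-outer}.
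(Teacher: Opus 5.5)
Your proof is correct and follows the paper's argument essentially step for step: translate so that $F(0)=0$, show that the induced $\varphi$ lies in $G^{\jac}$, invoke \cref{prop:anick-outer}, and lift torus elements and outer root subgroup elements to diagonal and elementary automorphisms. The only real difference is that you get bijectivity of $\varphi$ from a Nakayama-type argument on $\M/\M^2$ rather than from the formal inverse function theorem, which works equally well. One small slip should be dropped: your remark that the lifts ``need not fix $0$'' is wrong. The lifts $x_j\mapsto x_j+t\XX^\beta$ do fix the origin, since $s(\beta)\geq1$ forces $\beta\neq0$, and this is exactly why they preserve $\M$.
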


\begin{proof}
For $n=1$ the hypothesis says that $F_1'$ is a nonzero constant, so $F_1$ is affine and
already tame. Assume then $n\geq2$; we may also assume $d\geq2$. Composing with the
translation $x_i\mapsto x_i-F_i(0)$, which is affine and hence tame, we may assume
$F(0)=0$, that is, $F_i\in\M$ for every $i$.

Then $F$ maps $\M^d$ into itself and therefore induces an endomorphism $\varphi$ of
$A_d=\KK[\XX]/\M^d$. Since $\det J_F$ is a nonzero constant, the matrix $J_F(0)$ is
invertible and the formal inverse function theorem \cite[Theorem 1.1.2]{vdE00} provides
$\Psi\in\KK[[\XX]]^n$ with $\Psi(0)=0$ and $F\circ\Psi=\Psi\circ F=\mathrm{id}$. Truncating
$\Psi$ in degrees $<d$ gives an endomorphism of $A_d$ which is a two-sided inverse of
$\varphi$, so $\varphi\in G$.

The tuple $F_\varphi$ is the truncation of $F$ in degrees $<d$, whose Jacobian matrix
differs from $J_F$ by entries in $\M^{d-1}$; hence
$\jac(\varphi)=\det J_F\in\KK^\times$ and $\varphi\in G^{\jac}$. By
\cref{prop:anick-outer}, $\varphi$ is a product of elements of $T$ and of the outer root
subgroups $U_\alpha$, and of their inverses.

Each of these lifts to a tame automorphism of $\KK[\XX]$ fixing the origin: an element of $T$ lifts to the diagonal automorphism $x_i\mapsto t_ix_i$, which is affine; and, as in the proof of \cref{prop:anick-outer}, the element $\exp(t\,\overline\partial_{\alpha,e_j^*})$ of $U_\alpha$, with $\alpha\in\R_j(\M^d)$, lifts to
\[x_j\mapsto x_j+t\XX^{\alpha+e_j},\qquad x_i\mapsto x_i,\,\text{ for } i\neq j,\]
which is elementary because $(\alpha+e_j)_j=0$. Since reduction modulo $\M^d$ is a group homomorphism from the automorphisms of $\KK[\XX]$ fixing the origin to $G$, the corresponding product $\Phi$ of these tame automorphisms satisfies $\overline\Phi=\varphi$, that is, $F_i\equiv\Phi_i\pmod{\M^d}$ for every $i$.

\end{proof}

\bibliographystyle{alpha}
\bibliography{ref}
\end{document}